\documentclass[10pt]{article}
\usepackage{graphicx} 

\usepackage[margin=3.5cm]{geometry}

\usepackage{amsfonts}
\usepackage{amsmath}
\usepackage{amssymb}
\usepackage{amsthm}
\usepackage{algpseudocode}
\usepackage{algorithm}
\usepackage{enumitem}
\usepackage[normalem]{ulem} 
\usepackage{xcolor}
\usepackage{hyperref}
\usepackage{mathtools}
\usepackage{tikz}
\usepackage[T1]{fontenc}

\newcommand{\tr}{\mathrm{tr}}
\newcommand{\Ess}{\mathbf{S}}

\newcommand{\ess}{\mathbf{s}}

\theoremstyle{plain}
\newtheorem{theorem}{Theorem}[section]
\newtheorem{proposition}[theorem]{Proposition}
\newtheorem{lemma}[theorem]{Lemma}
\newtheorem{corollary}[theorem]{Corollary}

\theoremstyle{definition}
\newtheorem{remark}[theorem]{Remark}
\newtheorem{definition}[theorem]{Definition}
\newtheorem{example}[theorem]{Example}

\title{Sketching Outside the Hilbert Space for the Infinite-Dimensional Randomised SVD}
\author{Morgan Lynch-Davies\footnote{Department of Mathematics, The University of Manchester, M13 9PL Manchester, UK, \texttt{morgan.lynch-davies@manchester.ac.uk}} \and Marcus Webb\footnote{Department of Mathematics, The University of Manchester, M13 9PL Manchester, UK}}
\date{October 2026}

\begin{document}

\maketitle

\begin{abstract}
In randomised numerical linear algebra, common choices for distributions of sketch vectors $\omega \in \mathbb R^n$ are isotropic, so that $\mathbb E[\omega \omega^T]=I_n$. Unfortunately, the natural extension of an isotropic vector in $(\mathbb R^n, \|\cdot \|_{2})$ to an infinite-dimensional separable Hilbert space $\mathcal H$ may not exist as an element of the space, posing an obstacle for any attempt to lift the randomised SVD to infinite-dimensional target operators $A$. Current workarounds either forego isotropy and take $\omega$ to be a continuous Gaussian process with trace-class covariance operators, or leave $\omega$ implicit by instead sampling $A\omega$ as if $\omega$ were isotropic Gaussian. We unify their approaches by introducing ``sketch functionals'', an example of cylindrical random variables (Vakhania--Tarieladze--Chobanyan~\cite{vakhania1987probability}) which allow us to treat general, non-Gaussian distributions with bounded covariance operators. To demonstrate the utility of the new theory, we apply it to develop novel bounds for the infinite-dimensional randomised SVD with a general class of sub-Gaussian sketch distributions.
\end{abstract}

\noindent\textbf{2020 Mathematics Subject Classification.} Primary 65F55; Secondary 60B12, 60G20, 68W20, 46N40.

\medskip
\noindent\textbf{Keywords.} Randomised numerical linear algebra, sub-Gaussian processes, cylindrical random variables.

\section{Introduction}

The main contribution of this paper is the development of a framework for infinite-dimensional randomised SVD and Nystr\"om approximation for \emph{general} sketching distributions. This allows us to treat distributions beyond those considered in the literature so far, which has been restricted to the isotropic Gaussian distribution \cite{KRESSNER2026259} and Gaussian processes with trace class covariance operators \cite{boulle2021generalization}\cite{boulle2023learning}. To demonstrate the applicability of this new framework, we apply it to sub-Gaussian random processes, improving upon and extending the theory of Saibaba--Mi{\k{e}}dlar~\cite{saibaba2025randomized} to infinite dimensions.

First, a brief history of the randomised SVD for the uninitiated. In short, it is a lightning fast --- hence popular and well-studied --- randomised algorithm for low rank approximation of large matrices. The basic form of the randomised SVD is given in Algorithm~\ref{alg:hmt}, popularised by Halko, Martinsson, Tropp 2011~\cite{halko2011finding}. The first step is commonly referred to as sketching, and is the main focus of this paper. For a target operator $A$ in $\mathbb R^{m \times n}$, we ``sketch'' $\ell$ random vectors $\{\omega^{(i)}\}_{i \le \ell}$ in $\mathbb R^n$ with $\ell \ll m,n$ and construct an orthonormal basis $Q$ for the subspace spanned by their images $\{A \omega^{(i)}\}_{i=1}^{\ell}$. We then compute the SVD of the small matrix $Q^TA$, and $(Q \tilde U) \tilde \Sigma \tilde V^T$ is an approximate SVD for $A$ of rank at most $\ell$. The dominant computational expenditures are likely in step 2 and step 4, namely applying $A$ a total of $2\ell$ times. This is often a remarkably small cost for the quality of approximation, quantified probabilistically in Theorem~\ref{theorem:hmt_maintheorem}.

\begin{algorithm}

\caption{Randomised SVD: finite dimensions}
\label{alg:hmt}
\textbf{Input:} Target matrix $A \in \mathbb R^{m \times n}$, number of samples $\ell$ 

\textbf{Output:} $\hat A$, rank-$\ell$ approximation to $A$
    \begin{algorithmic}[1]
        \State $\Omega \gets (\omega^{(1)}, \cdots, \omega^{(\ell)})$ where $\omega^{(i)} \overset{iid} \sim \mathcal N(0, I_n)$ are $n$-dimensional random vectors
        \State $Y \gets A \Omega$ \label{alg:hmt:step2}
        \State $Q \gets \text{orth}(Y)$
        \State $\tilde U, \tilde \Sigma, \tilde V \gets \mathrm{svd}(Q^TA)$
        \State $\hat A \gets Q Q^T A=(Q\tilde U) \tilde \Sigma \tilde V^T$\label{alg:hmt:step4}
    \end{algorithmic}
\end{algorithm}

\begin{theorem}[Halko, Martinsson, Tropp~\cite{halko2011finding}]
\label{theorem:hmt_maintheorem}
    Let $A \in \mathbb R^{m \times n}$ be the target operator with singular values $\sigma_1, \dots, \sigma_{\min(m,n)}$, $k \in \mathbb N$ the target rank, and $p \ge 2$ a small oversampling parameter. Let $\hat A$ be the output of Algorithm~
    \ref{alg:hmt} applied with $A$ and $\ell=k+p$.
    Define $\Sigma_\perp=\mathrm{diag}(\sigma_{k+1},\dots,\sigma_{\min(m,n)})$. Then
    \begin{equation}
        \mathbb E[\| A - \hat A \|_{F}] \le \sqrt{ 1+\frac{k}{p-1}}\| \Sigma_\perp\|_{F}.
    \end{equation}
    Further, if $p \ge 4$ then with probability at least $1-5t^{-p}-2 e^{-u^2/2}$,
    \begin{equation}
        \| A - \hat A\|_F \le \left(1+t\sqrt{\frac{12k}{p}}\right) \|\Sigma_\perp \|_F + ut\frac{e \sqrt{k+p}}{p+1}\| \Sigma_\perp\|_2.
    \end{equation}
\end{theorem}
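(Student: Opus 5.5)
The plan follows the standard Halko--Martinsson--Tropp route: a deterministic error bound, then expectations and tail bounds for Gaussian matrices.

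\textbf{Deterministic bound.} Write the SVD $A = U\Sigma V^T$. Partition $\Sigma=\mathrm{diag}(\Sigma_1,\Sigma_\perp)$ with $\Sigma_1\in\mathbb R^{k\times k}$, and $V=(V_1,V_2)$ accordingly. Set $\Omega_1 = V_1^T\Omega \in \mathbb R^{k\times \ell}$ and $\Omega_2=V_2^T\Omega$. Rotation invariance of the standard Gaussian makes $\Omega_1,\Omega_2$ independent with i.i.d.\ $\mathcal N(0,1)$ entries. Since $\ell\ge k+2$, $\Omega_1$ has full row rank almost surely.

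I would then show $\|(I-QQ^T)A\|_F^2 \le \|\Sigma_\perp\|_F^2 + \|\Sigma_\perp\Omega_2\Omega_1^\dagger\|_F^2$. The idea is to compare $A$ with the matrix $Z = Y\Omega_1^\dagger\Sigma_1^{-1}$ (working in the $U$-coordinates), whose columns lie in $\mathrm{range}(Y)$. Then $P_Y\succeq P_Z$, and $\|(I-P_Z)\Sigma\|_F^2$ is computed by an explicit block computation. The key step there is the operator inequality $I - P_Z \preceq \begin{pmatrix} F^TF & \ast\\ \ast & I\end{pmatrix}$-type, with $F=\Sigma_\perp\Omega_2\Omega_1^\dagger\Sigma_1^{-1}$. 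When $\Sigma_1$ is singular, a limiting/perturbation argument handles it.

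\textbf{Expectation.} Conditional on $\Omega_1$, the matrix $\Omega_2$ is Gaussian, so $\mathbb E\|S\Omega_2 T\|_F^2=\|S\|_F^2\|T\|_F^2$. This gives $\mathbb E\|\Sigma_\perp\Omega_2\Omega_1^\dagger\|_F^2=\|\Sigma_\perp\|_F^2\,\mathbb E\|\Omega_1^\dagger\|_F^2$. The identity $\mathbb E\|\Omega_1^\dagger\|_F^2=\mathbb E\,\tr((\Omega_1\Omega_1^T)^{-1}) = k/(p-1)$ follows from the mean of the inverse Wishart distribution. Finally, Jensen's inequality, $\mathbb E[X]\le(\mathbb E X^2)^{1/2}$, applied to the deterministic bound yields the first claim.

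\textbf{Tail bound.} This is the main obstacle, since it needs sharp tail estimates for $\|\Omega_1^\dagger\|_F$ and $\|\Omega_1^\dagger\|_2$. First, use $\sqrt{a^2+b^2}\le a+b$ to get
\[
\|A-\hat A\|_F\le\|\Sigma_\perp\|_F+\|\Sigma_\perp\Omega_2\Omega_1^\dagger\|_F .
\]
Conditional on $\Omega_1$, the map $G\mapsto \|\Sigma_\perp G\Omega_1^\dagger\|_F$ is Lipschitz with constant $\|\Sigma_\perp\|_2\|\Omega_1^\dagger\|_2$. Gaussian concentration then gives, with probability at least $1-e^{-u^2/2}$,
\[
\|\Sigma_\perp\Omega_2\Omega_1^\dagger\|_F\le \|\Sigma_\perp\|_F\|\Omega_1^\dagger\|_F+u\|\Sigma_\perp\|_2\|\Omega_1^\dagger\|_2 .
\]
The remaining ingredients are the pseudoinverse bounds
\[
\mathbb P\bigl(\|\Omega_1^\dagger\|_F\ge t\sqrt{3k/(p+1)}\bigr)\le t^{-p}, \qquad \mathbb P\bigl(\|\Omega_1^\dagger\|_2\ge t\,e\sqrt{k+p}/(p+1)\bigr)\le t^{-(p+1)} .
\]
I would prove these via Markov's inequality on moments of $\tr$ of the inverse Wishart, and via the smallest-singular-value density bound of Chen--Dongarra, respectively. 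Here $p\ge4$ is needed for finite moments of order $p/2$.

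A union bound then combines everything. Noting $\sqrt{3k/(p+1)}\le\sqrt{12k/p}$ (with slack absorbing the constants) and collecting failure probabilities to at most $5t^{-p}+2e^{-u^2/2}$ yields the stated bound.
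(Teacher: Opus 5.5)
Your proposal is correct. The paper gives no proof of this statement; it is quoted from~\cite{halko2011finding}. Your plan faithfully reconstructs the Halko--Martinsson--Tropp argument: the deterministic bound via $Z=Y\Omega_1^\dagger\Sigma_1^{-1}$, conditioning on $\Omega_1$ with $\mathbb E\,\tr((\Omega_1\Omega_1^T)^{-1})=k/(p-1)$, and Lipschitz concentration combined with the two pseudoinverse tail bounds.

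Compared with the paper's own tools, a few points are worth noting.

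\emph{Deterministic step.} Lemma~\ref{lemma:inf_dim_structural_bound} gives a shorter proof. Since $\Omega_1\Omega_1^\dagger=I_k$, you have $(I-QQ^T)A=(I-QQ^T)U_\perp\Sigma_\perp(V_\perp^T-\Omega_2\Omega_1^\dagger V_k^T)$. Dropping the projector and using the orthogonality of the ranges of $V_\perp$ and $V_k$ gives the bound in both norms. This needs no operator inequality for $I-P_Z$ and no invertibility of $\Sigma_1$.

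\emph{Singular $\Sigma_1$.} In your route, settle this case directly rather than by a limit; a naive limit is awkward because $P_Y$ is not continuous in $A$. If $\Sigma_1$ is singular then $\Sigma_\perp=0$, and full row rank of $\Omega_1$ forces $\mathrm{range}(Y)=\mathrm{range}(A)$, so $\hat A=A$.

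\emph{Probabilistic step.} Your conditioning on $\Omega_1$ exploits the independence of $\Omega_1$ and $\Omega_2$. The paper's sub-Gaussian proofs cannot do this and fall back on $\|\Sigma_\perp\Ess_2\Ess_1^\dagger\|\le\|\Sigma_\perp\Ess_2\|\|\Ess_1^\dagger\|$. As a result, your argument proves more than is stated: $\|A-\hat A\|_F\le(1+t\sqrt{3k/(p+1)})\|\Sigma_\perp\|_F+ut\frac{e\sqrt{k+p}}{p+1}\|\Sigma_\perp\|_2$, with failure probability at most $t^{-p}+t^{-(p+1)}+e^{-u^2/2}\le 2t^{-p}+e^{-u^2/2}$ for $t\ge1$, $u\ge0$. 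The statement leaves these ranges implicit. This is exactly the form the paper later quotes as Theorem~\ref{theorem:KPU_maintheorem}. The stated $\sqrt{12k/p}$ and $5t^{-p}+2e^{-u^2/2}$ are simply weaker (since $3/(p+1)\le 12/p$), so no constants need absorbing.

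\emph{A small correction.} $p\ge4$ is not what makes the order-$p/2$ moment of $\tr((\Omega_1\Omega_1^T)^{-1})$ finite. Its diagonal entries are reciprocals of $\chi^2_{p+1}$ variables, whose moments of order $-p/2$ are finite for every $p$. Rather, $p\ge4$ is what lets the Stirling estimate deliver $(\mathbb E\|\Omega_1^\dagger\|_F^{p})^{1/p}\le\sqrt{3k/(p+1)}$.
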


To parse this theorem, keep in mind that $\| \Sigma_\perp\|_F$ is the optimal Frobenius error for a rank-$k$ approximation to $A$. So the statement is that with high probability, the random rank-$\ell$ approximation $\hat A$ is worse than optimal by a small factor. Most error bounds for the randomised SVD take a similar form.

Since publication, there has been steady interest in this algorithm with the original paper being one of the most cited in NLA. One good and well-researched question is what other distributions on $\omega$ work well. The Rademacher distribution takes values $\pm 1$ with equal probability and is a common alternative to Gaussian components due to the simplicity and speed of forming $\Omega$. It is also possible to optimise forming $Y=A\Omega$; modifying distributions by a sparsity parameter and letting the components take the value $\pm q^{-1/2}$ with probability $q<1$ and 0 otherwise is one approach that reduces the computational cost of this step to a fraction $q$ while retaining isotropy. Distributions such as these, and many other common choices, fall into the class of sub-Gaussian random variables which were investigated in the context of the randomised SVD in~\cite{saibaba2025randomized}. In Sections 3 and 4, we will take that sub-Gaussian work further, and generalise to infinite dimensions. 

The sketch distribution is usually chosen to be \textbf{isotropic} (defined below). Allowing more general covariance matrices $C$ can be used to introduce a bias into the sketch vector, which Boull\'{e}--Townsend\cite{boulle2021generalization}\cite{boulle2023learning} exploit to improve performance if there is some prior knowledge of the dominant right singular vectors.

\begin{definition}The covariance matrix $C$ of a centred random vector $\omega = (\omega_1, \dots \omega_n)^T  \in \mathbb R^n$ is defined as $\mathbb E[ \omega \omega^T]$. 
The random vector is said to be isotropic if $C=I_n$, the identity.
\end{definition}

This paper is concerned with infinite-dimensional separable Hilbert spaces. This comes with a problem for isotropy: 
\begin{center}
    \textit{Infinite-dimensional sketch vectors $\omega$ are not necessarily elements of $\mathcal H$}.
\end{center}

We refer to this problem as the White Noise Problem. For the purposes of this paper, we define white noise to be an infinite-dimensional, centred, isotropic random variable.

\begin{proposition}[The White Noise Problem~\cite{hida2013white}] \label{prop:whitenoiseproblem} Let $\mathcal H$ be a real infinite-dimensional separable Hilbert space, and let $\omega$ be a random vector in $\mathcal H$ so that $\mathbb E[\omega \omega^T]= I_{\mathcal H}$. Then $\mathbb E[\| \omega\|^2]=\infty$.
    \begin{proof}
    We can explicitly calculate the expectation.
        \begin{align*}
            \mathbb E[\|\omega\|^2] &= \mathbb E[\omega^T \omega] \\
            &= \mathbb E[\tr( \omega^T \omega)]\\
            &= \mathbb E[ \tr( \omega \omega^T)]\\
            &= \tr\big( \mathbb E[ \omega \omega^T]\big) = \tr(I_{\mathcal H}) = \mathrm{dim}(\mathcal H) = \infty\text{,}  
        \end{align*}
    with the infinite-dimensional trace as given in Definition~\ref{def:linear_map_stuff}.
    \end{proof}
\end{proposition}

There are examples of isotropic distributions where, while $\mathbb{E}[\|\omega\|^2] = \infty$, one has $\| \omega\| < \infty$ almost surely (see Example~\ref{example:asfinite}), but if one samples i.i.d. components of an orthogonal expansion then $\|\omega\| = \infty$ almost surely.

Notice that the proof of Proposition~\ref{prop:whitenoiseproblem} implies that $\mathbb E[\|\omega\|^2]$ coincides with the trace of the covariance operator, and this is what Boull\'{e}--Townsend~\cite{boulle2021generalization}\cite{boulle2023learning} leveraged for the first infinite-dimensional randomised SVD. They derived bounds for when sketch vectors $\omega$ are taken to be Gaussian processes with \emph{trace-class} covariance operators $C$, although their bounds are sub-optimal compared with well-established theory for the isotropic case. Nonetheless, allowing general covariance operators has genuine benefit; if it is possible to choose $C$ so it ``aligns well'' with your target operator, the approximation is improved.

More recently Kressner--Persson--Uschmajew~\cite{KRESSNER2026259} made use of the fact that if $\omega$ is a Gaussian process with covariance operator $C$ and $A$ is a linear map, then $A \omega$ is a Gaussian process with covariance operator $A C A^T$. If $C=I$ and $A$ is a Hilbert--Schmidt operator, $ACA^T$ is trace class, so $A\omega$ lies in the Hilbert space with finite expected norm. Skipping directly to step 2 in Algorithm~\ref{alg:hmt} by sampling columns of $A \Omega$ instead of $\Omega$, they matched known finite-dimensional bounds almost exactly. 

In the next section, we detail a class of ``sketch functionals'' which encompasses both of these approaches, treating directly the $\omega$ left implicit by~\cite{KRESSNER2026259}. In the isotropic Gaussian case their bounds are recovered, and we also extend the results of~\cite{boulle2021generalization} to bounded covariance operators. In Section 3, we further exhibit the utility of sketch functionals to give the first results for the infinite-dimensional sub-Gaussian randomised SVD. We compare these bounds with numerical experiments in Section 4 making use of Chebfun~\cite{driscoll2014chebfun}, and conclude in Section 5. We will often use the notation of quasimatrices (Townsend--Trefethen~\cite{townsend2015continuous}), for example with $x,y \in \mathcal H$ we may write $x^Ty=\langle x,y\rangle_{\mathcal H}$ and $x y^T : \mathcal{H} \to \mathcal{H}$ is a linear operator mapping $w \in \mathcal{H}$ to $\langle y, w\rangle_{\mathcal{H}}x$. Over the course of this paper we work in a probability space $(\mathcal G, \mathcal F, \mathbb P)$.
 
\section{Infinite-Dimensional Randomised NLA with Sketch Functionals}

The central object of the paper is the sketch functional, a notion under which the approaches of Kressner et al.~\cite{KRESSNER2026259} and Boull\'{e}--Townsend~\cite{boulle2021generalization}\cite{boulle2023learning} can be viewed. Sketch functionals are a special case of cylindrical random variables, a classical object in probability on Banach spaces~\cite{vakhania1987probability}. This section will define sketch functionals and collect useful properties, many of which are classical. We will then outline the infinite-dimensional randomised SVD with Gaussian sketch in this language for intuition.

We recall the notion of a random embedding for motivation. The phrase is often used in randomised NLA, and indeed Balabanov--Nouy--Pasco~\cite{balabanov2021preconditioners} have already applied it in the infinite-dimensional case. A random embedding randomly maps an element of a high-dimensional space to an element of a lower-dimensional space while approximately preserving Euclidean geometry. That is, a random embedding $\Psi:\mathcal H \to \mathbb R^{\ell}$ should be such that $\langle x,y \rangle_{\mathcal H} \approx \langle \Psi x, \Psi y\rangle_{\mathbb R^\ell}$.

For a high-dimensional space $\mathcal H=\mathbb R^N$, suppose $\omega$ is an isotropic random vector. Then
$$
\mathbb E[(x^T\omega)(\omega^Ty)]=x^T\mathbb E[\omega \omega^T]y= x^Ty\text{.}
$$

Suppose now $\Omega = (\omega^{(1)}, \omega^{(2)},\cdots,\omega^{(\ell)})$ is a matrix with i.i.d. isotropic columns. If $\Psi = \Omega^T/\sqrt{\ell}$, then
\begin{align*}
\langle \Psi x, \Psi y\rangle_{\mathbb R^\ell} &= \frac{1}{\ell} x^T \Omega \Omega^Ty \\
&= \frac{1}{\ell} \sum_{i=1}^{\ell} x^T \omega^{(i)}{\omega^{(i)}}^Ty,
\end{align*}
which is now subject to the strong law of large numbers, and converges to $\langle x,y\rangle_{\mathcal H}$ almost surely as $\ell \to \infty$. Thus, this $\Psi$ is a sensible choice of random embedding.

The crucial part of this argument was that $\mathbb E[(x^T\omega)(\omega^Ty)]= x^Ty$, equivalently written as $\mathbb E[(x^T\omega)^2]= \|x\|_\mathcal H^2$. Notice that we do not have to deal with $\omega$ directly as an element of $\mathcal H$ here, we only need to ``test'' it against $x \in \mathcal H$ by taking an inner product. It is exactly the uniform behaviour of this inner product over $x\in\mathcal H$ from which the quality of $\Psi$ as a random embedding follows.

Our idea then is rather than to require sketch vectors $\omega \in \mathcal H$, we consider inner products of $\omega$ with test vectors $x \in \mathcal H$ and ask that $\langle \omega, x \rangle$ has finite second moments. This is exactly the motivation behind cylindrical random variables~\cite{vakhania1987probability}.

\subsection{Definitions and Concepts from Functional Analysis}

We will use the following concepts from functional analysis, which can be found in~\cite{brezis2011functional}. Let $\mathcal H, \mathcal K$ be real separable Hilbert spaces.

\begin{definition}
A real vector space $\mathcal H$ is a Hilbert space if it is complete and has an inner product $\langle \cdot,\cdot\rangle_{\mathcal H}$ which is symmetric, bilinear and positive definite. Its norm is given by $\|x\|^2_{\mathcal H}=\langle x, x\rangle_{\mathcal H}$. The unit ball $B_{\mathcal H}$ is defined as the set of all elements of $\mathcal H$ with norm at most $1$.
\end{definition}

\begin{definition}\label{def:linear_map_stuff}
Let $A:\mathcal H \to \mathcal K$ be a linear operator. 
\begin{enumerate}
    \item We say $A$ is bounded and write $A \in \mathcal B(\mathcal H, \mathcal K)$ if
\begin{equation}
    \|A\|_{\mathcal B(\mathcal H, \mathcal K)} = \sup_{h \in B_{\mathcal H}} \|Ah\|_{\mathcal K} < \infty.
\end{equation}
\item Suppose $A \in \mathcal B(\mathcal H, \mathcal H)$. We say $A$ is symmetric positive semi-definite or SPSD if $\langle x, Ay\rangle_{\mathcal H} = \langle Ax, y \rangle_{\mathcal H}$ and $\langle x, Ax\rangle_{\mathcal H} \ge 0$ for all $x,y\in \mathcal H$. 
\item Let $A$ be SPSD and let $\{ p_i\}$ be any orthonormal basis for $\mathcal H$. Define the trace of $A$ by $\tr(A)= \sum \langle Ap_i, p_i\rangle$. Say $A$ is trace-class and write $A \in \mathrm{TC}(\mathcal H)$ if $\tr(A)<\infty$.
\item Let $A\in \mathcal B(\mathcal H, \mathcal K)$. Say $A$ is Hilbert--Schmidt and write $A\in \mathrm{HS}(\mathcal H, \mathcal K)$ if $A^TA \in \mathrm{TC}(\mathcal H)$. In this case, define $\|A\|_{HS}^2 = \tr(A^TA)$. Note this coincides with the Frobenius norm in finite dimensions.
\item Suppose $A \in \mathcal B(\mathcal H, \mathcal K)$. Say $A$ is compact if the closure of $A B_\mathcal H$ is compact in $\mathcal K$.
\end{enumerate}
\end{definition}

Items 3, 4 and 5 are all notions of decay on $A$, or specifically decay on sequences $Ap_i$ where $\{p_i\}_{i \in \mathbb N}$ is an orthonormal set. We remark that they are written in decreasing order of strength; trace-class operators are Hilbert--Schmidt operators which are compact operators.

One core result of functional analysis is the spectral theorem for compact self-adjoint operators.

\begin{theorem}[{\cite{fabian2001functional}}]
    Let $A$ be a compact self-adjoint operator on $\mathcal H$. Then $A$ has an eigendecomposition
    $A = \sum \lambda_i u_i u_i^T$ with $u_i$ orthonormal and $\lambda_i \in \mathbb R$, and the only possible limit point of $\{\lambda_i\}$ is $0$.
\end{theorem}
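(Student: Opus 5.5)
The proof I would give is the classical one for the spectral theorem for compact self-adjoint operators on a real separable Hilbert space. The eigenvectors are extracted one at a time by maximising the Rayleigh quotient, and compactness is what makes the maximiser exist.

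\textbf{Step 1 (a norm-attaining eigenvector).} I first show that for compact self-adjoint $A \neq 0$, one of $\pm\|A\|$ is an eigenvalue. The standard identity $\|A\| = \sup_{\|x\|\le 1} |\langle Ax, x\rangle|$ for self-adjoint $A$ comes from polarisation:
\[
4\langle Ax,y\rangle = \langle A(x+y),x+y\rangle - \langle A(x-y),x-y\rangle,
\]
combined with the parallelogram law. Next I take unit vectors $x_n$ with $\langle Ax_n,x_n\rangle \to \lambda$, where $|\lambda| = \|A\|$. Expanding
\[
\|Ax_n - \lambda x_n\|^2 = \|Ax_n\|^2 - 2\lambda\langle Ax_n,x_n\rangle + \lambda^2 \le 2\lambda^2 - 2\lambda\langle Ax_n,x_n\rangle \to 0 .
\]
By compactness a subsequence satisfies $Ax_n \to y$. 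Then $\lambda x_n \to y$, so $y \neq 0$ (since $\|y\| = |\lambda|$) and $Ay = \lambda y$. Normalising $y$ gives the eigenvector.

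\textbf{Step 2 (iteration).} I set $u_1$ to this eigenvector with eigenvalue $\lambda_1$. The orthogonal complement $\{u_1\}^\perp$ is invariant under $A$ by self-adjointness: if $x \perp u_1$ then $\langle Ax, u_1\rangle = \lambda_1\langle x, u_1\rangle = 0$. The restriction of $A$ to this closed subspace is again compact and self-adjoint, so I repeat Step 1 there. This produces orthonormal $u_1, u_2, \dots$ with $|\lambda_1| \ge |\lambda_2| \ge \cdots$. The process stops only if the restriction becomes $0$, i.e.\ $A$ has finite rank.

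\textbf{Step 3 (the only limit point is $0$).} This is the step where compactness is used a second time, and I expect it to be the main obstacle. Suppose $|\lambda_i| \ge \varepsilon > 0$ for infinitely many $i$. Since $\{u_i\}$ is bounded, compactness gives a Cauchy subsequence of $\{Au_i\} = \{\lambda_i u_i\}$. But orthonormality gives
\[
\|\lambda_i u_i - \lambda_j u_j\|^2 = \lambda_i^2 + \lambda_j^2 \ge 2\varepsilon^2
\]
for $i \neq j$, a contradiction. Hence $\lambda_i \to 0$, and in particular $0$ is the only possible limit point.

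\textbf{Step 4 (the expansion).} To identify $A$ with $\sum \lambda_i u_i u_i^T$, I let $P_n$ be the orthogonal projection onto $\{u_1,\dots,u_n\}^\perp$. Then
\[
A - \sum_{i\le n}\lambda_i u_i u_i^T = A P_n .
\]
By the maximality construction in Step 1, $\|AP_n\| = |\lambda_{n+1}|$, which tends to $0$ by Step 3. So the series converges to $A$ in operator norm.

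On the closed span of the $u_i$, the operator acts as claimed. On the orthogonal complement of that span, $A$ vanishes: the restriction there has norm at most $|\lambda_n|$ for every $n$, hence norm $0$. This means adding an orthonormal basis of $\ker A$ (with eigenvalue $0$) completes the decomposition if desired. Separability is not really needed for the argument.
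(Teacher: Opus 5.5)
Your proof is correct. It is the classical variational argument: obtain an eigenvalue $\pm\|A\|$ by maximising $|\langle Ax,x\rangle|$, deflate onto the invariant orthogonal complement, use compactness to force $\lambda_i\to 0$, and conclude that the partial sums converge to $A$ in operator norm. The paper gives no proof of its own and only cites the result from \cite{fabian2001functional}, so your argument is essentially the standard textbook proof that the paper relies on.
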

While compactness is a sufficient condition for diagonalisability, we remark it isn't necessary (for example, $I_{\mathcal H}$ is not compact but is already diagonalised), and the theory discussed in this paper applies also to non-compact self-adjoint operators that have eigendecompositions.

When $A$ has an eigendecomposition, the formula for the trace given before is simply the sum of the eigenvalues, and so trace-class operators are precisely operators whose eigenvalues form an $\ell^1$ sequence. Similarly with Hilbert--Schmidt operators, we have
\begin{theorem}[{\cite{fabian2001functional}}]
    Let $A:\mathcal H \to \mathcal K$ be a Hilbert--Schmidt operator. Then $A$ has singular value decomposition $A = U \Sigma V^T$ where $U:\ell^2(\mathbb R)\to \mathcal K$, $V:\ell^2(\mathbb R) \to \mathcal H$ are isometries and $\Sigma = \mathrm{diag}(\sigma_1, \sigma_2, \dots)$ has $(\sigma_i)\in \ell^2(\mathbb R)$. 
\end{theorem}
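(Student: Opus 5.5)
The plan is to reduce to the spectral theorem for compact self-adjoint operators, stated just above, by applying it to $A^TA$. Since $A$ is Hilbert--Schmidt, $A^TA$ is SPSD and trace-class, hence compact and self-adjoint. The spectral theorem then gives $A^TA=\sum_i \lambda_i v_iv_i^T$ with $\{v_i\}$ orthonormal in $\mathcal H$. Positive semi-definiteness gives $\lambda_i=\langle v_i,A^TAv_i\rangle=\|Av_i\|^2\ge 0$, and we order them so that $\lambda_1\ge\lambda_2\ge\cdots$. We set $\sigma_i=\sqrt{\lambda_i}$. Then $\sum\sigma_i^2=\tr(A^TA)=\|A\|_{HS}^2<\infty$, since the trace is basis-independent and can be computed in a basis extending $\{v_i\}$. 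Hence $(\sigma_i)\in\ell^2(\mathbb R)$.

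Next I construct the left singular vectors. For each $i$ with $\sigma_i>0$, set $u_i=Av_i/\sigma_i$. Orthonormality follows from $\langle Av_i,Av_j\rangle=\langle v_i,A^TAv_j\rangle=\lambda_j\delta_{ij}$. The maps $V:\ell^2\to\mathcal H$, $e_i\mapsto v_i$, and $U:\ell^2\to\mathcal K$, $e_i\mapsto u_i$, are isometries because they send the standard basis to orthonormal sets.

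To verify $A=U\Sigma V^T$, take $x\in\mathcal H$ and decompose $x=\sum_i\langle v_i,x\rangle v_i+x_0$, where $x_0$ is orthogonal to every $v_i$ with $\sigma_i>0$. Any component of $x_0$ lies in $\ker A^TA$, and $\ker(A^TA)=\ker A$ because $\|Ax\|^2=\langle x,A^TAx\rangle$. So $Ax_0=0$, which gives
\[
Ax=\sum_i\langle v_i,x\rangle Av_i=\sum_i\sigma_i\langle v_i,x\rangle u_i=U\Sigma V^Tx.
\]
The interchange of $A$ with the infinite sum is justified by boundedness of $A$ and convergence of the expansion in $\mathcal H$.

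The main obstacle is bookkeeping rather than depth. The index set may be finite, zero singular values have to be handled, and $U$ and $V$ must be defined on all of $\ell^2$ as isometries. When there are only finitely many nonzero $\sigma_i$ (or $\ker A$ is large), I will index only the nonzero singular values. If isometries on all of $\ell^2$ are required and $\mathcal H$ or $\mathcal K$ permits it, I pad with extra orthonormal vectors and zero singular values. Since $\Sigma$ annihilates the padded coordinates, this does not affect the identity.
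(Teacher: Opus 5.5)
The paper does not prove this statement; it only cites \cite{fabian2001functional}. Your argument is the standard proof and it is correct: apply the spectral theorem to the compact SPSD operator $A^TA$, set $\sigma_i=\sqrt{\lambda_i}$ and $u_i=Av_i/\sigma_i$, and use $\ker A^TA=\ker A$ to dispose of the orthogonal complement.

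Two bookkeeping points are worth stating explicitly. First, the decreasing ordering, and the finite multiplicity of each nonzero $\lambda_i$ that it requires, both follow from $\sum_i\lambda_i=\tr(A^TA)<\infty$. Second, in the finite-rank case, padding $U$ and $V$ to isometries on all of $\ell^2(\mathbb R)$ needs both $\mathcal H$ and $\mathcal K$ to be infinite-dimensional, not just one of them as your ``or'' suggests. This is the paper's standing assumption.
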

Thus, Hilbert--Schmidt operators are precisely operators whose singular values form an $\ell^2$ sequence.

\subsection{Sketch Functionals}

We are now ready to define the space of sketch functionals.

\begin{definition}[Centred Sketch Functional]\label{def:sketchfunctional}
    Let $\mathcal H$ be a Hilbert space, and $L^2(\mathbb P)$ be the space of $\mathbb P$ random variables with finite second moments. The space of bounded centred sketch functionals on $\mathcal H$ is defined as
    \begin{align*}
        \mathcal H^{\#}=\bigg\{ \ess \in \mathcal B\big(\mathcal H, L^2(\mathbb P)\big) \text { such that } \mathbb E[\ess(x)]=0 \text { for all } x \in \mathcal H  \bigg\}. 
    \end{align*}
    Equivalently, $\mathcal H^\# = \mathcal B(\mathcal H, L^2_0(\mathbb P))$ where $L^2_0(\mathbb P) = \{X \in L^2(\mathbb P): \mathbb E[X]=0\}$.
\end{definition}

\begin{remark}
    After developing this theory, we discovered that the concept of sketch functionals is closely related to the theory of cylindrical random variables, discussed with more detail in Vakhania--Tarieladze--Chobanyan~\cite{vakhania1987probability}. In fact the space $\mathcal H^{\#}$ is exactly the space of centred cylindrical random variables with finite weak second moments~\cite[\S VI.3.2 and \S VI.5.1]{vakhania1987probability}. We retain our original development of theory as we expect that it will be clearer to the randomised numerical linear algebra community.
\end{remark}

\begin{definition}[Covariance Operator, cf.~{\cite[\S III.2]{vakhania1987probability}}]
\label{def:covariance_operator}
    A symmetric positive semi-definite operator $C\in\mathcal B(\mathcal H)$ is said to be the covariance operator of $\ess \in \mathcal H^\#$ if for all $x,y \in \mathcal H$, $\mathbb E[\ess(x)\ess(y)]=\langle x,Cy\rangle_\mathcal H=x^TCy$.
\end{definition}

\begin{remark}\label{remark:covariance_polarisation}
    Due to the following polarisation argument, to check the covariance operator of $\ess$ is $C$ it is enough to check that $\mathbb E[\ess(x)^2]=x^TCx$. If $\mathbb E[\ess(x)^2]=x^TCx$ for a SPSD operator $C$,
    \begin{align*}
        2\mathbb E[\ess(x)\ess(y)] &= \mathbb E[\ess(x)^2 + 2\ess(x)\ess(y) + \ess(y)^2] - \mathbb E[\ess(x)^2]-\mathbb E[\ess(y)^2]\\
        &= \mathbb E[(\ess(x)+\ess(y))^2] - \mathbb E[\ess(x)^2]-\mathbb E[\ess(y)^2]\\
        &= \mathbb E[\ess(x+y)^2] - \mathbb E[\ess(x)^2]-\mathbb E[\ess(y)^2]\\
        &= (x+y)^TC(x+y) - x^TCx- y^TCy\\
        &= 2x^TCy
    \end{align*}
\end{remark}

\begin{proposition}\label{prop:properties_of_gensketch}
    Let $\ess \in \mathcal H^\#$. Then
    \begin{enumerate}
        \item {\cite[Prop. III.2.1]{vakhania1987probability}} $\ess$ has a SPSD covariance operator $C \in \mathcal B(\mathcal H)$ satisfying $\mathbb E[\ess(x)\ess(y)]=x^TCy$ and $\|C\|_2= \|\ess\|_{\mathcal B(\mathcal H,L^2)}^2$.
        \item (Karhunen--Lo\`eve Property) $\ess$ has covariance operator $C$ with orthonormal eigendecomposition $C = \sum_i \lambda_i p_i p_i^T \iff$ there exists a random sequence of centred $\ess_i$ satisfying $\mathbb E[\ess_i \ess_j]=\delta_{ij}$ such that for any $x \in \mathcal H$, $\ess(x)=\sum\lambda_i^{1/2} \ess_i \langle x,p_i \rangle$ with the convergence of the infinite sum in $L^2(\mathbb P)$. 

    \end{enumerate}
    \begin{proof}\underline{Proof of Claim 1.}\\
        Consider the adjoint of $\ess \in \mathcal B(\mathcal H,L^2(\mathbb P))$, by definition a map $\ess^* \in \mathcal B(L^2(\mathbb P),\mathcal H)$ such that for any $Y \in L^2(\mathbb P)$,
        $$
            \mathbb E[\ess(x)Y]=\langle \ess(x),Y \rangle_{L^2(\mathbb P)}= \langle x, \ess^*(Y)\rangle_{\mathcal H},
        $$
        Let $C= \ess^* \ess$. Then for $x,y\in\mathcal H$,
        $$
            \mathbb E[\ess(x)\ess(y)] = \langle x, \ess^*\ess(y)\rangle_{\mathcal H} = x^T (\ess^*\ess) y
        $$
        and $C$ is clearly bounded and SPSD by its definition. For the second part of the claim, we have that
        \begin{align*} \|\ess\|^2_{\mathcal B(\mathcal H,L^2)}&= \sup_{\|x\|_{\mathcal H}=1} \|\ess(x) \|^2_{L^2(\mathbb P)}\\ &=\sup_{\|x\|_{\mathcal H}=1}\mathbb E[\ess(x)^2] = \sup_{\|x\|_{\mathcal H}=1} x^T Cx=\|C\|_2.
        \end{align*}
            \underline{Proof of Claim 2.}\\
    For any $x \in \mathcal H$ and any choice of orthonormal basis $p_i$,
    \begin{align*}
    \mathbb E\left[\left|\ess(x)-\sum_{i\le n} \ess(p_i) \langle x,p_i\rangle\right|^2\right]&= \mathbb E\left[\left|\ess(x - \sum_{i \le n} p_i \langle x,p_i\rangle)\right|^2\right]\\
    &=  \left(x - \sum_{i \le n} p_i \langle x,p_i\rangle\right)^TC\left(x - \sum_{i \le n} p_i \langle x,p_i\rangle\right) \\
    & \le \|C\|_2 \bigg\|x - \sum_{i \le n} p_i \langle x,p_i\rangle\bigg\|^2_\mathcal H
    \end{align*}
    and since $\{p_i\}$ is an orthonormal basis for $\mathcal H$, we have
    $$
    \bigg\|x - \sum_{i \le n} p_i \langle x,p_i\rangle\bigg\|^2_\mathcal H \to 0
    \text{.}$$
    So,
    $$
    \ess(x)\overset{L^2(\mathbb P)}{=}\sum_i \ess(p_i) \langle x,p_i \rangle \text{,}
    $$

    Now, if $\ess$ has covariance operator $C$, then
        \begin{equation*}
            \sum_i \ess(p_i) \langle x,p_i \rangle= \sum_i \lambda_i^{1/2} \frac{\ess(p_i)}{\lambda_i^{1/2}} \langle  x,p_i \rangle
        \end{equation*}
        and we note that $\ess_i = \ess(p_i)/\lambda_i^{1/2}$ are uncorrelated random variables with unit variance since $\mathbb E[\ess(p_i) \ess(p_j)] = p_i^T C p_j = \lambda_i \delta_{ij}$. \\
        If $\lambda_i=0$, we can take $\ess_i$ to be \emph{any} uncorrelated random variable with unit variance, and since we multiply it by $\lambda_i^{1/2}=0$ it has no effect.
        Conversely assuming the expansion form and making use of Remark~\ref{remark:covariance_polarisation},
        \begin{align*}
            \mathbb E[\ess(x)^2] &= \mathbb E\left [ \left(\sum_i \lambda_i \ess_i^2 \langle x,p_i \rangle^2 \right) + \left( \sum_{i \ne j} \lambda_i^{1/2} \lambda_j^{1/2} \ess_i \ess_j \langle x,p_i \rangle \langle x,p_j \rangle\right)\right ]\\
            &= \mathbb E\left[ \sum_i \lambda_i \ess_i^2 \langle x, p_i \rangle^2 \right] \\
            &= \sum_i \lambda_i \langle x,p_i\rangle^2 = x^TC x.
        \end{align*}
    \end{proof}
\end{proposition}

\begin{remark}\label{remark:interpretations}
    We collect some remarks on notation. For $x \in \mathcal H$, $\ess \in \mathcal H^\#$:
    \begin{itemize}
        \item $\ess(x)$ is a random variable with finite second moments. It is to be interpreted as the $\mathcal H$ inner product of the random vector $\ess$ and the vector $x$. Sometimes we might use quasimatrix notation over $\mathcal H$ so that $\ess(x)=\ess^Tx=x^T\ess$. For this reason, we try to avoid using $\ess x$ which would be the usual notation for a linear map.
        \item $\ess^*$ is the adjoint of $\ess$ as a linear map $\mathcal H\to L^2(\mathbb P)$, which acts on elements of $L^2(\mathbb P)$ with range in $\mathcal H$. By definition, for a random variable $Y\in L^2(\mathbb P)$, $\ess^*(Y)=\tilde y$ has $\langle x, \tilde y\rangle_{\mathcal H}=\mathbb E[\ess(x)Y]$. $\ess^*Y$ can be interpreted as the covariance vector between the random vector $\ess$ and the random variable $Y$.
    \end{itemize}
\end{remark}

\begin{definition}[Isotropy]\label{def:isotropy}
    A sketch functional $\ess\in\mathcal H^\#$ is said to be isotropic if its covariance operator is the identity $I_\mathcal H$.
\end{definition}

\begin{example}[Standard Gaussian]
    Let $p_i$ be an orthonormal basis for $\mathcal H$, and sample $g_i\overset{iid}\sim \mathcal N(0,1)$. Then $\ess(x) = \sum g_i \langle x,p_i\rangle$ is a centred isotropic sketch functional with the property that $\ess(x) \sim \mathcal N(0,\|x\|^2_{\mathcal H})$ for all $x \in \mathcal H$. This is referred to as an isonormal Gaussian process in probability literature, for example Nualart~\cite{nualart2006malliavin}.
\end{example}

Proposition~\ref{prop:whitenoiseproblem} shows that any isotropic sketch functional has infinite expected square $\mathcal H$-norm. Due to the following example, the stronger statement that isotropic sketch functionals have infinite $\mathcal H$-norm almost surely is false.

\begin{example}[Almost-Surely Finite Isotropic Sketch {\cite[Theorem III.2.2 and Cor. 2]{vakhania1987probability}}]\label{example:asfinite}
Let $p_i$ be an orthonormal basis for $\mathcal H$ a separable Hilbert space. Let $\ess$ take a value in the direction of $p_i$ with probability $2^{-i}$. To ensure that $\ess$ is centred, it takes the value $c_ip_i$ and $-c_ip_i$ both with probability $2^{-(i+1)}$. If we can choose $c_i<\infty$ to make the sketch isotropic, we are done. $\mathbb E[\langle \ess,p_i\rangle \langle \ess,p_j\rangle]=0$ for $i \ne j$ by construction, as one of the factors will be zero, so we only need that $\mathbb E[\langle \ess, p_i\rangle ^2]=1$ for all $i$. $\mathbb E[\langle \ess,p_i\rangle^2] = 2^{-i}c_i^2$, so if $c_i = 2^{i/2}$ we are done.
\end{example}

\begin{definition}[Images of Sketch Functionals]\label{def:product}
   Let $\ess$ be a sketch functional on $\mathcal{H}$. For a bounded operator $A \in \mathcal{B}(\mathcal{H},\mathcal{K})$, where $\mathcal{K}$ is another separable Hilbert space, define the product $A\ess$ as the sketch functional defined by $A\ess(x)=\ess(A^Tx)$.
\end{definition}

As noted in Remark~\ref{remark:interpretations}, we want $\ess(x)$ to act like an inner product between $\ess$ and $x$. Definition~\ref{def:product} was informed by the equality $\langle A\ess, x\rangle = \langle \ess, A^Tx\rangle$.

\begin{proposition}[{\cite[\S III.2.1, property (e)]{vakhania1987probability}}] \label{prop:product_cov}
    If $\ess \in \mathcal H^\#$ has covariance operator $C$, then $A\ess$ has covariance operator $ACA^T$.
    \begin{proof}
        By the proof of Proposition~\ref{prop:properties_of_gensketch}, the covariance operator of $A\ess$ is $(A\ess)^*(A\ess)$. Since $A\ess(x)=\ess (A^Tx)$, as operators we can write $A\ess = \ess A^T$ and
        $$
        (A\ess)^*(A\ess)=(\ess A^T)^* (\ess A^T)= A \ess^* \ess A^T = ACA^T. 
        $$
    \end{proof}
\end{proposition}

In the case of~\cite{boulle2021generalization}, the covariance operator $C$ is trace class. The following proposition is the reason this approach circumvents the white noise problem.
\begin{proposition}[{\cite[Prop. II.2.5]{vakhania1987probability}}]\label{prop:traceclass}
If $C$ is trace class, so $(\lambda_i)_i \in \ell^1$, then $\ess$ exists as an element of $\mathcal H$ almost surely. That is, there is an $\mathcal H$-valued $\hat \ess$ with $\mathbb E [\| \hat \ess\|^2_{\mathcal H}]= \tr(C)$ such that for each $x$, $\ess(x)= \langle \hat \ess, x\rangle$ almost surely.
    \begin{proof}
        In this case, $\ess(x) = \sum_i \lambda_i^{1/2} \ess_i\langle x,p_i \rangle$ arises as the inner product of $x$ with $\sum_i \lambda_i^{1/2} \ess_i p_i$. This has finite expected norm:
        \begin{align*}
            \mathbb E\left [\left \| \sum_i \lambda_i^{1/2} \ess_i p_i\right \|_{\mathcal{H}}^2\right] &= \sum_i \lambda_i \mathbb E[\ess_i^2] = \sum_i  \lambda_i <\infty
        \end{align*}
    and so exists as an element of $\mathcal H$ (has finite $\mathcal H$ norm) almost surely.
    \end{proof}
\end{proposition}

Kressner et al.~\cite{KRESSNER2026259} directly tackle the white noise problem for Gaussian processes; the following demonstrates that it generalises to the sketch functionals of this work.
\begin{corollary}
    If $A \in \text{HS}(\mathcal H,\mathcal K)$, then $A \ess$ exists as an element of $\mathcal K$ almost surely.
    \begin{proof}
        According to Proposition~\ref{prop:product_cov} the covariance operator is $ACA^T$, which is a product of two $HS$ operators ($AC$ and $A^T$) and hence trace class.
    \end{proof}
\end{corollary}

\begin{definition}[Sketch Functional Quasimatrices]\label{definition:generalised_sketch_matrices}
    Call a collection of $k$ sketch functionals on $\mathcal{H}$, $\Ess = \big ( \ess_1, \ess_2, \dots, \ess_k\big )$ a sketch functional quasimatrix. For $A \in \mathcal{B}(\mathcal{H},\mathcal{K})$, we write the product $A \Ess = (A \ess_1, A\ess_2, \dots, A\ess_k)$. For $B \in \mathbb{R}^{k\times \ell}$, we write $\Ess B = \left( \sum_{j=1}^k B_{j1}\,\ess_j, \ldots, \sum_{j=1}^k B_{j\ell}\,\ess_j \right)$.
\end{definition}
It is readily checked that the products defined above are associative. We have followed Townsend--Trefethen~\cite{townsend2015continuous} in calling this a ``quasimatrix'', a matrix-like object with infinite-dimensional columns.

\begin{remark}\label{remark:finite_dim_collapse}
    The product $A\Ess$ is worth dwelling on when $A \in \mathcal B(\mathcal H, \mathbb R^k)$ for some finite $k$, as will shortly be the case with $A=V_k^T$. Then each $A\ess_j$ is a sketch functional on the finite-dimensional space $\mathbb R^k$, that is, an element of $(\mathbb R^k)^\#$, and by the (finite-dimensional) Riesz representation theorem $A\ess_j$ is simply an ordinary random vector in $\mathbb R^k$: the one representing the functional $y \mapsto A\ess_j(y)$. So $A\Ess=(A\ess_1,\dots,A\ess_\ell)$, a quasimatrix whose columns are these random vectors, is nothing more exotic than an ordinary $k\times\ell$ random matrix. It is this collapse to a finite-dimensional object that lets us apply ordinary matrix notions --- rank, singular values, the Moore--Penrose pseudoinverse --- directly to quasimatrix products landing in a finite-dimensional space, as we do with $\Ess_1$ from Lemma~\ref{lemma:inf_dim_structural_bound} onwards.
\end{remark}

Now that we have a useful notion of products with white noise, we can compute a randomised SVD and give the infinite-dimensional form of the structural bound that we require.

\begin{algorithm} 
\caption{Infinite-Dimensional Randomised SVD Algorithm (General Covariance)}
\label{alg:boullealg}
\textbf{Input:} Hilbert--Schmidt target operator $A \in \text{HS}(\mathcal H,\mathcal{K})$, sketch size $\ell$\\
\textbf{Output:} rank-$\ell$ approximation $\hat A$
\begin{algorithmic}[1]
\State Sample independent sketch functionals $\Ess = (\ess_1, \dots, \ess_\ell)$
\State $Y \gets A \Ess$ (according to Definition~\ref{definition:generalised_sketch_matrices})
\State $Q \gets \text{orth}(Y)$
\State $\tilde U, \tilde \Sigma, \tilde V \gets \mathrm{svd}(Q^TA)$
\State $\hat A \gets Q Q^T A=(Q\tilde U) \tilde \Sigma \tilde V^T$
\end{algorithmic}
\end{algorithm} 

\begin{lemma} \label{lemma:inf_dim_structural_bound}
    Let $A:\mathcal{H} \to \mathcal{K}$ be Hilbert--Schmidt with SVD $U \Sigma V^T$. With sketch functional quasimatrix $\Ess = \big ( \ess_1, \dots ,\ess_\ell \big )$ and target rank $k \le \ell$, define 
    \begin{align*}
    \Sigma_\perp &= \mathrm{diag}(\sigma_{k+1}, \sigma_{k+2}, \dots)\\
    V_k &= (v_1, \dots, v_k), \quad V_\perp = (v_{k+1},v_{k+2},\dots)\\
    \Ess_1 &=  V_k^T \Ess, \quad\Ess_2 = V_\perp^T\Ess
\end{align*}
as sketch functional products. Assuming $\Ess_1$ is full rank,
\begin{equation*}
    \left \|(I - QQ^T)A\right \|^2 \le \| \Sigma_\perp \|^2+\|\Sigma_\perp \Ess_2 \Ess_1^\dagger\|^2
    \end{equation*}
in both the spectral and Hilbert--Schmidt norms, with $Q=\mathrm{orth}(A \Ess)$.
\end{lemma}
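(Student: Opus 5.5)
The plan is to follow the Halko--Martinsson--Tropp structural argument, taking care over how the infinite-dimensional objects are represented. By the corollary to Proposition~\ref{prop:product_cov}, $Y = A\Ess$ is almost surely an honest quasimatrix with $\ell$ columns in $\mathcal K$. Hence $Q=\mathrm{orth}(Y)$ is a genuine finite-rank orthonormal quasimatrix, and $P_Y = QQ^T$ is the orthogonal projector onto $\mathrm{range}(Y)$.

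\textbf{Step 1: reduce to a diagonal problem.} Write $A = U\Sigma V^T$. Both norms are unitarily invariant, and $U$ is an isometry with $(I - P_{Y})U = U(I - P_{U^TY})$ on $\mathrm{range}(U)$. So it suffices to bound $\|(I-P_{\Sigma V^T\Ess})\Sigma\|$. Associativity of the products in Definition~\ref{definition:generalised_sketch_matrices} gives
\[ \Sigma V^T\Ess = \begin{pmatrix} \Sigma_k \Ess_1 \\ \Sigma_\perp \Ess_2\end{pmatrix}, \qquad \Sigma_k=\mathrm{diag}(\sigma_1,\dots,\sigma_k). \]
Here $\Ess_1$ is an ordinary $k\times\ell$ random matrix by Remark~\ref{remark:finite_dim_collapse}. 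The lower block $\Sigma_\perp\Ess_2$ is a genuine quasimatrix in $\ell^2$, again because $\Sigma_\perp$ is Hilbert--Schmidt. If some of $\sigma_1,\dots,\sigma_k$ vanish, the right-hand side is already at least $\|\Sigma\|^2$, which bounds the left-hand side trivially. So I will assume $\Sigma_k$ is invertible.

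\textbf{Step 2: shrink the range.} Since $\Ess_1$ has full row rank, $\Ess_1\Ess_1^\dagger = I_k$. Set
\[ Z = \Sigma V^T\Ess\,\Ess_1^\dagger\Sigma_k^{-1} = \begin{pmatrix} I_k \\ F\end{pmatrix}, \qquad F = \Sigma_\perp\Ess_2\Ess_1^\dagger\Sigma_k^{-1}. \]
$F$ is a bounded, finite-rank map $\mathbb R^k\to\ell^2$. Because $\mathrm{range}(Z)\subseteq\mathrm{range}(\Sigma V^T\Ess)$, projector monotonicity gives
\[ \|(I-P_{\Sigma V^T\Ess})\Sigma\|\le\|(I-P_Z)\Sigma\| \]
in both norms. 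For the HS norm this follows because $\|(I-P)\Sigma\|_{HS}^2=\tr(\Sigma^T(I-P)\Sigma)$ and $I-P_{\text{big}}\preceq I-P_{\text{small}}$. For the spectral norm it follows the same way via $\|(I-P)\Sigma\|_2^2=\|\Sigma^T(I-P)\Sigma\|_2$.

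\textbf{Step 3: the block computation, which is the main obstacle.} We need to compute $I-P_Z$ explicitly. Since $Z^TZ = I_k+F^TF$ is a $k\times k$ invertible matrix, $P_Z = Z(I+F^TF)^{-1}Z^T$ makes sense. Following HMT Proposition 8.2, one shows the operator inequality
\[ I-P_Z \preceq \begin{pmatrix} F^TF & * \\ * & I\end{pmatrix}, \]
more precisely with off-diagonal blocks $-(I+F^TF)^{-1}F^T$ and its transpose. This uses $I-(I+F^TF)^{-1}\preceq F^TF$ and $I - F(I+F^TF)^{-1}F^T\preceq I$. The care needed is that these identities involve operators on $\ell^2$ rather than matrices. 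I expect this to go through because $F$ is finite rank, so all inverses are $k\times k$, and conjugation-preserves-order holds for bounded SPSD operators.

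Conjugating by $\Sigma=\mathrm{diag}(\Sigma_k,\Sigma_\perp)$ gives
\[ \Sigma^T(I-P_Z)\Sigma\preceq\begin{pmatrix}\Sigma_k F^TF\Sigma_k & -B^T \\ -B & \Sigma_\perp^2\end{pmatrix}, \]
where $B=\Sigma_\perp(I-P_Z)_{21}\Sigma_k$. Noting $F\Sigma_k = \Sigma_\perp\Ess_2\Ess_1^\dagger$, the HMT argument bounds the norm of this block operator by the sum of the norms of its diagonal blocks. For the trace this is immediate. For the spectral norm one uses $\|M\|\le\|M_{11}\|+\|M_{22}\|$ for SPSD block $M$. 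This yields $\|\Sigma_\perp\|^2 + \|\Sigma_\perp\Ess_2\Ess_1^\dagger\|^2$ in both the spectral and HS norms, as claimed.
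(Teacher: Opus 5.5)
Your argument is correct when $\Sigma_k$ is invertible, but your dismissal of the degenerate case is wrong. The singular values are ordered decreasingly. So if some $\sigma_j$ with $j\le k$ vanishes, then $\sigma_k=0$ and hence $\Sigma_\perp=0$. The right-hand side is then $0$, not at least $\|\Sigma\|^2$, and the lemma is asserting exact recovery $QQ^TA=A$. This needs its own argument, though a short one. With $\Sigma_\perp=0$ you have $A\Ess=U_k\Sigma_k\Ess_1$. Since $\Ess_1$ has full row rank it maps $\mathbb R^\ell$ onto $\mathbb R^k$, so $\mathrm{range}(A\Ess)=\mathrm{range}(U_k\Sigma_k)=\mathrm{range}(A)$ and $(I-QQ^T)A=0$. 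There is also a cosmetic slip: your $B=\Sigma_\perp(I-P_Z)_{21}\Sigma_k$ already carries the minus sign, so the off-diagonal blocks should read $B$ and $B^T$. The sign plays no role afterwards.

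Apart from this, your route differs from the paper's. You reproduce the Halko--Martinsson--Tropp proof of their Theorem 9.1: rotate to $\Sigma$, shrink the range to $Z$ with identity top block, bound $I-P_Z$ blockwise, conjugate by $\Sigma$, and use the diagonal-block bound for PSD operators. The paper uses a shorter argument. Because $A\Ess\Ess_1^\dagger V_k^T$ has range inside $\mathrm{range}(Q)$, one has $(I-QQ^T)A=(I-QQ^T)(A-A\Ess\Ess_1^\dagger V_k^T)$. Then $\Ess_1\Ess_1^\dagger=I_k$ turns the bracket into $U_\perp\Sigma_\perp V_\perp^T-U_\perp\Sigma_\perp\Ess_2\Ess_1^\dagger V_k^T$. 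Discarding the contraction $I-QQ^T$ and splitting the two terms using the orthogonal ranges of $V_\perp$ and $V_k$ gives the bound in both norms at once.

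That route never inverts $\Sigma_k$, so the degenerate case does not arise. It also avoids the operator-order facts you must verify in infinite dimensions: projector monotonicity, order preservation under congruence, trace monotonicity, and $\|M\|\le\|M_{11}\|+\|M_{22}\|$ for PSD block operators. All of these do hold here, since $F$ has finite rank and $\Sigma_\perp$ is Hilbert--Schmidt. What your route gains is an intermediate operator inequality that keeps more structure. For instance, retaining $F^TF(I+F^TF)^{-1}$ in place of $F^TF$ in the top-left block gives a slightly sharper bound than the one stated.
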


\begin{proof}
   We follow the definitions in Definition \ref{definition:generalised_sketch_matrices}. Since $QQ^T$ is the orthogonal projector onto the columns of $A \Ess$, we have that $(I-QQ^T)A\Ess(V_k^T \Ess)^\dagger V_k^T=0$. Therefore, 
    \begin{equation*}
        (I - Q Q^T)A = (I - Q Q^T)(A - A \Ess (V_k^T \Ess)^\dagger V_k^T)
    \end{equation*}
Now,
\begin{align*}
    A- A \Ess (V_k^T \Ess)^\dagger V_k^T &= A - (U_k \Sigma_k V_k^T \Ess(V_k^T \Ess)^\dagger V_k^T+ U_\perp \Sigma_\perp V_\perp ^T \Ess(V_k^T \Ess)^\dagger V_k^T)\\
    &= A - U_k \Sigma_kV_k^T - U_\perp \Sigma_\perp V_\perp ^T \Ess(V_k^T \Ess)^\dagger V_k^T\\
    &= U_\perp \Sigma_\perp V_\perp ^T- U_\perp \Sigma_\perp (V_\perp^T \Ess)(V_k^T \Ess)^\dagger V_k^T\text{,}
\end{align*}
with the second equality following because $\Ess_1$ is a full-rank $k\times \ell$ matrix.
Hence, in either norm,
\begin{align*}
    \|(I-QQ^T)A\|^2 &= \|(I-QQ^T)(A-A \Ess(V_k^T \Ess)^\dagger V_k^T)\|^2\\
    &= \|(I-QQ^T)(U_\perp \Sigma_\perp V_\perp^T - U_\perp \Sigma_\perp \Ess_2 \Ess_1^\dagger V_k^T)\|^2 \\
    & \le \|U_\perp \Sigma_\perp V_\perp^T - U_\perp \Sigma_\perp \Ess_2 \Ess_1^\dagger V_k^T\|^2\\
    &\le \|U_\perp \Sigma_\perp V_\perp^T\|^2 + \|U_\perp \Sigma_\perp \Ess_2 \Ess_1^\dagger V_k^T\|^2\\
    &= \|\Sigma_\perp\|^2 + \|\Sigma_\perp \Ess_2 \Ess_1^\dagger\|^2\text{.}
\end{align*}
where the penultimate inequality follows from orthogonality of the ranges $V_\perp$ and $V_k$.
\end{proof}

\begin{remark}
    The full-row-rank assumption for $\Ess_1 \in \mathbb R^{k\times \ell}$ depends on the sketch distribution. For independent centred Gaussian sketch functionals with covariance operator $C$, $\Ess_1$ has full row rank almost surely whenever $\ell\ge k$ and $V_k^TCV_k$ is positive definite. If this projected covariance is singular, full row rank is impossible. For discrete projected sketches, such as a Rademacher sketch in a coordinate subspace, rank deficiency can occur with positive probability. Repeated columns alone do not force rank deficiency when $\ell>k$: for example, the columns $(1,1)^T$, $(1,1)^T$ and $(1,-1)^T$ still span $\mathbb R^2$. For independent isotropic sub-Gaussian sketches, Theorem~\ref{theorem:twosided} gives a quantitative bound on rank-failure probability when its lower singular-value estimate is positive, as used in Section~\ref{subsection:mainresults}.
\end{remark}

\begin{remark}\label{rem:complex}
    The definitions of this section extend naturally to the complex case; $\mathcal H$ is instead a complex Hilbert space and $\mathcal H^{\#}=\mathcal B(\mathcal H, L^2_0(\mathbb P; \mathbb C))$. Replace transpose by conjugate transpose throughout and much remains unchanged. We will briefly consider such a case in Section~\ref{subsection:discrete_schrod}.
\end{remark}

\subsection{Reduction to Finite Dimensions}
In theory, storing a sample of the sketch functional like the isotropic Gaussian exactly on a computer is impossible as it relies on infinitely many components $\ess_i$ which do not shrink. Following common computational practice, we project $\ess = \sum_{i \in \mathbb N} \ess_i \langle  \cdot, p_i\rangle$ onto the first $n$ basis vectors $p_i$ and instead treat $\hat \ess = \sum_{i \le n} \ess_i \langle \cdot, p_i\rangle$ (as is done, for example, by Chebfun~\cite{driscoll2014chebfun}). If $P_n = (p_1, p_2, \cdots, p_n)$ then we can write this projection as a sketch functional product $P_nP_n^T \ess$ (see Definition \ref{def:product}). For every fixed $x\in\mathcal H$, $(\ess-P_nP_n^T\ess)(x)\to0$ in $L^2(\mathbb P)$, since $P_nP_n^Tx\to x$ in $\mathcal H$ and $\ess$ is bounded. If $C$ is not compact, however, the truncations do not converge to $\ess$ in the operator norm of $\mathcal B(\mathcal H,L^2(\mathbb P))$. Further, if $C$ is not trace-class, the residual $\ess-P_nP_n^T\ess$ has infinite expected squared $\mathcal H$-norm. The following Lemma shows that projecting does work when combined with a Hilbert--Schmidt operator.

\begin{lemma}
    If $\ess$ is a sketch functional on $\mathcal{H}$ with bounded covariance operator $C$, and $A \in HS(\mathcal{H},\mathcal{K})$, then
    \begin{equation}
    \mathbb E[\|A\ess - AP_n P_n^T \ess\|_{\mathcal{K}}^2] = \|A(I-P_nP_n^T)C^{1/2}\|_{HS}^2 \to 0
    \end{equation}
    as $n \to \infty$.
    \begin{proof}
        This follows from
        \begin{equation*}
            A \ess - AP_nP_n^T \ess = A(I-P_nP_n^T)\ess,
        \end{equation*}
        which has covariance operator $A(I-P_nP_n^T)C(I-P_nP_n^T)A^T$
        so
        \begin{align*}
            \mathbb E[\|A\ess - AP_n P_n^T \ess\|_{\mathcal{K}}^2] &= \tr(A(I-P_nP_n^T)C(I-P_nP_n^T)A^T)\\
            &= \|A(I-P_nP_n^T)C^{1/2}\|_{HS}^2.
        \end{align*}
        To prove the limit, note that 
        \begin{align*}
            \|A(I-P_nP_n^T)C^{1/2}\|_{HS}^2 &\leq \|A(I-P_nP_n^T)\|_{HS}^2 \|C^{1/2}\|_{2}^2 \\
             &\leq \|C\|_2\sum_{j > n} \|A p_j\|_2^2 \\
             & \to 0
        \end{align*}
        since $A$ is Hilbert--Schmidt so $\|A\|_{HS}^2 = \sum _{n \in \mathbb N} \|Ap_n\|_2^2 < \infty$.
    \end{proof}
\end{lemma}

In most applications, this will decrease sufficiently quickly so as to not be an issue.

We also analyse the error this truncation provokes in the randomised SVD. When applying the infinite-dimensional randomised SVD to $A$ with a truncated sketch, we form $Y = A P_nP_n^T \Ess$ and set $Q = \mathrm{orth}(Y)$. The resulting $Q$ is equivalent to what we would get from applying the algorithm to $AP_nP_n^T$ with a full sketch, and usual theory yields guarantees on $\|AP_nP_n^T - QQ^TAP_nP_n^T\|$. We then have
\begin{align*}
    \left \| A - QQ^TA \right \|_{HS}^2 &= \left \| (A P_nP_n^T + AP_\perp P_\perp^T) - (QQ^T AP_nP_n^T + QQ^T A P_\perp P_\perp^T)\right \|_{HS}^2 \\
    &= \left \| A P_nP_n^T - QQ^T A P_nP_n^T \right \|_{HS}^2 + \left \| A P_\perp P_\perp^T - QQ^T AP_\perp P_\perp^T\right \|_{HS}^2\\
    & \le \|A P_nP_n^T - QQ^T AP_n P_n^T\|_{HS}^2 + \left \|AP_\perp P_\perp^T \right \|_{HS}^2\left \| I-QQ^T\right \|_2^2 \\
    &=\|A P_nP_n^T - QQ^T AP_n P_n^T\|_{HS}^2 + \left \|AP_\perp P_\perp^T \right \|_{HS}^2,
\end{align*}
so that the error induced by this truncation can be no greater than $\| AP_\perp P_\perp^T\|_{HS}^2$. As detailed in the proof of Corollary~\ref{corollary:nystrom}, the error of a Nystr\"om approximation can be rephrased in terms of the error of a randomised SVD, so this discussion also carries over.

\subsection{A Note on the Gaussian Case}

One common definition of a Gaussian random vector $\ess \in \mathbb R^n$ is the requirement that $\langle \ess, x\rangle_{\mathbb R^n}$ is a Gaussian random variable for all $x \in \mathbb R^n$. This seems natural with sketch functional notation, and we follow that convention here and later in the sub-Gaussian portion of the paper.

\begin{definition}[Gaussian Sketch Functional {\cite[\S VI.3.1]{vakhania1987probability}}]\label{def:gaussian_skfun}
    We say a sketch functional $\ess$ on $\mathcal H$ is Gaussian if for all $x \in \mathcal H$, $\ess(x)$ is a Gaussian random variable. By linearity, a finite collection $\big( \ess(x_1), \ess(x_2), \cdots, \ess(x_n)\big)$ is also jointly Gaussian.
\end{definition}

The approach of Kressner et al.~\cite{KRESSNER2026259} yields the following Theorem when sketch functional notation is used. Implicitly, they sampled a centred isotropic Gaussian sketch functional.

\begin{theorem}[\cite{KRESSNER2026259}]\label{theorem:KPU_maintheorem}
Let $p\ge 4$ be a small oversampling parameter, and sample $\ell=k+p$ centred isotropic Gaussian sketch functionals.
If $\hat A$ is the output of Algorithm~\ref{alg:boullealg},
    \begin{equation}
        \mathbb E \left [\left \| A- \hat A\right\|^2_{HS}\right] \le \bigg(1+\frac{k}{p-1}\bigg)\big\| \Sigma_\perp\big\|_{HS}^2,
    \end{equation}
    and for any $u,t \ge 1$, the following inequality holds with probability at least $1-2t^{-p}-e^{-u^2/2}$
    \begin{equation}
    \left\|A - \hat A\right\|_{HS} \le \big\| \Sigma_\perp\big\|_{HS}+t\sqrt{\frac{3k}{p+1}}\big\| \Sigma_\perp\big\|_{HS} + ut\frac{e\sqrt{k+p}}{p+1}\big\| \Sigma_\perp\big\|_2
    \end{equation} 
\end{theorem}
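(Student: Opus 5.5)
The plan is to reduce to the finite-dimensional Halko--Martinsson--Tropp argument via the structural bound, Lemma~\ref{lemma:inf_dim_structural_bound}, using rotational invariance of the Gaussian sketch functional. By Proposition~\ref{prop:product_cov}, $V^T\ess_j$ (with $V=(V_k,V_\perp)$ an isometry onto the closure of the range of $A^T$, extended to a full orthonormal basis if necessary) is an isotropic Gaussian sketch functional on $\ell^2$. So the coordinates $\ess_j(v_i)$ are i.i.d.\ $\mathcal N(0,1)$ across $i$ and $j$. By Definition~\ref{def:gaussian_skfun}, uncorrelated jointly Gaussian variables are independent, and the $\ess_j$ are independent of one another. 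Hence $\Ess_1=V_k^T\Ess$ is a standard Gaussian $k\times\ell$ matrix (Remark~\ref{remark:finite_dim_collapse}). It has full rank almost surely and is independent of the family $\{\ess_j(v_i)\}_{i>k}$ that determines $\Ess_2$.

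For the expectation bound, I apply Lemma~\ref{lemma:inf_dim_structural_bound} in the Hilbert--Schmidt norm and take expectations, conditioning on $\Ess_1$. The term $\Sigma_\perp\Ess_2\Ess_1^\dagger$ is $\Sigma_\perp G F$ with $F=\Ess_1^\dagger$ fixed and $G$ an infinite array of i.i.d.\ standard normals. Its squared HS norm is $\sum_{i>k}\sigma_i^2\sum_{r}(\sum_j G_{ij}F_{jr})^2$, with conditional expectation $\|\Sigma_\perp\|_{HS}^2\|F\|_{HS}^2$. This uses only second moments, so there is no issue about $\Ess_2$ being in a Hilbert space: the sum converges in $L^1$ because $(\sigma_i)\in\ell^2$, and monotone convergence justifies the exchange. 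Then $\mathbb E\|\Ess_1^\dagger\|_F^2=k/(p-1)$ is the classical inverse-Wishart fact, which gives $(1+k/(p-1))\|\Sigma_\perp\|_{HS}^2$.

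For the tail bound, I take square roots in Lemma~\ref{lemma:inf_dim_structural_bound}, so that $\|A-\hat A\|_{HS}\le\|\Sigma_\perp\|_{HS}+\|\Sigma_\perp\Ess_2\Ess_1^\dagger\|_{HS}$. Conditional on $\Ess_1$, the map $G\mapsto\|\Sigma_\perp GF\|_{HS}$ is Lipschitz in the HS norm of $G$ with constant $\|\Sigma_\perp\|_2\|F\|_2$. Gaussian concentration then gives
\[
\|\Sigma_\perp GF\|_{HS}\le \|\Sigma_\perp\|_{HS}\|F\|_{HS}+u\|\Sigma_\perp\|_2\|F\|_2
\]
with probability at least $1-e^{-u^2/2}$. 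This infinite-dimensional concentration is the step I expect to need care. I would first apply it to the truncation to rows $k<i\le n$, which is an honest finite Gaussian vector, and then let $n\to\infty$ using monotone convergence of the norms and continuity of probability. The mean is handled via Jensen from the second-moment computation above.

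Finally, I combine this with the standard deviation bounds for a Gaussian pseudoinverse from~\cite{halko2011finding}: $\mathbb P(\|\Ess_1^\dagger\|_F>t\sqrt{3k/(p+1)})\le t^{-p}$ and $\mathbb P(\|\Ess_1^\dagger\|_2>t\,e\sqrt{k+p}/(p+1))\le t^{-(p+1)}\le t^{-p}$. A union bound yields failure probability at most $2t^{-p}+e^{-u^2/2}$ and the stated inequality.
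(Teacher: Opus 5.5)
Your proof is correct. The paper does not reprove this theorem; it quotes it from Kressner--Persson--Uschmajew, whose argument is exactly this lifting of Halko--Martinsson--Tropp, and you reproduce it faithfully. The ingredients match: the structural bound of Lemma~\ref{lemma:inf_dim_structural_bound}; rotational invariance making $\Ess_1$ a standard Gaussian matrix independent of $\Sigma_\perp\Ess_2$; conditioning together with $\mathbb E\|\Ess_1^\dagger\|_F^2=k/(p-1)$ for the mean; and Lipschitz concentration plus the pseudoinverse deviation bounds for the tail. Your truncation-and-monotone-limit device is a sound way to justify the concentration step for the infinite Gaussian array, because the strict-inequality events increase to the limiting event.
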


We also give the following result for centred Gaussian sketches with bounded covariance operators.

\begin{theorem}\label{theorem:gaussian_cov}
    Let $C$ be a bounded SPSD operator on $\mathcal H$. Let $p \ge 4$ be a small oversampling parameter and sample $\ell=k+p$ independent centred Gaussian sketch functionals with covariance operator $C$. For a Hilbert--Schmidt target operator $A= U \Sigma V^T$, assume $V_k^TC^{1/2}$ has rank $k$. If $\|\Sigma_\perp\|_{HS}>0$, denote
    $$\beta_k= \frac{\| \Sigma_\perp V_\perp^TC^{1/2}\|_{HS}}{\sigma_k(V_k^TC^{1/2})\| \Sigma_\perp\|_{HS}}\text{.}$$
    Then with probability at least $1-(1+u)^{\ell /2}\exp(- u \ell /2)-t^{-(p+1)}$,
    \begin{equation}
        \|(I-QQ^T)A\|_{HS} \le \|\Sigma_\perp\|_{HS}\left(1 +  \beta_k\frac{e(k+p)}{p+1}t \sqrt{1+u}\right).
    \end{equation}
    If $\|\Sigma_\perp\|_{HS}=0$, then $QQ^TA=A$ almost surely, without defining $\beta_k$.
\end{theorem}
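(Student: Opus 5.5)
The plan is to reduce to an isotropic Gaussian sketch, apply the structural bound of Lemma~\ref{lemma:inf_dim_structural_bound}, and then bound two random factors separately before combining them with a union bound.

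\textbf{Reduction and full rank.} Write each $\ess_j = C^{1/2}\mathbf g_j$ with $\mathbf g_j$ independent centred isotropic Gaussian sketch functionals, in the sense of Definition~\ref{def:product}. By Proposition~\ref{prop:product_cov}, $C^{1/2}\mathbf g_j$ has covariance $C$ and is Gaussian, so it has the required distribution. Let $\mathbf G=(\mathbf g_1,\dots,\mathbf g_\ell)$ and $M = V_k^TC^{1/2}$, an operator $\mathcal H\to\mathbb R^k$ of rank $k$ by assumption. Take its SVD $M = U_M S_M W_M^T$, where $S_M$ is $k\times k$ invertible with smallest entry $\sigma_k(M)$ and $W_M:\mathbb R^k\to\mathcal H$ is an isometry. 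Then
\[
\Ess_1 = U_M S_M \Omega_1, \qquad \Omega_1 = W_M^T\mathbf G .
\]
By Remark~\ref{remark:finite_dim_collapse} and isotropy, $\Omega_1$ is an ordinary $k\times \ell$ standard Gaussian matrix. Hence $\Omega_1$, and therefore $\Ess_1$, has full row rank almost surely, and
\[
\Ess_1^\dagger = \Omega_1^\dagger S_M^{-1}U_M^T .
\]

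\textbf{Structural bound.} Lemma~\ref{lemma:inf_dim_structural_bound} together with $\sqrt{a^2+b^2}\le a+b$ gives
\[
\|(I-QQ^T)A\|_{HS}\le \|\Sigma_\perp\|_{HS} + \|B\mathbf G\,\Omega_1^\dagger S_M^{-1}U_M^T\|_{HS}, \qquad B := \Sigma_\perp V_\perp^T C^{1/2}.
\]
Here $B$ is Hilbert--Schmidt, since $\Sigma_\perp V_\perp^T$ is Hilbert--Schmidt and $C^{1/2}$ is bounded. I will then use the crude bound
\[
\|B\mathbf G\,\Omega_1^\dagger S_M^{-1}U_M^T\|_{HS}\le \frac{\|B\mathbf G\|_{HS}\,\|\Omega_1^\dagger\|_2}{\sigma_k(M)}.
\]
Since $\|B\|_{HS}/\sigma_k(M) = \beta_k\|\Sigma_\perp\|_{HS}$, it suffices to control $\|B\mathbf G\|_{HS}^2/\|B\|_{HS}^2$ and $\|\Omega_1^\dagger\|_2$.

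\textbf{The two factors.} For $\|\Omega_1^\dagger\|_2$, I invoke the standard Gaussian pseudoinverse tail bound of Halko--Martinsson--Tropp:
\[
\mathbb P\Big(\|\Omega_1^\dagger\|_2 > \tfrac{e\sqrt{\ell}}{p+1}\,t\Big)\le t^{-(p+1)} .
\]
For $\|B\mathbf G\|_{HS}^2$, the columns $B\mathbf g_j$ are independent $\mathcal K$-valued Gaussians with trace-class covariance $BB^T$ (by the Corollary following Proposition~\ref{prop:traceclass}). By the Karhunen--Lo\`eve property,
\[
\|B\mathbf G\|_{HS}^2 = \sum_i \lambda_i X_i, \qquad X_i \overset{iid}{\sim}\chi^2_\ell, \qquad \textstyle\sum_i\lambda_i=\|B\|_{HS}^2 .
\]
A Chernoff argument then gives
\[
\mathbb P\big(\|B\mathbf G\|_{HS}^2 > (1+u)\,\ell\,\|B\|_{HS}^2\big)\le (1+u)^{\ell/2}e^{-u\ell/2}.
\]
The key point is that $-\log(1-2s\lambda_i)$ is convex in $\lambda_i$, so the moment generating function of the weighted sum is maximised when all the weight sits on one term. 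This reduces the tail to that of a single $\lambda\chi^2_\ell$ with $\lambda = \|B\|_{HS}^2$, and optimising $s$ yields the stated tail. This step is the one I expect to need the most care, in particular justifying the infinite product of moment generating functions by a monotone limit. A union bound over the two events then gives
\[
\sqrt{(1+u)\ell}\cdot \frac{e\sqrt\ell\, t}{p+1} = \frac{e(k+p)}{p+1}\,t\sqrt{1+u},
\]
which is exactly the claimed bound with the claimed probability.

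\textbf{Degenerate case.} If $\|\Sigma_\perp\|_{HS}=0$, then $A=U_k\Sigma_kV_k^T$ and $A\Ess = U_k\Sigma_k\Ess_1$. Here $\Ess_1$ has full row rank almost surely, as shown above, and $\Sigma_k$ is invertible provided $\sigma_k>0$; if instead $\sigma_k=0$, apply the same argument with $k$ replaced by the rank of $A$. Thus $\mathrm{range}(Q)=\mathrm{range}(U_k)\supseteq\mathrm{range}(A)$, and so $QQ^TA=A$ almost surely.
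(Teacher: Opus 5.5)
Your proposal is correct and follows essentially the same route as the paper: Lemma~\ref{lemma:inf_dim_structural_bound}, the crude split $\|\Sigma_\perp\Ess_2\|_{HS}\|\Ess_1^\dagger\|_2$, factoring $\sigma_k(V_k^TC^{1/2})$ out of $\Ess_1$ to reduce to a $k\times\ell$ standard Gaussian matrix controlled by Halko--Martinsson--Tropp, and a union bound. The only difference is that you derive the $(1+u)^{\ell/2}e^{-u\ell/2}$ tail for $\|\Sigma_\perp\Ess_2\|_{HS}^2$ yourself, using a Chernoff bound and the superadditivity of the convex function $x\mapsto-\log(1-2sx)$ (this checks out, with optimiser $2s\|B\|_{HS}^2=u/(1+u)$), whereas the paper simply cites Boull\'e--Townsend's Lemma~7.
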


\begin{proof}
If $\|\Sigma_\perp\|_{HS}=0$, then $\Ess_1$ has full row rank almost surely because its Gaussian columns have positive definite covariance $V_k^TCV_k$. Lemma~\ref{lemma:inf_dim_structural_bound} therefore gives $QQ^TA=A$. It remains to consider $\|\Sigma_\perp\|_{HS}>0$.

It is sufficient to bound $\| \Sigma_\perp \Ess_2 \Ess_1^\dagger\|_{HS} \le \|\Sigma _\perp \Ess_2 \|_{HS} \|\Ess_1^\dagger\|_2$. Note that if $\Omega$ is an isotropic Gaussian sketch, then $\Ess \overset{d}{=} C^{1/2} \Omega$.

\underline{$\Sigma_\perp \Ess_2$:}
By \cite[Lemma 7]{boulle2021generalization}, we have
\begin{equation*}
    \mathbb P\big( \| \Sigma_\perp \Ess_2\|_{HS}^2 > \ell(1+u)\tr(\Sigma_\perp V_\perp^T C V_\perp\Sigma_\perp)\big) \le  (1+u)^{\ell/2}\exp(-u\ell/2)
\end{equation*}
Note that $\tr(\Sigma_\perp V_\perp^T C V_\perp\Sigma_\perp) = \|\Sigma_\perp V_\perp^TC^{1/2}\|_{HS}^2$. If this trace is zero, then $\Sigma_\perp\Ess_2=0$ almost surely and the strict upper-tail event has probability zero.

\underline{$\Ess_1^\dagger$:}
We have that $\Ess_1 = V_k^T C^{1/2} \Omega = V_k^TC^{1/2} W_kW_k^T \Omega$, so $$\sigma_k (\Ess_1) \ge \sigma_k(V_k^TC^{1/2} W_k) \sigma_k(W_k^T \Omega) = \sigma_k(V_k^TC^{1/2}) \sigma_k(W_k^T \Omega).$$
$W_k^T\Omega$ is a $k \times \ell$ standard Gaussian matrix, and hence is subject to \cite[Proposition 10.4]{halko2011finding} yielding

\begin{equation*}
    \mathbb P\left(\|(W^T_k \Omega)^\dagger\|_2 \ge \frac{e\sqrt{k+p}}{p+1}t\right) \le t^{-(p+1)}
\end{equation*}
therefore
\begin{align*}
    \mathbb P\left( \| \Ess_1^\dagger\|_2 \ge \frac{e\sqrt{k+p}}{(p+1)\sigma_k(V_k^TC^{1/2})}t\right) &\le \mathbb P\left(\frac{\| (W_k^T\Omega)^\dagger\|_2}{\sigma_k(V_k^TC^{1/2})} \ge\frac{e\sqrt{k+p}}{(p+1)\sigma_k(V_k^TC^{1/2})}t\right)  \\
    &= \mathbb P\left( \|(W_k^T \Omega) ^\dagger \|_2 \ge \frac{e\sqrt{k+p}}{(p+1)}t \right) \\ 
    &\le t^{-(p+1)}
\end{align*}
By a union bound, the probability that these events hold simultaneously is at least $1-(1+u)^{\ell /2}\exp(- u \ell /2)-t^{-(p+1)}$ and the claim follows.
\end{proof}

\begin{remark}\label{rem:BoulleTownsendfactors}
    Let us examine the scaling factor $\beta_k$. If $\lambda_{\max}(V_\perp^TCV_\perp)=0$ but $\lambda_{\min}(V_k^TCV_k)\ne 0$, then the columns of $C$ (and therefore of $\Ess$ with sufficient $\ell$) span $V_k$, the correct first $k$ singular vectors. This is a happy breakdown; we have chosen the correct subspace, and so the algorithm will converge immediately to the optimal rank-$k$ approximation of the target operator. On the other hand, if $\lambda_{\max}(V_\perp^TCV_\perp)$ is much larger than $\lambda_{\min}(V_k^TCV_k)$, then our sketch functionals will have relatively large components in the wrong subspace $V_\perp$, and the approximation will become worse.
\end{remark}

For comparison, we state here the main result of Boull\'e--Townsend~\cite{boulle2021generalization}.

\begin{theorem}[\cite{boulle2021generalization}]\label{theorem:bt_maintheorem}
Let $C$ be a trace-class SPSD operator on $\mathcal H$, let $k\in\mathbb N$ be the target rank and let $p\ge4$. Sample $\ell=k+p$ independent centred Gaussian random vectors in $\mathcal H$ with covariance operator $C$. Let the Hilbert--Schmidt target operator $A$ have SVD $U\Sigma V^T$ and assume $V_k^TCV_k$ is positive definite. If $\|\Sigma_\perp\|_{HS}>0$, then for all $u,t \ge 1$, with probability at least $1-t^{-p} - [ue^{-(u^2-1)/2}]^{k+p}$,
    \begin{equation}
        \|(I-QQ^T)A\|_{HS} \le \left( 1+ut\sqrt{(k+p)\frac{3k}{p+1} \frac{\alpha_k}{\gamma_k}} \right) \| \Sigma_\perp\|_{HS}{}
    \end{equation}
    where $\alpha_k = \tr(\Sigma_\perp V_\perp^T C V_\perp\Sigma_\perp)/(\|C\|_2 \|\Sigma_\perp\|_{HS}^2)$ and $\gamma_k = k/(\| C\|_2\tr((V_k^TCV_k)^{-1}))$.
    If $\|\Sigma_\perp\|_{HS}=0$, then $QQ^TA=A$ almost surely, without defining $\alpha_k$.
\end{theorem}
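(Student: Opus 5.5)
The approach follows Halko--Martinsson--Tropp's Gaussian analysis, specialised to trace-class $C$. The starting point is the structural bound of Lemma~\ref{lemma:inf_dim_structural_bound}:
\[
\|(I-QQ^T)A\|_{HS}^2 \le \|\Sigma_\perp\|_{HS}^2 + \|\Sigma_\perp\Ess_2\Ess_1^\dagger\|_{HS}^2 .
\]
We have $\Ess_1 = V_k^T\Ess$. Its columns are i.i.d.\ Gaussian in $\mathbb R^k$ with covariance $V_k^TCV_k$, which is positive definite by assumption, so $\Ess_1$ has full row rank almost surely and the lemma applies. The degenerate case $\|\Sigma_\perp\|_{HS}=0$ is immediate from the same lemma, since then the right-hand side vanishes.

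The key step is a conditioning argument. The blocks $\Ess_1$ and $\Ess_2$ are correlated, because $V_k^TCV_\perp\neq 0$ in general, so the first thing to do is decorrelate them. I would write $\Ess_2 = M\Ess_1 + \mathbf Z$, where $M = V_\perp^TCV_k(V_k^TCV_k)^{-1}$ and $\mathbf Z$ is Gaussian, independent of $\Ess_1$, with Schur-complement covariance $V_\perp^TCV_\perp - V_\perp^TCV_k(V_k^TCV_k)^{-1}V_k^TCV_\perp \preceq V_\perp^TCV_\perp$. Then
\[
\Sigma_\perp\Ess_2\Ess_1^\dagger = \Sigma_\perp M + \Sigma_\perp \mathbf Z\Ess_1^\dagger,
\]
using $\Ess_1\Ess_1^\dagger = I_k$.

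Handling the cross term $\Sigma_\perp M$ is the main obstacle: it is not small in general. The fallback is to reproduce Boull\'e--Townsend's route, which absorbs this correlation through $\|C\|_2$ factors; that is where $\alpha_k$ is normalised by $\|C\|_2$. I expect their argument first whitens, writing $\Ess = C^{1/2}\Omega$, and bounds $\|\Sigma_\perp V_\perp^TC^{1/2}\Omega\,(V_k^TC^{1/2}\Omega)^\dagger\|$. The independent term is easier: conditioning on $\Ess_1$, one gets $\mathbb E\|\Sigma_\perp\mathbf Z\Ess_1^\dagger\|_{HS}^2 \le \tr(\Sigma_\perp V_\perp^TCV_\perp\Sigma_\perp)\,\|\Ess_1^\dagger\|_{HS}^2$.

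For $\Ess_1^\dagger$, write $\Ess_1 = (V_k^TCV_k)^{1/2}G$ with $G$ a standard $k\times\ell$ Gaussian matrix. Then $\|\Ess_1^\dagger\|_{HS}^2 \le \|G^\dagger\|_{HS}^2\,\|(V_k^TCV_k)^{-1}\|_2$. To get the trace form in $\gamma_k$, I would instead use $\mathbb E\|\Ess_1^\dagger\|_{HS}^2 = \tr((V_k^TCV_k)^{-1})/(p-1)$, which follows from the inverse-Wishart mean. This gives $\tr((V_k^TCV_k)^{-1}) = k/(\gamma_k\|C\|_2)$.

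The tail bounds come from two concentration results:
\begin{itemize}
\item the Gaussian norm tail \cite[Lemma 7]{boulle2021generalization}-type bound $\|\Sigma_\perp\Ess_2\|_{HS}^2 \le \ell u^2\,\tr(\Sigma_\perp V_\perp^TCV_\perp\Sigma_\perp)$, with failure probability $[ue^{-(u^2-1)/2}]^{\ell}$;
\item the HMT pseudoinverse tail $\|G^\dagger\|_{HS}^2 \le t^2\,3k/(p+1)$, with failure probability $t^{-p}$.
\end{itemize}
Combining them with a union bound gives
\[
\|\Sigma_\perp\Ess_2\Ess_1^\dagger\|_{HS}^2 \le u^2t^2(k+p)\frac{3k}{p+1}\cdot\frac{\tr(\Sigma_\perp V_\perp^TCV_\perp\Sigma_\perp)\,\tr((V_k^TCV_k)^{-1})}{k}.
\]
The right-hand side equals $u^2t^2(k+p)\frac{3k}{p+1}\frac{\alpha_k}{\gamma_k}\|\Sigma_\perp\|_{HS}^2$. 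Finally, $\sqrt{a^2+b^2}\le a+b$ yields the stated bound.

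The delicate points are the cross term and converting $\|(V_k^TCV_k)^{-1}\|_2$ into the trace form $\tr((V_k^TCV_k)^{-1})/k$. The latter likely requires working with $\|G^\dagger\Lambda^{-1/2}\|_{HS}$ directly via rotation invariance of $G$, rather than submultiplicativity.
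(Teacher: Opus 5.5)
Your closing union-bound step is essentially the whole proof. It is the route Boull\'e--Townsend take, and the one this paper uses for Theorem~\ref{theorem:gaussian_cov}: apply $\|\Sigma_\perp\Ess_2\Ess_1^\dagger\|_{HS}\le\|\Sigma_\perp\Ess_2\|_{HS}\,\|\Ess_1^\dagger\|_{HS}$, then bound each factor using its own marginal law. A union bound needs no independence, so the correlation between $\Ess_1$ and $\Ess_2$ never has to be handled. The decomposition $\Ess_2=M\Ess_1+\mathbf Z$, the conditioning, and your ``main obstacle'' can simply be deleted. You should state the split explicitly, since it is what justifies applying the tail bound to the correlated $\Sigma_\perp\Ess_2$ rather than to $\Sigma_\perp\mathbf Z$. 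The factor $k+p$, absent from the HMT bound, is exactly the price of this crude split.

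Your claim that the normalisation by $\|C\|_2$ ``absorbs'' the correlation is not right: $\|C\|_2$ cancels in $\alpha_k/\gamma_k$. With $\Lambda=V_k^TCV_k$ the target is simply
\[
\|\Sigma_\perp\Ess_2\Ess_1^\dagger\|_{HS}^2\le u^2t^2\,\frac{3(k+p)}{p+1}\,\tr(\Sigma_\perp V_\perp^TCV_\perp\Sigma_\perp)\,\tr(\Lambda^{-1}).
\]
Nor is the cross term genuinely hard. Since $\Sigma_\perp M=(\Sigma_\perp V_\perp^TC^{1/2})(C^{1/2}V_k\Lambda^{-1})$ with $\|C^{1/2}V_k\Lambda^{-1}\|_2^2=\|\Lambda^{-1}\|_2$, it is bounded deterministically. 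The conditioning route can in principle avoid the factor $k+p$, but it does not reproduce this theorem's constants and failure probability directly.

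The genuine gap is the pseudo-inverse factor: your two bullets do not give the displayed inequality. HMT's tail for $\|G^\dagger\|_{HS}$, combined with $\|\Ess_1^\dagger\|_{HS}^2\le\|G^\dagger\|_{HS}^2\|\Lambda^{-1}\|_2$, yields only $\frac{3k}{p+1}t^2\|\Lambda^{-1}\|_2$. This exceeds the required $\frac{3}{p+1}t^2\tr(\Lambda^{-1})$ unless $\Lambda$ is a multiple of the identity. The inverse-Wishart mean does not help, since it is an expectation, not a tail bound. What you need is the weighted analogue of HMT's Proposition 10.4, $\mathbb P\big(\|\Ess_1^\dagger\|_{HS}^2\ge\frac{3}{p+1}\tr(\Lambda^{-1})\,t^2\big)\le t^{-p}$, which you only gesture at.

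It follows by rerunning HMT's moment argument:
\begin{enumerate}
\item Write $\Ess_1=\Lambda^{1/2}G$ and $\Lambda^{-1}=WDW^T$. Then $\|\Ess_1^\dagger\|_{HS}^2=\tr(\Lambda^{-1}(GG^T)^{-1})=\sum_j d_j[(\tilde G\tilde G^T)^{-1}]_{jj}$, where $\tilde G=W^TG$ is again a standard Gaussian matrix.
\item Each diagonal entry is distributed as $1/X$ with $X\sim\chi^2_{p+1}$. Minkowski's inequality in $L^{p/2}$ then gives $(\mathbb E\|\Ess_1^\dagger\|_{HS}^{p})^{2/p}\le\tr(\Lambda^{-1})(\mathbb E X^{-p/2})^{2/p}\le\frac{3}{p+1}\tr(\Lambda^{-1})$. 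The last step is HMT's computation, valid for $p\ge4$.
\item Markov's inequality finishes.
\end{enumerate}
Combining this with the Gaussian tail for $\|\Sigma_\perp\Ess_2\|_{HS}^2$ and the union bound yields exactly the stated inequality and failure probability.
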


\begin{remark}
Here, the factor $\alpha_k/\gamma_k$ is responsible for measuring the alignment of the covariance operator to the target operator. For us, $\beta_k$ plays this role, and they behave similarly. Theorem~\ref{theorem:bt_maintheorem} remains valid for bounded SPSD $C$ if the Gaussian random vectors are replaced by independent centred Gaussian sketch functionals with covariance operator $C$, retaining the other hypotheses. Boundedness of $C$ and the Hilbert--Schmidt property of $A$ ensure that the tail trace in $\alpha_k$ is finite. The inverse in $\gamma_k$ additionally requires $V_k^TCV_k$ to be positive definite, and the normalised factor $\alpha_k$ requires $\|\Sigma_\perp\|_{HS}>0$. Under these conditions, both factors are well-defined and finite, with $\gamma_k>0$; the zero-tail case instead gives exact recovery almost surely.
\end{remark}

\section{Sub-Gaussian Random Processes}
The main appeal of sketch functionals is their ability to capture more general distributions --- we only make assumptions on the first and second moments. In this section, we turn our attention to sub-Gaussian distributions, demonstrating the theory in a non-Gaussian setting. We begin with a brief introduction to the theory necessary for the rest of the paper. Once the necessary definitions are in place, we state our main results on low-rank operator approximation in Section~\ref{subsection:mainresults}.

\subsection{Sub-Gaussian Random Variables, Vectors and Processes}

The following theory around sub-Gaussian random variables will be important. We will define that term and note some useful results. These can be found with more detail in~\cite{vershynin2018high}.

\begin{definition}
    A real-valued random variable $X$ taking values in $\mathbb R$ is called sub-Gaussian if there is a $K_X>0$ such that $\mathbb P(|X|\ge t) \le 2 \exp(-K_Xt^2)$ for all $t \ge 0$.
\end{definition}

\begin{proposition}\label{prop:subg_equivalences}
    Let $X$ be a random variable taking values in $\mathbb R$. The following are equivalent, with parameters $K_i$ differing by at most an absolute constant factor.\\
    \begin{enumerate}
        \item Tails: $\exists K_1>0$ so that for all $t\ge 0$
        \begin{equation}
            \mathbb P(|X| \ge t) \le 2\exp(-t^2/K_1^2) \label{property:tails}
        \end{equation}
        \item Moments: $\exists K_2>0$ so that for all $p \ge 1$
        \begin{equation}
            \|X\|_{L^p} \le K_2 \sqrt p \label{property:moments}
        \end{equation}
        \item Moment Generating Function of $X^2$: $\exists K_3>0$ so that 
        \begin{equation}
            \mathbb E[ \exp(X^2/K_3^2)] \le 2 \label{property:mgfsq}
        \end{equation}
\end{enumerate}
    and if $\mathbb E[X]=0$, these are all equivalent to
    \begin{enumerate}[resume]
        \item Moment Generating Function of $X$: $\exists K_4>0$ so that for all $\lambda \in \mathbb R$
        \begin{equation}
            \mathbb E[\exp(\lambda X)] \le \exp(K_4^2 \lambda^2) \label{property:mgf}
        \end{equation}
    \end{enumerate}
\end{proposition}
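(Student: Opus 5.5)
I will prove the equivalences via the cycle $1 \Rightarrow 2 \Rightarrow 3 \Rightarrow 1$. For a centred $X$, I then add $3 \Rightarrow 4 \Rightarrow 1$. Throughout I track how each $K_i$ depends on the previous one, so that all constants differ by absolute factors. By homogeneity I may normalise the relevant parameter to $1$ at each step (replace $X$ by $X/K_i$). This makes the claimed constant dependence automatic.

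\emph{(1)$\Rightarrow$(2).} Start from the layer-cake formula
\[
\mathbb E|X|^p=\int_0^\infty p t^{p-1}\mathbb P(|X|\ge t)\,dt .
\]
With $K_1=1$ it is at most $2p\int_0^\infty t^{p-1}e^{-t^2}dt=p\,\Gamma(p/2)$. Stirling's bound gives $\Gamma(p/2)\le (p/2)^{p/2}$, so $(p\Gamma(p/2))^{1/p}\le p^{1/p}\sqrt{p/2}\le C\sqrt p$. Hence $K_2\lesssim K_1$.

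\emph{(2)$\Rightarrow$(3).} Expand the exponential as a power series:
\[
\mathbb E\exp(\lambda^2X^2)=1+\sum_{m\ge1}\frac{\lambda^{2m}\mathbb E X^{2m}}{m!}.
\]
With $K_2=1$ we have $\mathbb E X^{2m}\le(2m)^m$, and $m!\ge(m/e)^m$. So the series is at most $1+\sum_m(2e\lambda^2)^m$. This is $\le 2$ once $2e\lambda^2\le 1/2$, which gives $K_3\lesssim K_2$.

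\emph{(3)$\Rightarrow$(1).} Apply Markov's inequality to $e^{X^2/K_3^2}$:
\[
\mathbb P(|X|\ge t)\le 2e^{-t^2/K_3^2},
\]
so $K_1=K_3$.

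For the centred case I first prove \emph{(3)$\Rightarrow$(4)}, which I expect to be the main obstacle, since one must use $\mathbb E X=0$ to kill the linear term and handle small and large $\lambda$ separately. Normalise $K_3=1$. For $|\lambda|\le 1$, use $e^x\le x+e^{x^2}$; taking expectations and using $\mathbb EX=0$ gives
\[
\mathbb E e^{\lambda X}\le \mathbb E e^{\lambda^2X^2}\le(\mathbb E e^{X^2})^{\lambda^2}\le 2^{\lambda^2}\le e^{\lambda^2},
\]
where the middle step is Jensen's inequality since $\lambda^2\le1$. For $|\lambda|\ge1$, use $\lambda x\le \lambda^2/2+x^2/2$ to get
\[
\mathbb E e^{\lambda X}\le e^{\lambda^2/2}\,\mathbb E e^{X^2/2}\le e^{\lambda^2/2}\sqrt2\le e^{\lambda^2}.
\]
The last inequality holds because $\lambda^2/2\ge \ln\sqrt2$ when $|\lambda|\ge1$. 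So $K_4=1$ works, that is, $K_4\lesssim K_3$.

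\emph{(4)$\Rightarrow$(1).} This is the Chernoff bound. For $\lambda>0$,
\[
\mathbb P(X\ge t)\le e^{-\lambda t+K_4^2\lambda^2},
\]
and optimising at $\lambda=t/(2K_4^2)$ gives $e^{-t^2/(4K_4^2)}$. Applying the same argument to $-X$ and taking a union bound gives (1) with $K_1=2K_4$. This closes the loop.
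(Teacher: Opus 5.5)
Your proposal is correct and is essentially the standard argument of Vershynin's Proposition 2.5.2, which the paper cites in place of giving its own proof. The only variations are that you go directly to the ``$\le 2$'' form in (2)$\Rightarrow$(3), and that you obtain the small-$\lambda$ case of (3)$\Rightarrow$(4) from that form via Jensen's inequality; both are harmless.

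There is one minor slip in (1)$\Rightarrow$(2). The bound $\Gamma(p/2)\le (p/2)^{p/2}$ fails for $p\in[1,2)$; for example $\Gamma(1/2)=\sqrt{\pi}>2^{-1/2}$. Use $\Gamma(x)\le 3x^x$ for $x\ge 1/2$ instead, or monotonicity of $L^p$ norms for $p<2$. This only changes the absolute constant, giving $K_2\le 3K_1$.
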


\begin{remark}
    For centred $X$ and $K_i$ the optimal constants, we remark that

\begin{equation}
    K_1 \le 2 K_4 \le 2 \sqrt{\frac32} K_3 \le 4 \sqrt{\frac{3e}{2}} K_2 \le 12 \sqrt{\frac{3e}{2}} K_1
\end{equation}

\end{remark}

\begin{definition}[Sub-Gaussian Norm] \label{def:sub-Gaussian}
Define the sub-Gaussian norm $\|X\|_{\psi_2}$ of a centred sub-Gaussian random variable $X$ according to moment generating function,
\begin{equation*}
    \|X\|_{\psi_2} = \inf\left \{ K > 0 : \mathbb E[\exp(\lambda X)] \le \exp\left(\frac{K^2\lambda^2}{2} \right) \text{ for all } \lambda \in \mathbb{R}\right \}.
\end{equation*}
If $X$ is not centred then define $\|X\|_{\psi_2}^2 = \|X - \mathbb{E}[X]\|_{\psi_2}^2 + |\mathbb{E}[X]|^2$.
\end{definition}

For centred $X$, the inequality $\mathbb{E}[\mathrm{e}^{\lambda X}] \leq \mathrm{e}^{K^2\lambda^2/2}$ implies $1 + \frac{\lambda^2}{2}\mathbb{E}[X^2] + o(\lambda^2) \leq 1 + \frac{\lambda^2}{2}K^2 + o(\lambda^2)$, which, after dividing by $\lambda^2$ and sending $\lambda$ to zero implies
\begin{equation}\label{eqn:variancebound}
    \mathbb{E}[X^2] \leq K^2.
\end{equation}
For this reason, this form of sub-Gaussian norm is sometimes known as the \emph{variance proxy}.

\begin{proposition}
    Let $X \sim N(0,\sigma^2)$, let $Y \sim \mathrm{Rademacher}$ and let $Z$ with $\mathbb E[Z]=0$ take values in $[-a,a]$ almost surely. Then
    \begin{equation}
        \|X\|_{\psi_2} = \sigma, \quad  \|Y\|_{\psi_2} = 1,  \quad \text{ and } \quad \|Z\|_{\psi_2} \leq a.
    \end{equation}
\end{proposition}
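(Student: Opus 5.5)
The plan is to verify the three claims directly from Definition~\ref{def:sub-Gaussian}, computing or bounding the moment generating function $\mathbb E[e^{\lambda \cdot}]$ in each case. All three random variables are centred, so the centred form of the definition applies.

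\emph{Gaussian.} For $X\sim N(0,\sigma^2)$, completing the square in the Gaussian integral gives $\mathbb E[e^{\lambda X}]=e^{\sigma^2\lambda^2/2}$ exactly. Hence $K=\sigma$ is admissible, and $\|X\|_{\psi_2}\le\sigma$. For the reverse inequality we use the variance bound \eqref{eqn:variancebound}: any admissible $K$ satisfies $K^2\ge\mathbb E[X^2]=\sigma^2$. Therefore $\|X\|_{\psi_2}=\sigma$.

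\emph{Rademacher.} Here $\mathbb E[e^{\lambda Y}]=\cosh\lambda$. Comparing Taylor series termwise, $\cosh\lambda=\sum\lambda^{2n}/(2n)!\le\sum\lambda^{2n}/(2^n n!)=e^{\lambda^2/2}$, since $(2n)!\ge 2^n n!$. So $K=1$ is admissible. Conversely, \eqref{eqn:variancebound} gives $K^2\ge\mathbb E[Y^2]=1$, so $\|Y\|_{\psi_2}=1$.

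\emph{Bounded.} This is Hoeffding's lemma, and it is the only step with real content. We fix $\lambda$ and use convexity of $z\mapsto e^{\lambda z}$ on $[-a,a]$:
\[
e^{\lambda z}\le \frac{a-z}{2a}e^{-\lambda a}+\frac{a+z}{2a}e^{\lambda a}.
\]
Taking expectations and using $\mathbb E[Z]=0$ gives $\mathbb E[e^{\lambda Z}]\le\cosh(\lambda a)$. Applying the Rademacher bound with $\lambda$ replaced by $\lambda a$ then yields $\mathbb E[e^{\lambda Z}]\le e^{a^2\lambda^2/2}$. Hence $K=a$ is admissible and $\|Z\|_{\psi_2}\le a$.

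The symmetric interval $[-a,a]$ makes this easier than general Hoeffding. The convexity chord already reduces the bounded case to the Rademacher one, so there is no need for the usual log-MGF second-derivative argument. I expect no serious obstacle. The only care needed is to note that the infimum in the definition is attained or bounded appropriately, and the explicit admissible $K$ together with the variance lower bound handles this.
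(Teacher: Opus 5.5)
The paper states this proposition without proof, as one of the standard facts it collects at the start of that subsection, so there is no argument of the paper's to compare against. Your proof is correct and complete.

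\begin{itemize}
\item The upper bounds come from exhibiting an admissible $K$ in Definition~\ref{def:sub-Gaussian}.
\item The matching lower bounds for $X$ and $Y$ follow correctly from \eqref{eqn:variancebound}, since every admissible $K$ satisfies $K^2\ge\mathbb E[X^2]$.
\item For the bounded case, your convexity-chord reduction to $\cosh(\lambda a)\le e^{\lambda^2a^2/2}$ is simpler than the usual Hoeffding-lemma argument via the second derivative of the log-MGF. It works precisely because the interval is symmetric about the mean, and it still gives the sharp constant $a$, which is attained by $aY$.
\end{itemize}

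One thing to tidy up is the degenerate cases $a=0$ and $\sigma=0$. Your chord formula divides by $2a$, and the definition takes the infimum over $K>0$, so $K=a$ or $K=\sigma$ is not itself admissible there. In those cases, simply note that the variable is almost surely $0$, so every $K>0$ is admissible and the norm is $0$.
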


\begin{proposition}\label{prop:pythag} Suppose $X_i$ are independent and centred sub-Gaussian random variables. Then for a countable index set $I$, 
    \begin{equation}
        \left \|\sum_{i\in I} X_i\right \|_{\psi_2}^2 \le \sum_{i\in I} \|X_i\|_{\psi_2}^2.
    \end{equation}
\end{proposition}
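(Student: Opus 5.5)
The plan is to work directly with the moment generating function characterisation in Definition~\ref{def:sub-Gaussian}. First I would handle a finite index set $I=\{1,\dots,n\}$. Write $K_i=\|X_i\|_{\psi_2}$ and fix $\epsilon>0$. By the definition of the infimum, for each $i$ we have $\mathbb E[\exp(\lambda X_i)]\le \exp((K_i+\epsilon)^2\lambda^2/2)$ for all $\lambda$. The MGF inequality is monotone in $K$, so the infimum is essentially attained up to $\epsilon$. Independence factorises the MGF of the sum:
\begin{equation*}
\mathbb E\Big[\exp\Big(\lambda\sum_{i\le n}X_i\Big)\Big]=\prod_{i\le n}\mathbb E[\exp(\lambda X_i)]\le \exp\Big(\frac{\lambda^2}{2}\sum_{i\le n}(K_i+\epsilon)^2\Big).
\end{equation*}
The sum is centred, so this shows $\|\sum_{i\le n}X_i\|_{\psi_2}^2\le\sum_{i\le n}(K_i+\epsilon)^2$. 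Letting $\epsilon\to0$ gives the finite case.

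For countably infinite $I$, I may assume $\sum_i K_i^2<\infty$, since otherwise there is nothing to prove. I also need to make sense of $\sum_i X_i$. By \eqref{eqn:variancebound}, $\mathbb E[X_i^2]\le K_i^2$. The $X_i$ are independent and centred, hence orthogonal in $L^2(\mathbb P)$, so the partial sums $S_n$ are Cauchy in $L^2(\mathbb P)$ and converge to some $S$; I take this $L^2$ limit as the meaning of the sum, consistent with how infinite sums of random variables are interpreted in Proposition~\ref{prop:properties_of_gensketch}. Passing to a subsequence, $S_{n_j}\to S$ almost surely. Then for each fixed $\lambda$, Fatou's lemma gives
\begin{equation*}
\mathbb E[\exp(\lambda S)]\le\liminf_j \mathbb E[\exp(\lambda S_{n_j})]\le \exp\Big(\frac{\lambda^2}{2}\sum_{i\in I}K_i^2\Big),
\end{equation*}
using the finite case and monotonicity of the partial sums of $K_i^2$. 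Since $S$ is centred (as an $L^2$ limit of centred variables), this gives the claim.

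The main obstacle is this infinite case, specifically making sure the limit object is well defined and that the MGF bound survives the limit. Fatou applied along an almost surely convergent subsequence handles this cleanly because $\exp(\lambda\cdot)$ is nonnegative and continuous. The finite case is routine once the near-attainment of the infimum is noted.
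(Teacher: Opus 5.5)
Your proof is correct. The paper states this proposition without proof, and its only comment (that the constant $1$ relies on the MGF form of Definition~\ref{def:sub-Gaussian}) points to exactly your argument: factorise the MGF by independence and pass $\epsilon\to0$.

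Your countably-infinite step fills in something the paper leaves implicit but relies on when bounding $\|\sum_i \ess_i\langle x,p_i\rangle\|_{\psi_2}$: you interpret the sum as an $L^2(\mathbb P)$ limit and apply Fatou along an a.s.\ convergent subsequence. One point to tighten there. Fatou needs the pointwise bound $\mathbb E[e^{\lambda S_n}]\le \exp\big(\tfrac{\lambda^2}{2}\sum_{i\le n}K_i^2\big)$, not just the norm inequality $\|S_n\|_{\psi_2}^2\le\sum_{i\le n}K_i^2$. You get this bound by letting $\epsilon\to0$ at fixed $\lambda$ in your finite-case estimate.
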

If we followed a different property in Proposition~\ref{prop:subg_equivalences} to define a sub-Gaussian random variable, then Proposition~\ref{prop:pythag} would only be true up to a constant multiple.

\begin{definition}[Sub-Gaussian Vectors and Sketch Functionals]\label{def:subg-vec-sketches} A sub-Gaussian vector is a random vector $\omega$ such that $\|\langle \omega, x \rangle\|_{\psi_2} \le K$ for all $\|x\|=1$.
    The sub-Gaussian vector norm is the infimum over all $K$ satisfying the above. Similarly, a sub-Gaussian sketch functional $\ess$ is one such that $\|\ess(x)\|_{\psi_2} \le K$ for all $\|x\|=1$, and the sub-Gaussian norm is the infimum over all $K$ satisfying the above.
\end{definition}
\begin{proposition}
    Take $\ess_i$ to be independent random variables with sub-Gaussian norm $K$ and unit variance. Then for any choice of orthonormal basis $\{p_i\}$, the sketch functional defined by $\ess(x) = \sum_i \ess_i \langle x, p_i\rangle$ is an isotropic sub-Gaussian sketch functional with sub-Gaussian norm at most $K$ as in Definition \ref{def:subg-vec-sketches}.
    \begin{proof}
       Since each $\ess_i$ is centred, $\ess(x)$ is centred for any $x$ and
       \begin{align*}
           \mathbb E[\ess(x)^2] &= \mathbb E\left[ \left(\sum_i \ess_i \langle x,p_i \rangle \right)^2 \right] \\ 
           &= \left \|  \sum_i \ess_i \langle x,p_i \rangle \right \|^2_{L^2(\mathbb P)}\\
           &= \sum_i \langle x,p_i \rangle^2 = \|x\|^2_{\mathcal H},
       \end{align*}
       where the third equality is because the $\ess_i$ form an orthonormal set in the Hilbert space $L^2(\mathbb P)$ and the fourth is because the $p_i$ form an orthonormal basis for $\mathcal H$. Thus $\mathbb E[\ess(x)^2]= x^T I_{\mathcal H}x$ and $\ess$ is isotropic. 
        For the sub-Gaussian norm, with any $\|x\| = 1$:
        \begin{align*}
            \| \ess(x)\|_{\psi_2}^2 &= \left \| \sum_i   \ess_i \langle x,p_i\rangle\right \|_{\psi_2}^2\\
            & \le \sum_i \big \| \ess_i \langle x, p_i\rangle\big \|_{\psi_2}^2\\
            &= \sum_i \langle x,p_i\rangle^2 \|\ess_i\|_{\psi_2}^2\\
            &=K^2 \sum_i \langle x,p_i \rangle^2 = K^2 \text{.}
        \end{align*}
        The inequality is by Proposition \ref{prop:pythag}.
    \end{proof}
\end{proposition}

\begin{example}[Isotropic Rademacher Sketch]
    Let $\ess_i \overset{iid}{\sim} U(\{-1,+1\})$. Let $p_i$ be an orthonormal basis for $\mathcal H$ and let $\ess \in \mathcal H^\#$ be defined by $\ess(x)= \sum_i \ess_i \langle x,p_i\rangle_{\mathcal H}$. Then $\ess$ is a sub-Gaussian sketch functional with sub-Gaussian norm $1$, since $\|\ess_i\|_{\psi_2}=1$. 
\end{example}

\subsection{Sub-Gaussian Processes}

A Gaussian process is usually defined as a stochastic process $X$ such that for all $t_1, \cdots, t_n$, $\big(X(t_1),\cdots, X(t_n)\big)$ is jointly Gaussian. Despite this looking different to Definition~\ref{def:gaussian_skfun}, it is common to refer to infinite-dimensional Gaussian random vectors as Gaussian processes. This is partly due to the Karhunen--Lo\`{e}ve theorem, which shows that over $L^2(I)$ the terms are equivalent for trace-class covariance operators.

\begin{theorem}[\cite{alexanderian2015brief}]
        Let $c(t,s)$ be a continuous covariance kernel on $I \times I$ for a compact interval $I \subset \mathbb R$ such that the operator $Cf(s) = \int f(t)c(t,s)\,\mathrm dt$ is a trace-class SPSD operator on $L^2(I)$. Let $C = \sum \lambda_ip_ip_i^T$ be an orthonormal eigendecomposition. Then $X$ is a mean-square continuous Gaussian process with covariance kernel $c$ and continuous expectation $\mu(t) = \mathbb E[X(t)]$ if and only if $X$ has the representation $X = \mu + \sum \lambda_i^{1/2} x_ip_i$ where $x_i \overset{iid} \sim \mathcal N(0,1)$, and the series converging in $L^2(\mathbb P)$ uniformly in $t$.
\end{theorem}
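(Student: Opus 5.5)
The plan is to reduce everything to Mercer's theorem for the continuous SPSD kernel $c$. Mercer's theorem gives that the eigenfunctions $p_i$ with $\lambda_i>0$ are continuous and that
\begin{equation*}
c(t,s)=\sum_i \lambda_i p_i(t)p_i(s)
\end{equation*}
with absolute convergence, uniform on $I\times I$. In particular $\sum_{i\le n}\lambda_i p_i(t)^2 \to c(t,t)$ uniformly in $t$. Every $L^2(\mathbb P)$ estimate below reduces to this uniform convergence. Throughout, set $Y = X-\mu$.

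For the forward direction, assume $X$ is a mean-square continuous Gaussian process with covariance kernel $c$.
\begin{enumerate}
\item For each $i$ with $\lambda_i>0$, define $x_i = \lambda_i^{-1/2}\int_I Y(t)p_i(t)\,\mathrm dt$. This is an $L^2(\mathbb P)$-valued Riemann integral, well defined because $t\mapsto Y(t)$ is continuous into $L^2(\mathbb P)$.
\item Exchange expectation and integration (Fubini, justified by continuity of $c$ on the compact set $I\times I$). Using the eigen-equation $Cp_j=\lambda_j p_j$, this gives
\begin{equation*}
\mathbb E[Y(t)x_j]=\lambda_j^{1/2}p_j(t), \qquad \mathbb E[x_ix_j]=\delta_{ij}.
\end{equation*}
\item Each $x_i$ is an $L^2(\mathbb P)$ limit of finite linear combinations of the jointly Gaussian family $\{Y(t)\}$. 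Hence any finite collection of the $x_i$, together with the $Y(t)$, is jointly Gaussian, since $L^2$ limits of centred Gaussian vectors are Gaussian (characteristic functions converge).
\item Jointly Gaussian and uncorrelated implies independent, so $x_i \overset{iid}{\sim}\mathcal N(0,1)$. If $\lambda_i = 0$, the term is irrelevant, exactly as in Proposition~\ref{prop:properties_of_gensketch}.
\item Expanding the square and using step 2,
\begin{equation*}
\mathbb E\Big[\Big|Y(t)-\sum_{i\le n}\lambda_i^{1/2}x_ip_i(t)\Big|^2\Big] = c(t,t)-\sum_{i\le n}\lambda_i p_i(t)^2 .
\end{equation*}
By Mercer this tends to $0$ uniformly in $t$, which proves the representation.
\end{enumerate}

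For the converse, assume $X=\mu+\sum_i\lambda_i^{1/2}x_ip_i$ with $x_i$ i.i.d.\ standard normal.
\begin{enumerate}
\item The partial sums $S_n(t)$ are Gaussian. Each vector $(S_n(t_1),\dots,S_n(t_m))$ is jointly Gaussian, and $L^2$ limits preserve this, so $X$ is a Gaussian process.
\item By independence, $\mathbb E[S_n(t)S_n(s)]=\sum_{i\le n}\lambda_ip_i(t)p_i(s)$. Passing to the limit using $L^2$ convergence and Mercer identifies the covariance kernel as $c$, and the mean as $\mu$.
\item Mean-square continuity follows from
\begin{equation*}
\mathbb E|Y(t)-Y(s)|^2=c(t,t)-2c(t,s)+c(s,s)
\end{equation*}
together with continuity of $c$; continuity of $\mu$ is given.
\end{enumerate}

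The main obstacle is the analytic input. The first part is Mercer's theorem itself, namely the uniform convergence of the eigen-expansion and the continuity of the eigenfunctions; I would take it as known, citing the standard proof via Dini's theorem applied to the nonnegative continuous remainders $c(t,t)-\sum_{i\le n}\lambda_ip_i(t)^2$. The second part is justifying the Fubini exchange in the cross-covariance computation $\mathbb E[Y(t)x_j]$. Everything else is bookkeeping with Gaussian closure under $L^2$ limits.
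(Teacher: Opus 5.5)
Your proposal is correct and is essentially the standard Mercer-based argument of the cited source. The paper itself gives no proof and simply quotes \cite{alexanderian2015brief}, whose proof likewise projects the centred process onto the eigenfunctions, computes $\mathbb E[Y(t)x_j]=\lambda_j^{1/2}p_j(t)$ and $\mathbb E[x_ix_j]=\delta_{ij}$, and reduces the uniform $L^2(\mathbb P)$ error to the Mercer remainder $c(t,t)-\sum_{i\le n}\lambda_ip_i(t)^2$.

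Two small remarks. First, because you define $x_i$ as an $L^2(\mathbb P)$-valued Riemann integral, the exchange in your step~2 is just continuity of the bounded functional $Z\mapsto\mathbb E[Y(t)Z]$ applied to Riemann sums. No Fubini argument, and hence no joint measurability of $(t,\omega)\mapsto Y(t,\omega)$, is needed, so the second obstacle you flag does not arise in your own setup. Second, for indices with $\lambda_i=0$ you may need to enlarge the probability space to adjoin independent $\mathcal N(0,1)$ variables, or else read the representation as running only over $\lambda_i>0$.
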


To be clear, if $X$ is continuous, then this gives an explicit basis expansion for $X$ as a Gaussian random vector. In the other direction, an explicit basis expansion determines the pointwise distribution of $X(t)$ which is also Gaussian. This raises a question whether there is such an equivalence in the sub-Gaussian case. In particular, are random processes which are sub-Gaussian pointwise also sub-Gaussian random vectors in the sense of Definition~\ref{def:subg-vec-sketches}? What about the converse?

\begin{proposition}
    Let $I=[a,b]$ be a finite interval and $\omega$ be a centred stochastic process taking values in $L^2(I)$ which is continuous almost surely and $\|\omega(x)\|_{\psi_2} \le K$ for any $x \in I$. Then for any $f \in L^2(I)$, $ \|\langle \omega, f\rangle\|_{\psi_2} \le K \sqrt{b-a}\|f\|_{L^2}$.
\end{proposition}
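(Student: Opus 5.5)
The plan is to work directly with the moment generating function characterisation in Definition~\ref{def:sub-Gaussian}, together with Jensen's inequality applied to a normalised integral. Fix $f \in L^2(I)$ with $f \neq 0$; the case $f=0$ is trivial. Since $\omega$ is almost surely continuous on the compact interval $I$, the pairing
\[
\langle \omega, f\rangle = \int_a^b \omega(x) f(x)\,\mathrm dx
\]
is a well-defined random variable. It is centred by Fubini: $\mathbb E|\omega(x)|$ is uniformly bounded in $x$, since $\mathbb E[\omega(x)^2] \le K^2$ by~\eqref{eqn:variancebound}, and $f \in L^1(I)$.

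The key step is to write the integral as an expectation against a probability measure. Set $\mu(\mathrm dx) = |f(x)|\,\mathrm dx / \|f\|_{L^1}$ and $s(x) = \operatorname{sign} f(x)$. Then
\[
\lambda \langle \omega, f\rangle = \int \lambda \|f\|_{L^1}\, s(x)\,\omega(x)\,\mu(\mathrm dx).
\]
By convexity of $\exp$ and Jensen's inequality with respect to $\mu$, pathwise,
\[
e^{\lambda\langle \omega,f\rangle} \le \int \exp\bigl(\lambda\|f\|_{L^1} s(x)\,\omega(x)\bigr)\,\mu(\mathrm dx).
\]
Taking expectations and using Tonelli, which is valid since the integrand is nonnegative and jointly measurable by continuity of paths, we obtain
\[
\mathbb E\bigl[e^{\lambda\langle\omega,f\rangle}\bigr] \le \int \mathbb E\bigl[e^{\lambda\|f\|_{L^1}s(x)\omega(x)}\bigr]\,\mu(\mathrm dx) \le \exp\bigl(K^2\lambda^2\|f\|_{L^1}^2/2\bigr).
\]
The second inequality uses the pointwise bound $\|\omega(x)\|_{\psi_2}\le K$, applied at parameter $\pm\lambda\|f\|_{L^1}$. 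This shows $\|\langle\omega,f\rangle\|_{\psi_2} \le K\|f\|_{L^1}$. Cauchy--Schwarz then gives $\|f\|_{L^1} \le \sqrt{b-a}\,\|f\|_{L^2}$, which finishes the proof.

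The main obstacle is technical rather than conceptual. We must justify the measurability needed for Tonelli and the pointwise centredness $\mathbb E[\omega(x)]=0$, which is the meaning of ``centred process''. We also need the fact that the infimum in Definition~\ref{def:sub-Gaussian} is attained or approached, so that the pointwise bound with constant $K$ holds for every $\lambda$. If it is only approached, we apply the argument with $K+\varepsilon$ and let $\varepsilon\to0$. Continuity of the paths handles joint measurability of $(x,g)\mapsto\omega(x)(g)$.
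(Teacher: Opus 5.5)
Your proof is correct, and it takes a different and tidier route than the paper.

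The paper's argument runs as follows.
\begin{enumerate}
\item For continuous $f$, it writes $\langle\omega,f\rangle$ as a limit of Riemann sums $\sum_i f(x_i)\omega(x_i)\delta x$.
\item It bounds the $\psi_2$ norm of each sum by $\sum_i |f(x_i)|K\delta x$, using the triangle inequality for the variance-proxy norm without stating it.
\item It lets the mesh shrink to obtain $K\|f\|_{L^1}$, then applies Cauchy--Schwarz.
\item It extends to general $f\in L^2(I)$ by approximating with continuous $f_n$.
\end{enumerate}
Each of the two limit passages tacitly needs the $\psi_2$ bound to survive almost-sure convergence, for example via Fatou applied to $\mathbb E\bigl[e^{\lambda X_n}\bigr]$. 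The paper does not spell this out.

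Your Jensen step, with the probability measure $|f|\,\mathrm dx/\|f\|_{L^1}$, is the integral form of the convexity argument behind that triangle inequality. It gives $\|\langle\omega,f\rangle\|_{\psi_2}\le K\|f\|_{L^1}$ for every $f\in L^1(I)$ in one step, with no discretisation and no density argument. The only technical inputs are joint measurability for Tonelli and pathwise integrability of $\omega f$. Both come from continuity of paths, so your argument uses continuity only for these, not for convergence of Riemann sums.

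The paper's version has the appeal of reducing everything to finite sums. Yours is shorter and avoids the unjustified limit exchanges.

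Your $K+\varepsilon$ detour is harmless but unnecessary. The infimum in the $\psi_2$ definition is attained, because the mgf inequality passes to the limit as $K$ decreases to the infimum.
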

\begin{proof}
 If $f$ is continuous and bounded on $I$, then $\langle \omega, f\rangle=\int_a^b \omega(x) f(x) \, \mathrm{d}x$ is a limit of Riemann sums, $\sum_i \omega(x_i)f(x_i) \delta x$, each of which is sub-Gaussian as a finite sum of sub-Gaussian random variables. Further,
    \begin{align*}
        \left \| \sum_i f(x_i) \omega(x_i) \delta x\right \|_{\psi_2} \le \sum_i |f(x_i)| K \delta x \overset{\delta x \to 0}\longrightarrow K \|f \|_{L^1}
    \end{align*}
    and so the limit is also sub-Gaussian with sub-Gaussian norm at most $K \|f\|_{L^1}$. To arrive at the stated inequality, note that $L^2(I) \subset L^1(I)$ and
    \begin{align*}
        \|f\|_{L^1}
        &= \int_I f\,\mathrm{sign}(f)\\
        &\le \|\mathrm{sign}(f)\|_{L^2}\|f\|_{L^2}\\
        &\le \sqrt{b-a}\|f\|_{L^2}.
    \end{align*}

    If $f$ is not continuous, take a continuous sequence $f_n \overset{L^2}{\to} f$ with $\|f_n\|_{L^2} \le \|f\|_{L^2}$. Then $\langle \omega, f_n\rangle \to \langle \omega,f\rangle_{L^2}$, and each $\langle \omega,f_n\rangle_{L^2}$ has sub-Gaussian norm at most $K \sqrt{b-a}\|f\|_{L^2}$. Therefore, $\|\langle \omega, f\rangle_{L^2}\|_{\psi_2} \le K \sqrt{b-a}\|f\|_{L^2}$ also.
\end{proof}

In particular, this means that $\omega$ is a sub-Gaussian random vector in the sense of Definition~\ref{def:subg-vec-sketches} with sub-Gaussian norm at most $K \sqrt{b-a}$.

Now, is it the case that sub-Gaussian random vectors in the sense of Definition~\ref{def:subg-vec-sketches} are sub-Gaussian pointwise? Not necessarily; let $X \sim \mathrm{Exp}(1)$, and define $\omega(x)= X\mathbf 1_{X \le 1/x}$ for $x>0$ and $\omega(0)=X$. Then $\omega(x)$ is an element of $L^2([0,\infty))$ and
$$
\| \omega\|_{L^2}^2 = \int_0^\infty X^2 \mathbf 1_{X \le 1/x} \mathrm d x = X^2\int_0^{1/X}\mathrm dx = X.
$$
Therefore for all $\|f\|_{L^2}=1$, 
$$
|\langle \omega,f\rangle |\le \| \omega \|_{L^2}\|f\|_{L^2} = X^{1/2}
$$
by the Cauchy--Schwarz inequality. Since $X$ is exponential and in particular sub-exponential, $X^{1/2}$ is sub-Gaussian~\cite{vershynin2012introduction}, and hence so is $\langle \omega, f\rangle$. Since this holds for all unit $f$, $\omega$ has finite sub-Gaussian norm in the sense of Definition~\ref{def:subg-vec-sketches}. However, $\omega(0)=X$ is exponentially distributed, which is not sub-Gaussian. In fact, while $\omega(x)$ is bounded for every $x>0$, we have $\| \omega(x)\|_{\psi_2} = O(x^{-1/2})$ as $x \to 0$.

Sketch functionals are also naturally linked to stochastic processes via stochastic calculus. While this is not the primary focus of this paper, we would like to note the following.

\begin{remark}
    Let $B(t)$ be standard Brownian motion. Define $\ess(f) = \int_0^\infty f(t)dB(t)$ according to the Wiener integral~\cite{nualart2006malliavin}. Then $\ess$ is an isotropic Gaussian sketch: for all $f$, $\ess(f)$ is a centred Gaussian random variable with variance $\|f\|^2_{L^2}$.
\end{remark} 

Again, what happens in the non-Gaussian case? Once more, Brownian motion (the Gaussian case) has some remarkably useful properties that simply do not hold more generally, but this is still an interesting question.

\begin{definition}{(Stochastic Riemann Integral).}\label{def:stochastic_integral}
    Let $f:I \to \mathbb R$ and $X(t),t \in  I$ be a stochastic process. Whenever the limit exists taking the mesh size $\delta t \to 0$ of a mesh for $I$, define
    $$
    \int_I  f(t) dX(t) = \lim_{\delta t \to 0}\sum_{i} f(t_i)(X_{t_i+1}-X_{t_i}),
    $$
    where the convergence is in $L^2(\mathbb P)$.
\end{definition}

\begin{proposition}\label{prop:skfun_integration}
    Let $0 < a < \infty$ and $f\in L^2([0,a] )$ be Riemann integrable. Let $\ess$ be a centred sketch functional on $L^2([0,a])$ with bounded covariance operator $C$. Define the stochastic process $X(t)=\ess( \mathbf1_{[0,t)})$ for $t \le a$. Then $X$ is mean-square continuous and the limit in Definition~\ref{def:stochastic_integral} exists, with convergence in $L^2(\mathbb P)$. Further,
    $$ 
        \int_0^a f(t)\mathrm dX(t) = \ess(f).
    $$
    \begin{proof}
The fact that $X$ is mean-square continuous follows from its covariance operator being bounded:
\begin{equation*}
    \mathbb E[|X(t+u)-X(t)|^2]=\mathbb E[\ess(\mathbf1_{[t,t+u)})^2]= \|C^{1/2}\mathbf 1_{[t,t+u)}\|^2_{L^2} \le u\|C\|_2\text{.}
\end{equation*}
So, $\mathbb E[|X(t+u)-X(t)|^2] \to 0$ as $u \to 0$. 

For the latter part of the proposition, we have that

\begin{align*}
    \lim_{\delta t \to 0} \sum_{i} f(t_i)(X_{t_i+1}-X_{t_i}) &= \lim_{\delta t \to 0} \sum_{i} f(t_i)\ess(\mathbf1_{[t_i,t_i+1)}) \\
    &= \lim_{\delta t \to 0} \sum_{i} \ess(f(t_i) \mathbf 1_{[t_i,t_{i+1})})\\
    &= \lim_{\delta t \to 0}  \ess\left(\sum_{i} f(t_i)\mathbf 1_{[t_i,t_{i+1})}\right)\\
    &= \ess\left(\lim_{\delta t \to 0} \left [\sum_{i} f(t_i)\mathbf 1_{[t_i,t_{i+1})}\right ]\right)
\end{align*}
and the inner limit converges to $f$ as an element of $L^2([0,a])$. The claim follows as $\ess$ is a continuous map from $L^2([0,a])$ to $L^2(\mathbb P)$.
    \end{proof}
\end{proposition}

This shows that in some way sketch functionals with bounded covariance operators can be realised as integrals with respect to a mean square continuous stochastic process. This is not a particularly strong result --- mean-square continuous processes aren't even guaranteed to have continuous sample paths --- but we can't hope for much stronger. For example, even well-behaved sketch functionals like the Rademacher sketch usually yield $X(t)$ that fail to be local martingales, and the rich theory of It\^{o} calculus simply does not apply. In the sub-Gaussian case however, sub-Gaussian sketch functionals do correspond to pointwise sub-Gaussian processes; $X_t=\ess(\mathbf 1_{[0,t)})$ is sub-Gaussian with norm at most $K\sqrt{t}$. Note that $X_t$ differs from point values of $\omega$, so there is no contradiction with the discussion above. In Figure~\ref{fig:sample_paths}, we compare the behaviour of some of these processes for different sketch distributions. 

\begin{figure}[htbp]
    \centering
    \includegraphics[width=0.5\linewidth]{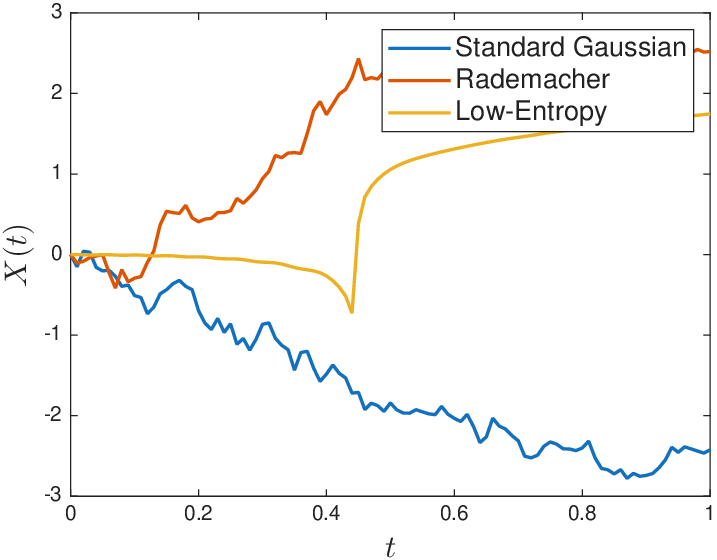}
    \caption{Sample paths of $X(t)$ defined in Proposition~\ref{prop:skfun_integration} for a variety of sketch functionals. When standard Gaussian random variables are used, the result is genuine Brownian motion. Instead using iid Rademacher coefficients, the result is a pointwise sub-Gaussian process with $\|X(t)\|_{\psi_2} \le \sqrt t$ that resembles Brownian motion but isn't. The ``low-entropy sketch'' is an example of an isotropic distribution that depends only on a one-dimensional random variable, defined in Section~\ref{subsection:discrete_schrod}.}
    \label{fig:sample_paths}
\end{figure}

\subsection{Low-Rank Operator Approximation Results}\label{subsection:mainresults}

At this point, we have all the necessary definitions to state the main results of this paper, which are tail bounds on the error of the infinite-dimensional randomised SVD used with sub-Gaussian sketches. The proofs refer to some results to be proven in the next subsection.

Similar to the Gaussian case, let's start with isotropic sketches and extend.

\begin{theorem} \label{theorem:subgaussian_maintheorem}
    Let $A: \mathcal H \to \mathcal K$ be the Hilbert--Schmidt target operator between separable Hilbert spaces $\mathcal H, \mathcal K$ and having singular value decomposition $U \Sigma V^T$. Sample $\ell$ independent isotropic sketch functionals with sub-Gaussian norm at most $K$ (Definition~\ref{def:subg-vec-sketches}) and let $\hat A$ be the output of Algorithm~\ref{alg:boullealg} applied with this sketch. 
    
    Let $u,t \ge 0$ be user-chosen parameters. Take $a,b$ given in Remark~\ref{rem:tailbound_constants}, and let $k<\ell$ be such that $D=\sqrt \ell - K \sqrt k - a-bt > 0$. Adopt the notation of Lemma~\ref{lemma:inf_dim_structural_bound} for $\Sigma_\perp$, $\Ess_1$ and $\Ess_2$. Then, with probability at least $1-\exp\big((\ell/6)(-u+1/2)\big)-\exp(-t^2/2)$,

\begin{equation}
    \|A - \hat A\|_{HS}^2 \le \|\Sigma_\perp\|_{HS}^2\left(1+\frac{\ell(1+K^2u)}{(\sqrt \ell - K\sqrt k - a-bt)^2} \right).
\end{equation}

\begin{proof}
    It is sufficient to bound $\| \Sigma_\perp \Ess_2 \Ess_1^\dagger\|_{HS}^2 \le \| \Sigma_\perp \Ess_2\|_{HS}^2 \| \Ess_1^{\dagger}\|^2_{2}$. We bound the factors separately.

    \underline{Step 1: Bounding $\| \Sigma_\perp \Ess_2\|_{HS}$.}
    
We apply Theorem~\ref{theorem:hs_bound} to $\Sigma_\perp V_\perp^T \Ess$. In this case, the covariance operator is $C= \Sigma_\perp V_\perp^TV_\perp \Sigma_\perp$ so that $\tr(C)=\|\Sigma_\perp\|_{HS}^2$, and
\begin{align*}
    \Sigma_\perp V_\perp^T \Ess &= \Sigma_\perp \big( \ess(v_{k+1}), \ess(v_{k+2}), \dots \big)\\
    &= \sum \sigma_{k+i} \ess(v_{k+i})e_i
\end{align*}
where $e_i$ is the natural basis of $\ell^2(\mathbb R)$. This matches the required expansion form as $\|\ess(v_j)\|_{\psi_2}\le K$ for all $j$ by assumption.

So, with probability at least $1-\exp\big((\ell/6)(-u+1/2)\big)$, we have
\begin{equation*}
    \|\Sigma_\perp \Ess_2\|_{HS}^2 \le \ell \|\Sigma_\perp\|_{HS}^2(1+K^2u).
\end{equation*}
    
    \underline{Step 2: Bounding $\|\Ess_1^\dagger\|_2$.}
    
    The columns of $\Ess_1$, which we remember is $k \times \ell$, have covariance matrix $V_k ^T \mathbb I_{\mathcal H} V_k= \mathbb I_k$ and sub-Gaussian norm at most $K$. So, applying Theorem~\ref{theorem:twosided} to $\Ess_1^T$, we have with probability at least $1-\exp(-t^2/2)$ and $a,b$ given in Remark~\ref{rem:tailbound_constants},
    \begin{align*}
        \sigma_{\min}( \Ess_1) &\ge \sqrt{\ell
        } - K \sqrt k - a-bt\text{,}
    \end{align*}
    and hence
    \begin{equation*}
        \|\Ess_1^\dagger\|_2^2 \le \frac{1}{(\sqrt \ell - K \sqrt k - a - bt)^2}\text{.}
    \end{equation*}

So, by sub-multiplicativity and a union bound, we have that with probability at least $1-\exp\big((\ell/6)(-u+1/2)\big)-\exp(-t^2/2)$,

\begin{equation*}
    \| \Sigma_\perp \Ess_2 \Ess_1^\dagger \|_{HS}^2 \le \frac{\ell\|  \Sigma_\perp\|_{HS}^2(1+K^2u)}{(\sqrt \ell - K \sqrt k - a - bt)^2}.
\end{equation*}

Then bound $\|A - \hat A\|_{HS}^2 \le \| \Sigma_\perp \|^2+\|\Sigma_\perp \Ess_2 \Ess_1^\dagger\|^2$ by Lemma~\ref{lemma:inf_dim_structural_bound}.
\end{proof}
\end{theorem}

\begin{remark}\label{remark:horrific_k_dependence}
    The oversampling parameter $\ell/k$ in Theorem~\ref{theorem:subgaussian_maintheorem} is governed by the denominator $D$, inherited from Theorem~\ref{theorem:twosided}. Theorem~\ref{theorem:twosided} implies that $\ell/k$ must be greater than $K^2$ for large $k$ and $\ell$ to ensure that $\Ess_1$ is well-conditioned with high probability. This is a significant difference from the Gaussian case, where $\ell/k$ can be arbitrarily close to $1$. It was proved in \cite{webb2026sharp} that within this general matrix family the $K^2$ oversampling requirement is sharp.
\end{remark}

The most comparable result in the literature is from Saibaba--Mi{\k{e}}dlar~\cite{saibaba2025randomized}, which also treats subspace iteration and a wider variety of distributions including those with bounded second moments. Most of their results are dependent on the ambient dimension and hence would not be valid in our setting. However, they do give the following dimension-independent result for sub-Gaussian sketch vectors:

\begin{theorem}
    Let $A \in \mathbb R^{m \times n}$ with SVD $U \Sigma V^T$, and let $\varepsilon, \delta$ be user-chosen parameters. Let $k$ be the target rank and let
    $$
    \ell = \frac{C_{CS}K^4}{\varepsilon^2}\left(\sqrt{k} + \sqrt{\log\frac 4 \delta}\right)^2.
    $$ 
    Let $\Ess \in \mathbb R^{n \times \ell}$ be a random matrix whose columns are independent isotropic sub-Gaussian random vectors with sub-Gaussian norm at most $K$, where 
    
    Let $\hat A$ be the output of Algorithm~\ref{alg:hmt} applied to $A$ with the sketch $\Ess$. Then with failure probability at most $\delta$,

    $$
    \| A - \hat A\|_2^2 \le \|\Sigma_\perp\|_2^2\left(1 + \frac{\big(\sqrt \ell + C_{CB}K^2 ( \frac{\| \Sigma_\perp\|_{F}}{\| \Sigma_\perp\|_2} + \sqrt{\log \frac 4 \delta})\big)^2}{(1-\varepsilon)^2\ell}\right),
    $$
    where $C_{CS}, C_{CB}$ are both constants and $\| \cdot \|_{F}$ denotes the Frobenius norm.
\end{theorem}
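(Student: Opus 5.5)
The plan follows the standard randomised SVD template, working in the spectral norm rather than the Hilbert--Schmidt norm. I start from the structural bound of Lemma~\ref{lemma:inf_dim_structural_bound} in the spectral norm:
\[
\|A-\hat A\|_2^2 \le \|\Sigma_\perp\|_2^2 + \|\Sigma_\perp \Ess_2 \Ess_1^\dagger\|_2^2 .
\]
Submultiplicativity then gives
\[
\|\Sigma_\perp \Ess_2\Ess_1^\dagger\|_2^2 \le \|\Sigma_\perp\Ess_2\|_2^2\,\|\Ess_1^\dagger\|_2^2 ,
\]
so it suffices to bound the two factors separately and combine them by a union bound, splitting the failure probability $\delta$ into two parts of size $\delta/2$. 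The ambient dimension $n$ never appears. This is precisely why the result transfers verbatim to infinite dimensions once the sketch is interpreted as a sketch functional quasimatrix.

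\textbf{Bounding $\|\Ess_1^\dagger\|_2$.} The matrix $\Ess_1^T=\Ess^TV_k$ is $\ell\times k$ with independent rows. Each row is isotropic, since its covariance is $V_k^TV_k=I_k$, and has sub-Gaussian norm at most $K$. I will invoke a two-sided singular value estimate for tall matrices with independent isotropic sub-Gaussian rows, i.e.\ the covariance-estimation form (Theorem~\ref{theorem:twosided}, in Vershynin's form
\[
\|\tfrac1\ell \Ess_1\Ess_1^T - I_k\|_2\le C K^2\Bigl(\sqrt{k/\ell}+\sqrt{\log(4/\delta)/\ell}\Bigr)
\]
when this is at most $1$). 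The choice $\ell = C_{CS}K^4\varepsilon^{-2}(\sqrt k+\sqrt{\log(4/\delta)})^2$ makes the right-hand side at most $\varepsilon$ with probability $1-\delta/2$. Hence $\sigma_k(\Ess_1)^2\ge(1-\varepsilon)\ell$, which I would want to sharpen to $(1-\varepsilon)^2\ell$ by working with the singular values directly. This gives $\|\Ess_1^\dagger\|_2^2\le 1/((1-\varepsilon)^2\ell)$.

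\textbf{Bounding $\|\Sigma_\perp\Ess_2\|_2$.} This factor is the main obstacle, because it is a non-square sub-Gaussian matrix with row scalings, and I want a bound in terms of the stable-rank-like quantity $\|\Sigma_\perp\|_F/\|\Sigma_\perp\|_2$ rather than the dimension. Write $\Sigma_\perp\Ess_2 = \Sigma_\perp V_\perp^T\Ess$. Its $\ell$ columns are independent, and each has covariance $\Sigma_\perp^2$ and is sub-Gaussian. I would treat $\Ess^TV_\perp\Sigma_\perp$ as an $\ell\times\infty$ matrix $G\Sigma_\perp$, where $G$ has independent isotropic sub-Gaussian rows, and apply a Chevet-type or Talagrand majorising-measure bound for $\sup_{x\in B_{\mathbb R^\ell},\,y\in\Sigma_\perp B}\langle Gy,x\rangle$. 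The Gaussian width of the ellipsoid $\Sigma_\perp B$ is at most $\|\Sigma_\perp\|_F$, and the width of $B_{\mathbb R^\ell}$ is at most $\sqrt\ell$. Talagrand's comparison, with constants of order $K^2$ once the isotropy normalisation is accounted for, then gives with probability $1-\delta/2$
\[
\|\Sigma_\perp\Ess_2\|_2 \le \|\Sigma_\perp\|_2\Bigl(\sqrt\ell + C_{CB}K^2\bigl(\tfrac{\|\Sigma_\perp\|_F}{\|\Sigma_\perp\|_2}+\sqrt{\log(4/\delta)}\bigr)\Bigr).
\]

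Multiplying the two bounds and substituting into the structural bound yields the displayed inequality. The delicate points are obtaining the $\sqrt\ell$ leading term with constant one, which requires the sharp form of the comparison inequality rather than a crude $\varepsilon$-net argument, and carefully tracking the $K^2$ constants.
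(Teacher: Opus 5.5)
Your skeleton (spectral-norm structural bound, the split $\|\Sigma_\perp\Ess_2\Ess_1^\dagger\|_2\le\|\Sigma_\perp\Ess_2\|_2\|\Ess_1^\dagger\|_2$, and a union bound with $\delta/2$ per factor) matches what the paper says about this quoted, unproved result. Your $\Ess_1$ step works. No ``sharpening'' is needed there: $\sigma_k(\Ess_1)^2\ge(1-\varepsilon)\ell$ already implies $\sigma_k(\Ess_1)^2\ge(1-\varepsilon)^2\ell$. The square comes naturally from the singular-value form $\sigma_k(\Ess_1)\ge\sqrt\ell-CK^2(\sqrt k+\sqrt{\log(4/\delta)})\ge(1-\varepsilon)\sqrt\ell$.

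The genuine gap is the bound on $\|\Sigma_\perp\Ess_2\|_2$. Apply Talagrand's comparison, or a sub-Gaussian Chevet inequality, to the bilinear process $(x,y)\mapsto\langle G\Sigma_\perp y,x\rangle$. This yields only $\|\Sigma_\perp\Ess_2\|_2\le CK\big(\sqrt\ell\,\|\Sigma_\perp\|_2+\|\Sigma_\perp\|_F+u\|\Sigma_\perp\|_2\big)$, because the absolute comparison constant multiplies every term, including $\sqrt\ell\,\|\Sigma_\perp\|_2$. There is no ``sharp form'' of the comparison inequality with constant one for sub-Gaussian processes; the constant-one Chevet inequality rests on Sudakov--Fernique, which is Gaussian-specific. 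The surplus is fatal here. With the prescribed $\ell$, letting $\varepsilon\to0$ forces $\ell\to\infty$. The bound your route produces for $\|A-\hat A\|_2^2/\|\Sigma_\perp\|_2^2$ then tends to $1+C^2K^2$, whereas the stated bound tends to $2$. So you correctly locate the delicate point, but you do not supply the idea that resolves it.

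The missing ingredient is the matrix deviation inequality (Liaw--Mehrabian--Plan--Vershynin; see \cite[Ch.~9]{vershynin2018high}). Suppose $\Ess^T\in\mathbb R^{\ell\times n}$ has independent isotropic rows with sub-Gaussian norm at most $K$. Then for any $T\subset\mathbb R^n$, with probability at least $1-2e^{-u^2}$,
\[
\sup_{x\in T}\Big|\|\Ess^Tx\|_2-\sqrt\ell\,\|x\|_2\Big|\le CK^2\big(w(T)+u\,\mathrm{rad}(T)\big),\qquad w(T)=\mathbb E\sup_{x\in T}\langle g,x\rangle .
\]
Here one chains the \emph{centred} process $Z_x=\|\Ess^Tx\|_2-\sqrt\ell\,\|x\|_2$. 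Its increments are sub-Gaussian with constant $CK^2$, by concentration of $\|\Ess^Tx\|_2$ about $\sqrt\ell\,\|x\|_2$; this is where isotropy and the $K^2$ enter. As a result, the comparison constant never touches the $\sqrt\ell$ term.

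Now take $T=\{V_\perp\Sigma_\perp y:\|y\|_2\le1\}$. Then $\sup_{x\in T}\|\Ess^Tx\|_2=\|\Sigma_\perp\Ess_2\|_2$, $\mathrm{rad}(T)=\|\Sigma_\perp\|_2$, and $w(T)=\mathbb E\|\Sigma_\perp V_\perp^Tg\|_2\le\|\Sigma_\perp\|_F$. Choosing $u=\sqrt{\log(4/\delta)}$ gives exactly the bound you wrote, with failure probability $\delta/2$. The rest of your argument then goes through.
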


This result gives an explicit dependence on the failure probability $\delta$ and treats the spectral norm where we treat the Hilbert--Schmidt norm, whose finite-dimensional analogue is the Frobenius norm. However, it suggests the required sampling is $\ell = O(K^4 k)$ where we have managed $\ell = O(K^2 k)$ and relies on unspecified constants.

Our more detailed analysis has resolved the issue of unspecified constants, a question that~\cite{saibaba2025randomized} gives in the abstract. The underlying argument is the same, though; to bound $\| \Sigma_\perp \Ess_2 \Ess_1^\dagger\| \le \|\Sigma_\perp \Ess_2 \| \| \Ess_1^\dagger\|$ and deal with the terms separately. This disregards cross-cancellation, meaning our results share the limitation that the floor of our error bounds are $\| A - \hat A\| \le \sqrt 2 \|\Sigma_\perp \|$ as $\ell \to \infty$.

Contrast with the comparable Gaussian result of Kressner et al.\cite{KRESSNER2026259}, where if we take the oversampling parameter $p \to \infty$ then their bounds on $\| \Sigma_\perp \Ess_2 \Ess_1^\dagger\|_{HS}$ tend to $0$ rather than $\|\Sigma_\perp\|_{HS}$ as ours do. In the Gaussian case, it is possible to deal with $\Ess_2$ and $\Ess_1$ simultaneously as they are independent. This approach accounts for cross-cancellation between the two terms and predicts the correct behaviour in the large-$\ell$ limit. Existing bounds on the smallest singular values of sub-Gaussian random matrices are also worse than corresponding Gaussian results; where they are allowed $\ell=k+p$, the best existing guarantees for sub-Gaussian random matrices in the best-case $K=1$ require $\ell-k = O(k^{1/2})$ to leading order for $\Ess_1$ to be well-conditioned.

\begin{theorem} \label{theorem:subgaussian_cov}
 Let $A:\mathcal H \to \mathcal K$ be a Hilbert--Schmidt target operator with SVD $U \Sigma V^T$. Sample $\ell$ independent sketch functionals of the form $C^{1/2} \omega$ where $C$ is a bounded SPSD operator on $\mathcal H$ and $\omega$ are isotropic with sub-Gaussian norm at most $K$, and let $\hat A$ be the output of Algorithm~\ref{alg:boullealg} applied with this sketch. 
 
 Let $u,t \ge 0$ be user-chosen parameters. Take $a,b$ from Remark~\ref{rem:tailbound_constants} and let $k < \ell$ be such that $D>0$, as in Theorem~\ref{theorem:subgaussian_maintheorem}. Adopting the notation of Lemma~\ref{lemma:inf_dim_structural_bound}, assume $V_k^TC^{1/2}$ has rank $k$.
If $\|\Sigma_\perp\|_{HS}>0$, define
$$
\beta_k = \frac{\|\Sigma_\perp V_\perp^T C^{1/2}\|_{HS}}{\sigma_k(V_k^TC^{1/2})\|\Sigma_\perp\|_{HS}}\text{.}
$$
    
    Then, with probability at least $1-\exp\big((\ell/6)(-u+1/2)\big)-\exp(-t^2/2)$,
\begin{equation}
    \|A - \hat A\|_{HS}^2 \le \|\Sigma_\perp\|_{HS}^2\left(1 + \beta_k^2~ \frac{\ell(1+K^2u)}{(\sqrt \ell - K \sqrt k - a - bt)^2}\right)\text{.}
\end{equation}
If $\|\Sigma_\perp\|_{HS}=0$, then $\hat A=A$ with at least the same probability, without defining $\beta_k$.

\begin{proof}
    When $\|\Sigma_\perp\|_{HS}=0$, Lemma~\ref{lemma:inf_dim_structural_bound} gives $\hat A=A$ on the full-row-rank event for $\Ess_1$ established below. Thus it suffices to consider $\|\Sigma_\perp\|_{HS}>0$ for the bound involving $\beta_k$.

    Again, consider the structural bound $\| \Sigma_\perp \Ess_2 \Ess_1^\dagger\|_{HS}^2 \le \|\Sigma_\perp \Ess_2\|_{HS}^2 \| \Ess_1^\dagger\|_2^2$.
    
    \underline{$\Sigma_\perp \Ess_2$:}
    We have that $\Sigma_\perp \Ess_2 = \Sigma_\perp V_\perp^T C^{1/2} \Omega$, where $\Omega$ has isotropic sub-Gaussian columns $\omega$ with sub-Gaussian column norm $K$. This has covariance operator $\Sigma_\perp V_\perp^T C V_\perp \Sigma_\perp$. 
    
    Let
    \begin{equation*}
        B=\Sigma_\perp V_\perp^T C^{1/2}
        =\sum_i \lambda_i^{1/2}f_i g_i^T
    \end{equation*}
    be its singular value decomposition, noting that $B$ is Hilbert--Schmidt because $\Sigma_\perp$ is. Representing the columns $\omega$ in the $g_i$ basis,
    \begin{align*}
        \omega(x)&=\sum_i \omega(g_i)\langle g_i,x\rangle\\
        &=\sum_i \ess_i\langle g_i,x\rangle,
    \end{align*}
    where $\ess_i=\omega(g_i)$ are uncorrelated with unit variance and $\|\ess_i\|_{\psi_2}\le K$. Thus,
    \begin{align*}
    B\omega(x)&=\omega(B^Tx)\\
    &=\sum \ess_i \langle g_i,B^Tx \rangle \\
    &= \sum \ess_i \langle Bg_i,x\rangle \\
    &= \sum \lambda_i^{1/2} \ess_i \langle f_i, x\rangle.
    \end{align*}
Applying Theorem~\ref{theorem:hs_bound} to $\Sigma_\perp \Ess_2$, we have that
\begin{equation*}
    \mathbb P\big(\| \Sigma_\perp \Ess_2\|_{HS}^2 > \ell ~\tr(\Sigma_\perp V_\perp^T C V_\perp \Sigma_\perp)(1+K^2u) \big) \le \exp\bigg (\frac{\ell}{6}\big(\frac12-u\big) \bigg) \text{.}
\end{equation*}

\underline{$\Ess_1^\dagger$:}
We have $\Ess_1=V_k^TC^{1/2}\Omega$ under the same assumptions as above on $\Omega$. Taking the columns of $W_k$ to be an orthonormal basis for the support of $V_k^TC^{1/2}$ gives
\begin{equation*}
    V_k^TC^{1/2}=V_k^TC^{1/2}W_kW_k^T.
\end{equation*}
Consequently,
\begin{align*}
    \|\Ess_1^\dagger\|_2
    &=\sigma_k(V_k^TC^{1/2}W_kW_k^T\Omega)^{-1}\\
    &\le\sigma_k(V_k^TC^{1/2}W_k)^{-1}\sigma_k(W_k^T\Omega)^{-1}.
\end{align*}

$\sigma_k(V_k^TC^{1/2}W_k)=\sigma_k(V_k^TC^{1/2})$ and $W_k^T\Omega$ is a $k\times \ell$ matrix with isotropic sub-Gaussian columns whose norm does not exceed $K$, so is subject to Theorem \ref{theorem:twosided}. Thus with probability at least $1-\exp(-t^2/2)$,

\begin{equation*}
    \| \Ess_1^\dagger \|_2^2 \le \sigma_k(V_k^TC^{1/2})^{-2}(\sqrt \ell - K \sqrt k - a - bt)^{-2}\text{.}
\end{equation*}

By a union bound, with probability at least $1-\exp((\ell/6)(-u+1/2))-\exp(-t^2/2)$,
\begin{align*}
    \big \| \Sigma_\perp \Ess_2 \Ess_1^\dagger \big \|^2_{HS} &\le \frac{\ell~\tr (\Sigma_\perp V_\perp^TCV_\perp\Sigma_\perp)(1+K^2u)}{ \sigma_k(V_k^TC^{1/2})^2(\sqrt \ell - K \sqrt k - a - bt)^2}\\
    &=  \frac{\tr(\Sigma_\perp V_\perp^T C V_\perp \Sigma_\perp)} {\lambda_k(V_k^TCV_k)} ~\frac{\ell(1+K^2u)}{(\sqrt \ell - K \sqrt k - a - bt)^2}.
\end{align*} 

Once again, finish by taking the structural bound $\|A - \hat A\|_{HS}^2 \le \| \Sigma_\perp \|^2+\|\Sigma_\perp \Ess_2 \Ess_1^\dagger\|^2$ and factorise.
\end{proof}
\end{theorem}

Similar to results in~\cite{boulle2021generalization}, the bounds rely on $\beta_k$ which measures how well the covariance operator aligns with the target operator, although our ``covariance quality measure'' is different to theirs.  We note that we haven't chosen to frame this result in terms of the sub-Gaussian norm of the sketch itself $\| \ess \|_{\psi_2}$, but the sub-Gaussian norms of the components $\ess_i$. The sub-Gaussian norm and second moment are intertwined but not equivalent, so we have found that encoding covariance information and sub-Gaussian norm information separately gives the cleanest result as this allows us to apply the reasoning of~\ref{theorem:gaussian_cov} more directly.

We also give results for the infinite-dimensional Nystr\"{o}m method, another randomised low-rank approximation algorithm for SPSD operators.

\begin{algorithm} 
\caption{Infinite-Dimensional Nystr\"om Algorithm (General Covariance)}
\label{alg:nystrom}
\textbf{Input:} covariance operator $C$, SPSD  trace-class target operator $A \in \mathrm{TC}(\mathcal H)$, sketch size $\ell$\\
\textbf{Output:} rank-$\ell$ approximation $\hat A$
\begin{algorithmic}[1]
\State Sample independent sketch functionals $\Ess = (\ess_1, \ess_2, \dots, \ess_\ell)$
\State $Y \gets A \Ess$ (according to Definition~\ref{definition:generalised_sketch_matrices})
\State $\hat A \gets Y ( \Ess^TY)^\dagger Y^T$
\end{algorithmic}
\end{algorithm}

\begin{corollary}\label{corollary:nystrom}
    Let $A:\mathcal H \to \mathcal H$ be a SPSD trace-class target operator with orthonormal eigendecomposition $V \Sigma V^T$. Sample $\ell$ independent isotropic sub-Gaussian sketch functionals with sub-Gaussian norm at most $K$ and let $\hat A$ be the output of Algorithm~\ref{alg:nystrom} with this sketch. 
    
    Write $\|A\|_* = \tr((A^TA)^{1/2})$ for the nuclear norm and let $u,t \ge 0$ be user-chosen parameters. Take $a,b$ from Remark~\ref{rem:tailbound_constants} and let $k < \ell$ be such that $D>0$, as in Theorem~\ref{theorem:subgaussian_maintheorem}. Adopt the notation of Lemma~\ref{lemma:inf_dim_structural_bound} for $\Sigma_\perp$, $V_k$, $V_\perp$.

    Then with probability at least $1-\exp\big((\ell/6)(-u+1/2)\big)-\exp(-t^2/2)$, 
     \begin{equation}
    \|A - \hat A\|_* \le \|\Sigma_\perp\|_*\left(1 + \frac{\ell(1+K^2u)}{(\sqrt \ell - K \sqrt k - a - bt)^2}\right).
    \end{equation}
    For independent sketch functionals $\ess$ of the form $C^{1/2}\omega$, where $C$ is a bounded SPSD operator on $\mathcal H$ and $\omega$ is isotropic with sub-Gaussian column norm $K$, assume $V_k^TC^{1/2}$ has rank $k$. If $\|\Sigma_\perp\|_*>0$, then with at least the same probability,

\begin{equation}
\|A - \hat A\|_{*} \le \|\Sigma_\perp\|_{*}\left(1 + \hat \beta_k \frac{\ell(1+K^2u)}{(\sqrt \ell - K \sqrt k - a - bt)^2} \right),
\end{equation}
where $\hat \beta_k = \beta_k(A^{1/2})^2$.
If $\|\Sigma_\perp\|_*=0$, then $\hat A=A$ with at least the same probability in either sketch setting, without defining $\hat\beta_k$.

\begin{proof}
    For any SPSD operator $A$ with sketch $\Ess$, denote $Z = A^{1/2} \Ess$. Then the Nystr\"om approximation for $A$ can be rewritten as
    \begin{equation*}
        \hat A = A \Ess (\Ess^T A \Ess)^{\dagger}(A\Ess)^T = A^{1/2} Z (Z^T Z)^{\dagger}Z^T A^{1/2} \\
    \end{equation*}
    and $P_Z=Z (Z^TZ)^{\dagger}Z^T$ is a symmetric projection onto the columns of $Z$. Now observe that $P_ZA^{1/2}$ is a randomised SVD for $A^{1/2}$ with  sketch $\Ess$, and since $A^{1/2}$ can also be taken to be SPSD,
    \begin{align*}
        \|A^{1/2}-P_ZA^{1/2}\|^2_{HS}&=\tr((A^{1/2}-P_ZA^{1/2})^T(A^{1/2}-P_ZA^{1/2})) \\
        &= \tr (A-A^{1/2}P_ZA^{1/2} -A^{1/2}P_ZA^{1/2}+A^{1/2}P_ZP_ZA^{1/2})\\
        &= \tr(A - A^{1/2}P_ZA^{1/2}) = \|A-\hat A\|_*,
    \end{align*}
    because $A- A^{1/2}P_ZA^{1/2}$ is positive semidefinite. The result follows by applying either Theorem~\ref{theorem:subgaussian_maintheorem} or Theorem~\ref{theorem:subgaussian_cov}, noting that $\|A^{1/2}\|_{HS}^2=\|A\|_*$. When $\|\Sigma_\perp\|_*=0$, the full-row-rank event for $V_k^T\Ess$ and Lemma~\ref{lemma:inf_dim_structural_bound} give $P_ZA^{1/2}=A^{1/2}$, so $\hat A=A$.
\end{proof}
\end{corollary}
 
\begin{remark}
    Algorithm~\ref{alg:nystrom} uses $\Ess^T A \Ess$ in its definition, which requires some care to define. Concretely, since $A^{1/2}$ is Hilbert-Schmidt, $Z=A^{1/2} \Ess$ has $\mathcal H$-valued columns almost surely, and we may define $\Ess^T A \Ess = Z^* Z$. In practice, we calculate $\Ess^T ( A \Ess)$, which does not require access to $A^{1/2}$ and coincides with that definition algebraically.
\end{remark}

\subsection{Sub-Gaussian Random Matrix Theory}

In view of Lemma~\ref{lemma:inf_dim_structural_bound}, we aim to bound $\|\Sigma_\perp \Ess_2 \Ess_1^\dagger\|$. In the Gaussian case, the usual approach is to condition on $\Ess_1$ and bound $\|\Sigma_\perp \Ess_2 B\|$ for deterministic $B$. This conditioning is convenient because uncorrelated Gaussian variables are independent. In the sub-Gaussian case, we do not have this luxury, and instead take $\|\Sigma_\perp \Ess_2 \Ess_1^\dagger\| \le \| \Sigma_\perp \Ess_2\| \|\Ess_1^\dagger\|$ and bound the quantities separately.\\
So, we will need a bound on the smallest singular value of sub-Gaussian random matrices for a bound on the pseudoinverse, and we will also need a bound for products with a sub-Gaussian random matrix. 

The following estimate on the mgf of $X^2-1$ for a sub-Gaussian random variable $X$ will be of use to us.
\begin{lemma}\label{lem:sub-exp bound}
Let $X$ be a centred, isotropic sub-Gaussian random variable with sub-Gaussian norm $K$. Then there exists a constant $C > 0$ such that
\[
\mathbb{E}\left[\exp\left(t\left(X^2 - 1\right)\right)\right]
\le
\exp\left(3t^2K^4\right) \quad \text{ if } \quad 0 \leq t \leq \frac{1}{6K^2}.
\]
\begin{proof}
  Let $G\sim N(0,1)$ be independent of $X$. Since $\mathbb{E}[e^{tG}] = e^{t^2 /2}$ and $\mathbb{E}[e^{tG^2}] = (1-2t)^{-1/2}$, the standard Gaussian averaging argument gives us
\begin{equation}\label{eq:gaussian-mgf}
 \mathbb{E} [e^{sX^2}]
 =\mathbb{E} [e^{\sqrt{2s}\,GX}]
 \leq\mathbb{E} [e^{K^2sG^2}]
 =(1-2K^2s)^{-1/2},
 \qquad 0\leq s<\frac1{2K^2}.
\end{equation}
Set $R(u)=e^u-1-u$. We first bound
$\mathbb{E} [R(X^2/(6K^2))]$ using the elementary inequality
\begin{equation}\label{eq:exponential-remainder}
 R(u)\leq\frac{17}{150}(e^{2u}-1),\qquad u\geq0,
\end{equation}
which is easily verified numerically. Combining~\eqref{eq:exponential-remainder} with~\eqref{eq:gaussian-mgf} gives
\[
 \mathbb{E} \left[R\!\left(\frac{X^2}{6K^2}\right)\right]
 \leq\frac{17}{150}(\sqrt3-1)\leq\frac1{12}.
\]
Since $\log x\leq x-1$ and $\mathbb{E} X^2=1$,
\begin{equation*}
    \log\mathbb{E}[e^{t(X^2-1)}] =\log\mathbb{E} [e^{tX^2}]-t \leq\mathbb{E}[e^{tX^2}]-1-t = \mathbb{E} [e^{tX^2}-1-tX^2] = \mathbb{E} [R(tX^2)].
\end{equation*}
The exponential series gives $R(\theta u)\leq\theta^2R(u)$
for $u\geq0$ and $0\leq\theta\leq1$. For $0\leq t\leq1/(6K^2)$,
we conclude
\[
 \log\mathbb{E} [e^{t(X^2-1)}]
 \leq\mathbb{E} [R(tX^2)] \leq(6K^2t)^2\mathbb{E} R\!\left(\frac{X^2}{6K^2}\right)
 \leq3K^4t^2.
\]
\end{proof}
\end{lemma}

  Recall that if $\Ess$ has covariance operator $C$, then $T\Ess$ has covariance operator $T C T^T$, and so the following yields an estimate for $\|T \Ess\|_{HS}$. 

\begin{theorem}\label{theorem:hs_bound}
    Suppose $\ess = \sum_i \lambda_i^{1/2} \ess_i p_i$ where $C= \sum_i\lambda_i p_i p_i^T$ is an orthonormal eigendecomposition of the trace-class covariance operator and $\ess_i$ are centred sub-Gaussian random variables with unit variance and sub-Gaussian norm $K$. If $\Ess$ is formed from $\ell$ iid copies of this sketch, then
    \begin{equation}
        \mathbb E\left [ \| \Ess\|_{HS}^2 \right]=\ell~ \tr(C)
    \end{equation}
    and
        \begin{equation}
        \mathbb P\bigg ( \| \Ess\|^2_{HS} > \mathbb E[\|\Ess\|_{HS}^2]\big (1+K^2u\big )\bigg ) \le \exp\left(\frac16\ell\left(\frac12-u\right)\right)\text{.}
    \end{equation}
\end{theorem}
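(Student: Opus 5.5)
The plan is to write the Hilbert--Schmidt norm as a double series and apply a Chernoff argument, using Lemma~\ref{lem:sub-exp bound} for the moment generating function of $\ess_i^2-1$. Denote the $\ell$ iid columns by $\ess^{(j)}=\sum_i\lambda_i^{1/2}\ess_i^{(j)}p_i$ and write $T=\tr(C)$. Since $\{p_i\}$ is orthonormal,
$$\|\Ess\|_{HS}^2=\sum_{j=1}^\ell X_j,\qquad X_j=\sum_i\lambda_i\big(\ess_i^{(j)}\big)^2 .$$
For the expectation, every term is nonnegative, so monotone convergence together with $\mathbb E[(\ess_i^{(j)})^2]=1$ gives $\mathbb E X_j=\sum_i\lambda_i=T$. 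Summing over $j$ gives $\ell\,\tr(C)$.

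For the tail bound I will not assume independence of the $\ess_i^{(j)}$ across $i$ within one column. This matters because Theorem~\ref{theorem:subgaussian_cov} applies the result to merely uncorrelated components. Independence is used only across columns. The key step is a convexity (Jensen) bound. If $T=0$ there is nothing to prove, so assume $T>0$. The weights $\lambda_i/T$ form a probability vector and $\exp$ is convex, so for $t\ge0$
$$\exp\big(t(X_j-T)\big)=\exp\Big(\sum_i\tfrac{\lambda_i}{T}\,tT\big((\ess_i^{(j)})^2-1\big)\Big)\le\sum_i\tfrac{\lambda_i}{T}\exp\Big(tT\big((\ess_i^{(j)})^2-1\big)\Big).$$
This holds for finite truncations, and passes to the infinite sum by Fatou's lemma or lower semicontinuity of $\exp$. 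Taking expectations and applying Lemma~\ref{lem:sub-exp bound} with parameter $tT$ gives, whenever $0\le tT\le 1/(6K^2)$,
$$\mathbb E\, e^{t(X_j-T)}\le e^{3t^2T^2K^4}.$$

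By independence of the columns, $\mathbb E\exp\big(t(\|\Ess\|_{HS}^2-\ell T)\big)\le e^{3\ell t^2T^2K^4}$. Markov's inequality then yields
$$\mathbb P\big(\|\Ess\|_{HS}^2>\ell T(1+K^2u)\big)\le\exp\big(3\ell t^2T^2K^4-t\ell TK^2u\big).$$
I take the endpoint $t=1/(6K^2T)$. This is admissible, and it turns the exponent into $\ell/12-\ell u/6=\tfrac{\ell}{6}(\tfrac12-u)$, which is the claimed bound.

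The only delicate point is the one I expect to be the main obstacle: avoiding any independence assumption within a column, and justifying the passage to infinite series. The Jensen step with weights $\lambda_i/T$ handles the former. Fatou's lemma and monotone convergence, applicable because everything is nonnegative and $T<\infty$, handle the latter.
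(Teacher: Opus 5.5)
Your proof is correct and takes essentially the same route as the paper's. Both arguments use a Chernoff bound with weights $\lambda_i/\tr(C)$ to decouple the possibly dependent coordinates within one column, apply Lemma~\ref{lem:sub-exp bound} at parameter $\mu\,\tr(C)\le 1/(6K^2)$, use independence only across the $\ell$ columns, and take the endpoint $\mu=1/(6K^2\tr(C))$.

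The only difference is the decoupling inequality. You use Jensen's inequality for the convex combination, which gives an arithmetic mean of the coordinate mgfs. The paper uses the generalised H\"older inequality with exponents $p_i=\tr(C)/\lambda_i$, which gives a geometric mean. Because the lemma's bound is uniform in $i$, both yield the identical estimate. Jensen also applies directly to the countable combination, since $\sum_i\lambda_i|\ess_i^2-1|<\infty$ almost surely, so your finite-truncation remark is not needed.
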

\begin{proof}
    If $\tr(C)=0$, Proposition~\ref{prop:traceclass} gives $\mathbb E[\|\ess\|^2]=0$, so $\Ess=0$ almost surely. Both strict upper-tail probabilities are therefore zero and the claims hold immediately. Henceforth assume $\tr(C)>0$ and omit indices with $\lambda_i=0$ from the following sums and products, since they contribute nothing to $\|\ess\|^2$.

    First, we consider one column $\ess$. We have that $\mathbb E[\|\ess\|^2]=\tr(C)$ by the proof of Proposition~\ref{prop:whitenoiseproblem} and for any $u\ge0$ and $\mu>0$,

    \begin{align*}
        \mathbb P \big ( \| \ess\|^2 > \mathrm{tr}(C)+K^2 \tr(C) u\big )&= \mathbb P\bigg ( \mu \sum_i \lambda_i \ess_i^2 > \mu\left (\sum_i \lambda_i + K^2 \tr(C) u  \right)\bigg )\\
        &= \mathbb P \bigg ( \exp\bigg( \mu \sum_i \lambda_i(\ess_i^2-1) \bigg)> \exp \big( \mu K^2 \tr(C) u\big) \bigg) \\
        &\le \exp(-\mu K^2 \tr(C) u) \mathbb E\left [ \exp \left( \mu \sum_i \lambda_i(\ess_i^2-1)\right) \right ] \\
        &\le \exp(- \mu K^2 \tr(C) u )\prod_i \mathbb E \left[\exp\left(\mu p_i\lambda_i(\ess_i^2-1)\right)\right]^{1/p_i}
    \end{align*}
    where the first inequality is Markov's inequality and the second is the generalised H\"older inequality with constants $p_i>0$ satisfying $\sum\frac1{p_i}=1$ to be chosen. Indeed, let $p_i = \frac{\tr(C)}{\lambda_i}$. Then,
    \begin{align*}
        \mathbb E\big[ \exp\big(\mu p_i \lambda_i (\ess_i^2-1)\big)\big]^{1/p_i}&= \mathbb E \big [ \exp\big(\mu \tr(C) (\ess_i^2-1)\big )\big ]^{\lambda_i/\tr(C)}\\
        &\le \exp( 3\mu^2\tr(C)^2 K^4)^{\lambda_i/\tr(C)}\\
        &= \exp(3 \mu^2 \tr(C) \lambda_i K^4)
    \end{align*}
by Lemma~\ref{lem:sub-exp bound} whenever $\mu \tr(C) \le \frac{1}{6K^2}$. So,
\begin{align*}
    \mathbb P \big ( \| \ess\|^2 > \mathrm{tr}(C)+K^2 \tr(C) u\big ) &\le \exp(- \mu K^2 \tr(C) u ) \prod_i \exp(3 \mu^2 \tr(C) \lambda_iK^4)\\
    &=  \exp\big(3\mu^2 \tr(C)^2K^4-\mu K^2 \tr(C)u\big)
\end{align*}
For moderate and large $u$, choosing $\mu = \frac1{6K^2 \tr(C)}$ minimises this expression within the permitted interval, which yields $\exp\big(\frac1{12}-\frac{u}{6}\big)$.

To extend to multiple independent columns $\Ess=(\ess^{(1)},\dots,\ess^{(\ell)})$, note that
\begin{align*}
    \mathbb E[\|\Ess\|_{HS}^2]
    =\sum_{j=1}^\ell\mathbb E[\|\ess^{(j)}\|^2] =\ell\,\tr(C).
\end{align*}
Applying Markov's inequality to the recentred sum $\sum_{j=1}^\ell(\|\ess^{(j)}\|^2-\tr(C))$ exactly as above gives
\begin{align*}
\mathbb P\big(\| \Ess\|_{HS}^2 > \mathbb E[\| \Ess\|_{HS}^2](1+K^2u) \big) &= \mathbb P\left ( \sum_{j=1}^{\ell} \big(\|\ess^{(j)}\|^2 - \tr(C) \big) > \ell \tr(C)K^2u\right )\\
&\le \exp(-\ell \mu K^2 \tr(C) u) \, \mathbb E\Big[ \exp\Big(\mu \sum_{j=1}^\ell (\| \ess^{(j)}\|^2-\tr(C))\Big)\Big] \\
&= \prod_{j=1}^\ell \exp(- \mu K^2 \tr(C)u) \, \mathbb E\big[\exp(\mu (\|\ess^{(j)
}\|^2 -\tr(C)))\big],
\end{align*}
so we bound each of these for $j=1,\ldots,\ell$ as above.
\end{proof}


As mentioned above, we will also need the following two-sided bound for singular values.

\begin{theorem}[\cite{webb2026sharp}]
\label{theorem:twosided}
Let $\Ess\in\mathbb R^{m\times n}$, $m\geq n\geq1$, have independent,
centred, isotropic rows with sub-Gaussian norm at most $K$ for some $K\geq1$ and put $L = \log ( e K)$. Then there exist constants $a$, $b$, $c$, $d$ depending on $K$ such that for all $t\ge 0$,
     \begin{align*}
         \mathbb P \left [ \sigma_{\min}(\Ess) \le \sqrt m - K \sqrt n  - a - b t\right] &\le e^{-t^2/2}\\
         \mathbb P \left [ \sigma_{\max}(\Ess) \ge \sqrt m+ K \sqrt n  + c + d  t\right] &\le e^{-t^2/2}.
     \end{align*}
\end{theorem}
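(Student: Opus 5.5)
The plan is to prove both inequalities with a \emph{variational (PAC-Bayesian) smoothing argument}. Such an argument uses only moment generating function bounds, so it can be run directly with the variance-proxy norm of Definition~\ref{def:sub-Gaussian}, and it is the natural way to get the leading constant $K$ on $\sqrt n$ exactly. A net argument would lose an absolute constant in front of $\sqrt n$, and generic chaining would lose one as well. Gordon's min-max comparison would give the sharp constant but needs genuinely Gaussian entries. Throughout, write $r_1,\dots,r_m\in\mathbb R^n$ for the rows of $\Ess$, so that $\|\Ess x\|^2=\sum_{i}\langle r_i,x\rangle^2$ and $\langle y,\Ess x\rangle=\sum_i y_i\langle r_i,x\rangle$. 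Each $\langle r_i,x\rangle$ is centred with unit variance and $\psi_2$-norm at most $K\|x\|$.

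\textbf{Upper tail.} Write $\sigma_{\max}(\Ess)=\sup_{x\in S^{n-1},\,y\in S^{m-1}}\langle y,\Ess x\rangle$. Fix unit vectors $x,y$ and consider the Gaussian ``posterior'' $\rho_{x,y}$ of the pair $(x+\gamma g,\ y+\gamma h)$, where $g\sim\mathcal N(0,I_n/n)$ and $h\sim\mathcal N(0,I_m/m)$. The ``prior'' $\pi$ is the same law centred at $0$. By bilinearity, $\mathbb E_{\rho_{x,y}}\langle y',\Ess x'\rangle=\langle y,\Ess x\rangle$. The Donsker--Varadhan inequality then gives, simultaneously for all $(x,y)$ and all $\lambda>0$,
\[
\lambda\langle y,\Ess x\rangle\le \mathrm{KL}(\rho_{x,y}\|\pi)+\log\mathbb E_{\pi}\exp(\lambda\langle y',\Ess x'\rangle).
\]
Here $\mathrm{KL}(\rho_{x,y}\|\pi)=(n+m)/(2\gamma^2)$. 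Markov's inequality together with Fubini reduces the problem to bounding $\mathbb E_\Ess\mathbb E_\pi e^{\lambda\langle y',\Ess x'\rangle}$. Conditionally on $(x',y')$, this expectation factorises over the rows as $\prod_i\mathbb E\, e^{\lambda y_i'\langle r_i,x'\rangle}\le \exp(\lambda^2K^2\|x'\|^2\|y'\|^2/2)$, which is where the variance-proxy constant enters with no loss. The remaining Gaussian integral over $(x',y')$ is explicit. Optimising over $\lambda$ and $\gamma$ should produce a bound of the form $\sqrt m+K\sqrt n+c+d t$ with $c,d$ depending on $K$.

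\textbf{Lower tail.} Here the objects are nonnegative quadratic forms rather than a bilinear form. Write $\sigma_{\min}(\Ess)^2=\inf_x\|\Ess x\|^2$, and smooth only over $x$, using $\rho_x=\mathcal N(x,\gamma^2 I_n/n)$. Since $x'\mapsto\|\Ess x'\|^2$ is convex,
\[
\mathbb E_{\rho_x}\|\Ess x'\|^2=\|\Ess x\|^2+\frac{\gamma^2}{n}\|\Ess\|_{F}^2 .
\]
A lower bound on $\|\Ess x\|^2$ that holds uniformly in $x$ therefore follows from a uniform lower bound on the posterior average minus $\frac{\gamma^2}{n}\|\Ess\|_F^2$. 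The term $\|\Ess\|_F^2$ concentrates around $mn$, which can be shown by the same Chernoff computation as in Theorem~\ref{theorem:hs_bound}, using Lemma~\ref{lem:sub-exp bound}. For the lower tail of the posterior average, apply Donsker--Varadhan to $-\lambda\|\Ess x'\|^2$. The elementary bound $\mathbb E e^{-sZ}\le 1-s\,\mathbb E Z+\tfrac{s^2}{2}\mathbb E Z^2$, valid for $Z\ge0$, applied row by row, combined with the fourth-moment bound $\mathbb E\langle r_i,x'\rangle^4\lesssim K^4\|x'\|^4$, controls $\mathbb E\, e^{-\lambda\|\Ess x'\|^2}$. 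This gives, with high probability, $\|\Ess x\|^2\ge m-2K\sqrt{mn}-O(\sqrt m\,(1+t))$ uniformly in $x$. Taking square roots then yields $\sqrt m-K\sqrt n-a-bt$.

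\textbf{Main obstacle.} I expect the hardest step to be this lower-tail calculation. The prior must be tuned ($\gamma$ of order $1$, $\lambda$ of order $1/\sqrt{m}$) so that the entropy term $n/(2\gamma^2)$ and the curvature correction $\gamma^2\|\Ess\|_F^2/n$ balance to give exactly $2K\sqrt{mn}$ rather than a larger multiple. Only the \emph{one-sided} sub-Gaussian behaviour of $\langle r_i,x'\rangle$ at the level of $\psi_2$ should enter the leading term; the fourth moments must be pushed into the constants $a,b$. If the quadratic version does not give the constant $K$ directly, the fallback is to linearise via $\|\Ess x\|=\sup_y\langle y,\Ess x\rangle\ge\langle y_x,\Ess x\rangle$, with $y_x$ a smoothed choice of $\Ess x/\|\Ess x\|$. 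This converts the problem back into the bilinear, sharp-constant setting of the upper tail, at the cost of a more delicate choice of posterior.
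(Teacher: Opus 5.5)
The paper does not prove this theorem. It quotes it from \cite{webb2026sharp}, so there is no in-paper argument to compare with. Judged on its own, your plan has genuine gaps in both tails.

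\textbf{Upper tail.} Your chain uses only two distributional facts: row independence and $\mathbb E\, e^{\lambda\langle y',\Ess x'\rangle}\le e^{\lambda^2K^2\|x'\|^2\|y'\|^2/2}$. The unit variance of $\langle r_i,x\rangle$ never enters. Take $KG$ with $G$ an $m\times n$ standard Gaussian matrix. It satisfies exactly these hypotheses, with equality in the MGF bound. So any bound $B$ your computation outputs also gives $\mathbb P(\sigma_{\max}(KG)>B)\le e^{-s}$. Since $\|KGe_1\|\approx K\sqrt m$ already, this forces $B\gtrsim K\sqrt m$. For $K>1$ that exceeds $\sqrt m+K\sqrt n+c+dt$ by roughly $(K-1)\sqrt m$, which is unbounded in $m$. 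The coefficient $1$ on $\sqrt m$ can only come from isotropy: you must centre $\|\Ess x\|^2$ at $m\|x\|^2$ and control $\sup_x(\|\Ess x\|^2-m\|x\|^2)$. Separately, the ``explicit Gaussian integral'' diverges. For independent Gaussian $x',y'$, conditioning on $y'$ shows $\mathbb E_\pi\exp(c\|x'\|^2\|y'\|^2)=\infty$ for every $c>0$. The priors therefore have to be truncated before Markov's inequality says anything.

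\textbf{Lower tail.} As set up, the step is vacuous. You use the prior $N(0,\gamma^2 I_n/n)$ and apply Markov to $\mathbb E_\pi e^{-\lambda\|\Ess x'\|^2}$. By Jensen twice, $\mathbb E_\pi\mathbb E_\Ess e^{-\lambda\|\Ess x'\|^2}\ge \mathbb E_\pi e^{-\lambda m\|x'\|^2}\ge e^{-\lambda m\gamma^2}$. So the best conclusion available is $\mathbb E_{\rho_x}\|\Ess x'\|^2\ge m\gamma^2-\frac{n}{2\lambda\gamma^2}-\frac{s}{\lambda}$. Subtracting $\frac{\gamma^2}{n}\|\Ess\|_F^2\approx\gamma^2 m$ then leaves a negative bound. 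The reason is that the prior lives at $\|x'\|^2\approx\gamma^2$, while the posterior lives at $1+\gamma^2$.

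The standard repair is to centre pointwise. Apply Donsker--Varadhan to $-\lambda\|\Ess x'\|^2-m\log\mathbb E\, e^{-\lambda\langle r,x'\rangle^2}$, whose exponential has $\Ess$-mean exactly $1$. After that the curvature term cancels to leading order. What balances the entropy $n/(2\gamma^2)$ is then the second-order term $\frac{\lambda m}{2}M_4(1+\gamma^2)^2$, where $M_4=\sup_{\|x\|=1}\mathbb E\langle r,x\rangle^4$. Optimising at $\gamma=1$ gives $\sigma_{\min}\ge\sqrt m-\sqrt{M_4}\,\sqrt n-\dots$.

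So the leading coefficient is governed by the fourth moment, not by $K$. It is $\sqrt3$ for standard Gaussian rows, where the truth is $1$. With your bound $M_4\lesssim K^4$ it is of order $K^2$. Even the true $M_4/K^2$ is unbounded under the $\psi_2$ hypothesis: i.i.d.\ sparse Rademacher entries of density $q$ have $M_4=1/q\approx 2K^2\log(2/q)$. The shortfall therefore grows like $\sqrt n$ and cannot be absorbed into $a$. Balancing ``entropy against curvature'' cannot produce $2K\sqrt{mn}$, because neither term contains $K$.

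The linearisation fallback does not help either. Donsker--Varadhan certifies bounds uniformly over all centres $(x,y)$, and a uniform lower bound on $\langle y,\Ess x\rangle$ over unit vectors is necessarily $\le-\sigma_{\max}(\Ess)$. Meanwhile $\sigma_{\min}=\min_x\max_y\langle y,\Ess x\rangle$ is a min--max, which a one-sided smoothing bound on a single process cannot capture.

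As it stands, the proposal establishes neither inequality with the claimed leading constants $1\cdot\sqrt m$ and $K\sqrt n$.
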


\begin{remark}\label{rem:tailbound_constants}
The following choices of constants are valid.
\begin{center}
    \begin{tabular}{c|c|c|c|c}
         & $a$ & $b$ & $c$ & $d$  \\
         \hline
         $K=1$& $1.32$ & $1.12$ & $2.32$ & $\sqrt 2$ \\
         $K>1$ & $3.84 KL$ & $1.58K\sqrt{L}$ & $2.35KL$ & $1.5K\sqrt{L}$ \\
         
    \end{tabular}
\end{center}
\end{remark}

\section{Numerical Experiments}\label{section:numerical_exp}
In this section, we apply our theory to the randomised SVD of the continuous and discrete Schr\"odinger operators. The Schr\"odinger differential operator $E$ is defined for us as
\begin{equation}
    E\psi = - \epsilon\Delta \psi  + V \psi
\end{equation}
where $\psi$ belongs to the operator domain $D(E)$, a dense subspace of $L^2(I)$ (or $\ell^2(\mathbb Z)$ in Section~\ref{subsection:discrete_schrod}), with the specific domains given below. Here $I \subseteq \mathbb R^n$, $\Delta$ is the Laplacian, $V \ge0$ is a potential function and $\epsilon$ is some small positive constant.
Subject to certain boundary conditions, it is a symmetric positive semi-definite operator which is unbounded over $L^2$ with rich spectral theory, discussed in detail for example in Agmon~\cite{agmon1975spectral}. 
Its eigenvalues and eigenfunctions have significant interpretations in quantum physics. The situation is complicated because the definition involves the Laplacian $\Delta$, an unbounded operator defined only on a dense subset of $L^2$. We will instead work with the inverse of $E$, denoted $G$, so that $Gf=\psi$ if $E\psi = f$. 

We will also consider the discrete $\ell^2$ case by replacing $\Delta$ with the discrete Laplacian. The result is another SPSD operator where the spectral theory is different; boundedness now depends on the potential $V$. With analysis of the eigenvectors of the discrete Laplacian, we also exhibit an interesting example of an isotropic sketch entirely determined by a $1$-dimensional random variable.

\paragraph{Experimental setup.}
Experiments were conducted in MATLAB R2025b with Chebfun~\cite{driscoll2014chebfun}. In Section~\ref{subsection:cont_exp}, we represent $G$ using its first 500 eigenpairs (which omits an error of less than $10^{-3}$ in the Hilbert--Schmidt norm), isotropic sketches were represented in the normalised $L^2$ Legendre basis again to 500 components, and decaying covariance sketches were represented in the basis $\{ 1/\sqrt{2} \} \cup \{\sin(n \pi x), \cos(n \pi x)\}_{n \ge 1}$ up to $n=500$. We compute randomised SVDs from rank $\ell_{\min} = 50$ up to rank $\ell_{\max}=200$. In Section~\ref{subsection:discrete_schrod}, we represent $G$ with its first $1500$ eigenpairs (which also omits an error of less than $10^{-3}$ in the Hilbert--Schmidt norm) and all sketches are represented in the canonical $\ell^2(\mathbb Z)$ basis, with indices from -2000 to +2000. We compute randomised SVDs from $\ell_{\min}=50$ up to rank $\ell_{\max}=1000$. Throughout this section, error plots are over 15 trials sampling $\ell_{\max}$ sketch functionals, plotting medians and min-max error bands over the first $\ell$ samples, unless stated otherwise. Theorems~\ref{theorem:KPU_maintheorem},~\ref{theorem:bt_maintheorem} and \ref{theorem:gaussian_cov} were applied with $u=t=3$ and $p=5$ throughout. Our sub-Gaussian bounds were applied with $u=1,t=2$, and $k$ chosen to be maximal under the constraint that $t=3$ would be admissible. At rank $\ell_{\min}=50$, this target rank is $4$ and the choice of parameters guarantees a failure probability less than $0.14$.

\subsection{The Continuous Schr\"odinger Operator}\label{subsection:cont_exp}

In this subsection, we set $\epsilon = 0.1$ and find low-rank approximations to the Green's function solving
\begin{equation}
    E\psi =- \frac{1}{10} \Delta \psi + x^2 \psi = f, \label{eqn:example_cts_schro}
\end{equation}
with $f\in L^2([-1,1])$ and $\psi\in D(E)$, where $D(E)=H^2(-1,1)\cap H_0^1(-1,1)$. This domain imposes the homogeneous Dirichlet boundary conditions $\psi(-1)=\psi(1)=0$. The eigenvalues of $E$ are positive and, by the classical Sturm--Liouville eigenvalue asymptotics~\cite[Theorem~5.25]{teschl2012ordinary}, satisfy $\lambda_n(E)\sim(\pi^2/40)n^2$ as $n\to\infty$. Thus its inverse $G$ is trace-class and positive definite.

\begin{figure}[htbp]
    \centering
    \includegraphics[width=.45 \linewidth]{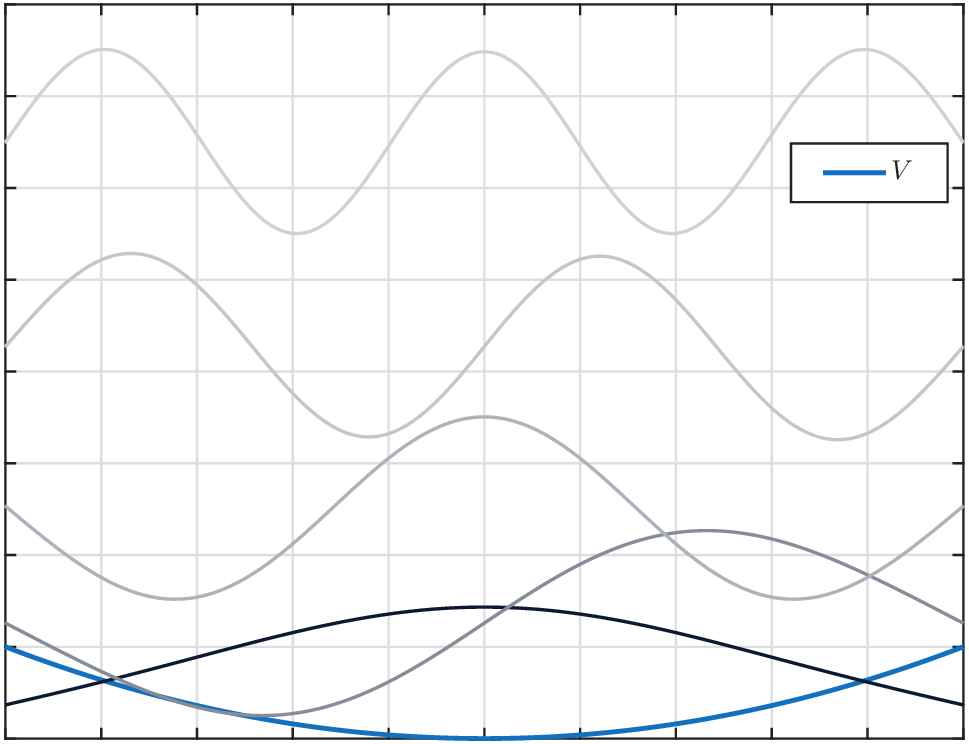}
    \caption{$\psi_n + \lambda_n$ for the first 5 eigenfunctions $\psi_n$ of continuous Schr\"odinger operator $E$ defined in~\eqref{eqn:example_cts_schro} together with the potential $V(x)$. Larger eigenvalues of $E$ (smaller eigenvalues of $G$) are given a lighter colour. Higher frequencies are associated with higher eigenvalues. }
    \label{fig:cts_schro_eigenfunctions}
\end{figure}

In Figure~\ref{fig:cts_schro_errors}, we apply the randomised SVD to $G$ with isotropic Rademacher sketch, and compare the resulting error to Theorem~\ref{theorem:subgaussian_maintheorem}. We compare this with an isotropic Gaussian sketch and Theorem~\ref{theorem:KPU_maintheorem}, using parameters given in the experimental setup paragraph. Note that in practice the distributions perform almost identically, although the sub-Gaussian results are much worse than their Gaussian counterparts, requiring much higher oversampling. 
 
\begin{figure}[htbp]
    \centering
    \includegraphics[width=0.4\linewidth]{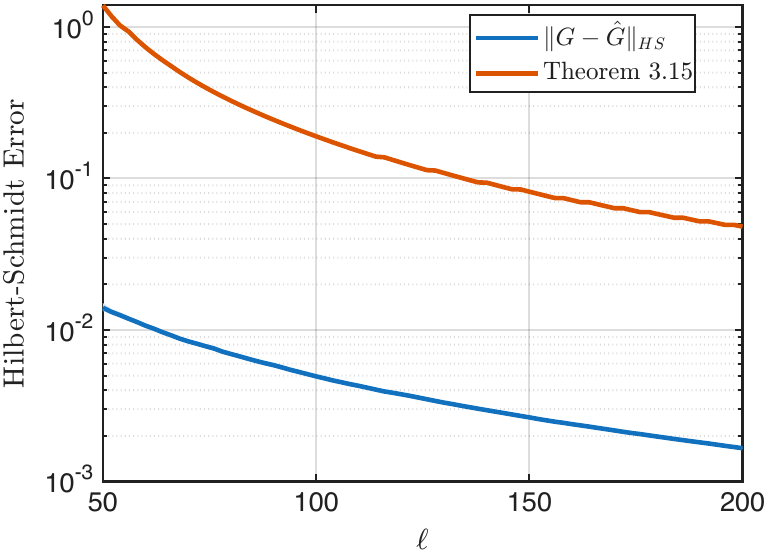}
    \includegraphics[width=0.4\linewidth]{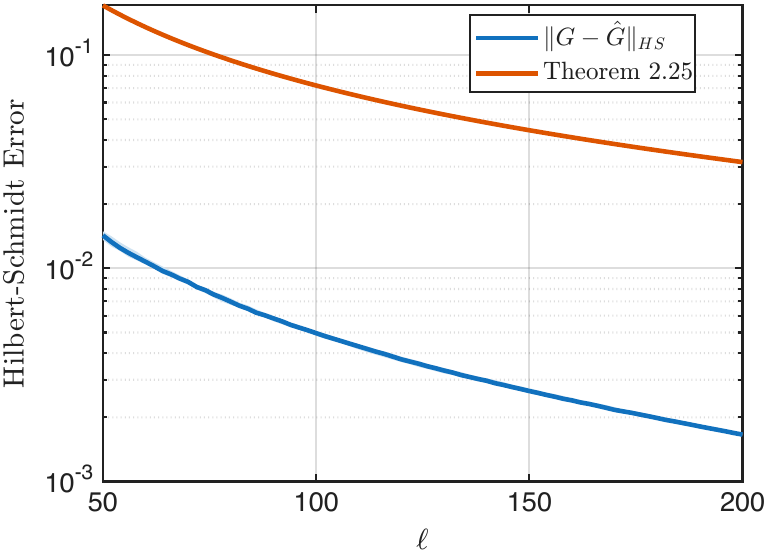}

    \caption{Practice vs.~theory errors when applying the randomised SVD to $G$ with isotropic Rademacher (left) and Gaussian (right) sketch functionals.}
    \label{fig:cts_schro_errors}
\end{figure}

In finite dimensions, most isotropic distributions perform very similarly, and this is true in the infinite-dimensional case too as shown in Figure~\ref{fig:distributions}. This contrasts with the huge influence of the sub-Gaussian norm $K$ suggested in Theorem~\ref{theorem:subgaussian_maintheorem} as discussed in Remark~\ref{remark:horrific_k_dependence}. That being said, it's clear that lowering the parameter $q$ in sparse Rademacher (increasing the sub-Gaussian norm) does have quite a large effect for low $\ell$.
\begin{figure}[htbp]
    \centering
    \includegraphics[width=0.5\linewidth]{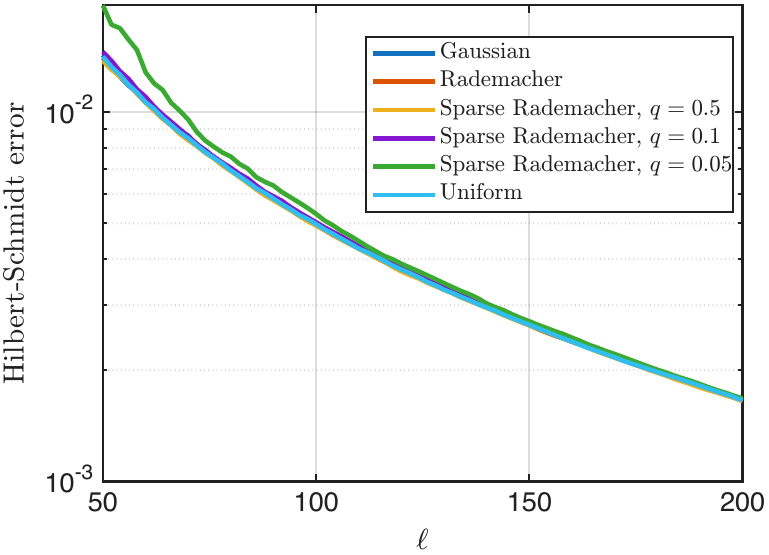}
    \caption{Hilbert--Schmidt errors when the randomised SVD is applied with a variety of isotropic distributions. Only the median over 15 trials is plotted here.}
    \label{fig:distributions}
\end{figure}

As noted in Figure~\ref{fig:cts_schro_eigenfunctions}, higher frequencies are associated with higher eigenvalues of $E$ and thus low eigenvalues of $G$. Perhaps it would be a good idea to encode this knowledge into our sketch. Let $C$ be an operator with eigenvectors $\sin(n\pi x)$ and $\cos(n\pi x)$ for $n\ge1$ each having eigenvalues $1/(n+1)^2$, and an eigenvalue of $1$ on constant functions. We apply the randomised SVD to $G$ using both a Rademacher and Gaussian sketch with this covariance operator, comparing to the theoretical bounds in Figure~\ref{fig:covariance_experiment}. The performance of the algorithm is noticeably better than an isotropic sketch, but no available bounds capture this improvement. We again plot Theorem~\ref{theorem:gaussian_cov} and Theorem~\ref{theorem:bt_maintheorem} with $p=5, u=t=3$. 

In Figure~\ref{fig:gaussian_cov_comparison}, we extend this idea and compare a variety of Gaussian sketches: isotropic, $\lambda_n = (n+1)^{-2}$ as before, and $\lambda_n = (n+1)^{-1}$ resulting in a compact but not trace-class covariance operator. It's clear that changing this decay can yield marginal gains in performance, although theoretically there is a tradeoff. If we've chosen the exact right-singular vectors $V$ and the covariance operator is $V \Lambda V^T$, then it is true that $\beta_k= \frac{\| \Sigma_\perp \Lambda^{1/2}_\perp\|_{HS}}{\lambda_k^{1/2}\| \Sigma_\perp\|_{HS}} \le \frac{ \lambda_{k+1}^{1/2}}{\lambda_k^{1/2}}$ which can decay arbitrarily quickly. Of course, it is a very lucky situation where we know the singular vectors in advance, and in this case there would be no need for a randomised SVD. Contrastingly, if the approximate singular basis is inexact and the decay is too strong, $\beta_k$ can quickly blow up. With Figure~\ref{fig:covariance_experiment} we are in such a case, which is why the sub-Gaussian Theorem~\ref{theorem:subgaussian_cov} lies below its Gaussian counterparts; the higher oversampling means a much smaller $\beta_k$. Optimising the Gaussian results over the oversampling parameter $p$ would reverse this picture.

\begin{figure}
    \centering
    \includegraphics[width=0.5\linewidth]{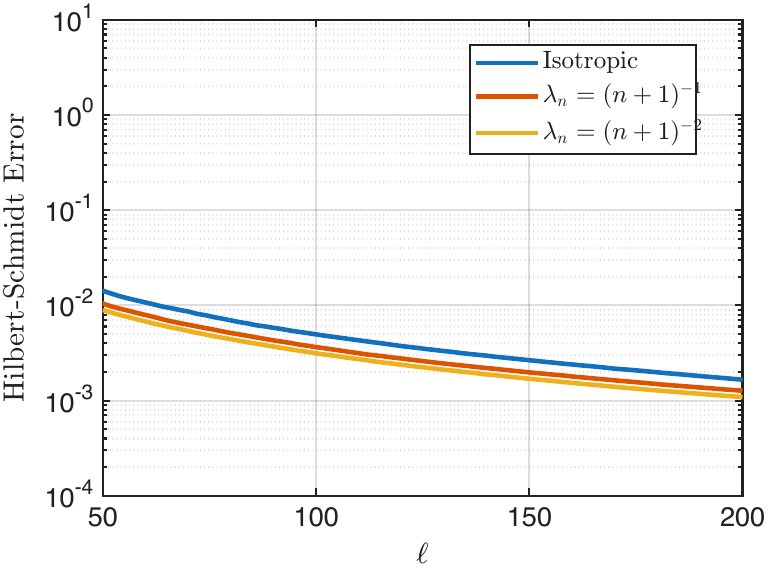}
    \caption{Performance of the randomised SVD when a Gaussian sketch with a decaying trigonometric covariance operator is used compared to isotropic. The case $\lambda_n = (n+1)^{-1}$ results in a compact but not trace-class covariance operator.}\label{fig:gaussian_cov_comparison}
\end{figure}

\begin{figure}[htbp]
    \centering
    \includegraphics[width=0.4\linewidth]{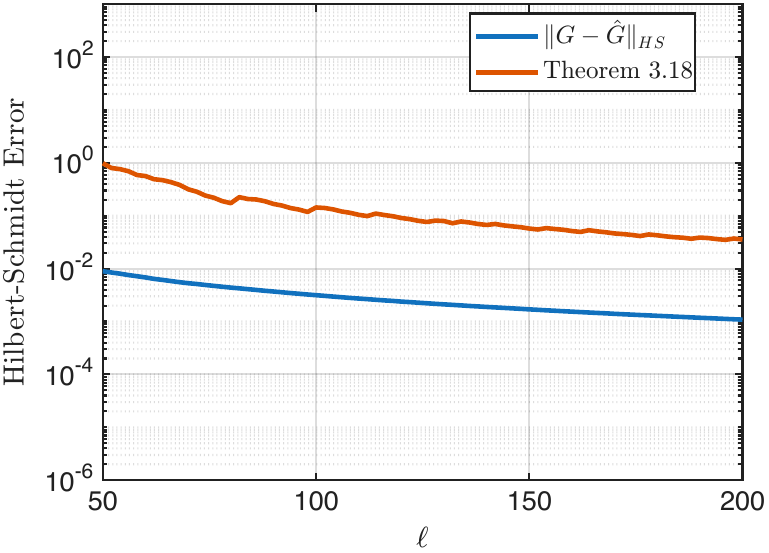}
    \includegraphics[width=0.4\linewidth]{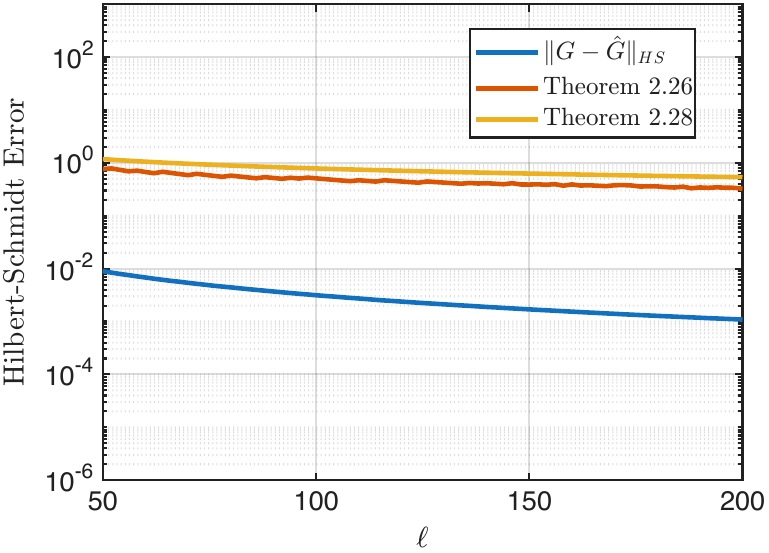}
    \caption{Incorporating prior knowledge of $G$ into the covariance operator can improve accuracy. Decaying trigonometric sketch with Rademacher (left) and Gaussian (right) coefficients and relevant theoretical bounds.}
    \label{fig:covariance_experiment}
\end{figure}
\subsection{The Discrete Schr\"odinger Operator}\label{subsection:discrete_schrod}

We can also consider $E$ on $\ell^2(\mathbb Z)$ if we replace $\Delta$ by the discrete Laplacian. In one dimension, this is given by $\Delta u_i = (u_{i+1}-u_i)-(u_i-u_{i-1})=u_{i+1}+u_{i-1}-2u_i$, the one-dimensional ``second derivative'' of a sequence. In higher dimensions, it is the sum of that quantity over all dimensions, like in the continuous case.

In this subsection, we consider the operator $E:D(E)\subseteq\ell^2(\mathbb Z)\to\ell^2(\mathbb Z)$ given by $Eu = -\Delta u + Vu$, with $V(n)=n^2$ and domain
\[
    D(E)=\big\{u\in\ell^2(\mathbb Z):(n^2u_n)_{n\in\mathbb Z}\in\ell^2(\mathbb Z)\big\}.
\]
The discrete Laplacian is bounded on $\ell^2(\mathbb Z)$, so the domain of $E$ is determined by the multiplication operator $V$. This operator has an infinite banded matrix representation.
\begin{equation}
    E = \begin{psmallmatrix} 
    \ddots & ~ & ~ & ~ & ~ & ~ & ~\\
    ~ &2+V(-2) & -1 & ~ & ~ & ~ & ~\\
    ~ & -1 & 2+V(-1) & -1 & ~ & ~ & ~\\
    \cdots & ~ & -1 & 2+V(0) & -1 & ~ & \cdots\\
    ~ & ~ & ~ & -1 & 2+V(1) & -1 & ~\\
    ~ & ~ & ~ & ~ & -1 & 2+V(2) & ~ \\
    ~ & ~ & ~ & ~& ~ & ~ & \ddots
    \end{psmallmatrix}
\end{equation}

In the previous subsection, we improved the randomised SVD by sampling from a distribution that decayed in the Fourier basis. The choice of basis was due to the observation that higher energy eigenfunctions corresponded to higher frequencies, so large components later in that basis. This is essentially due to $\sin(n\pi (x+1)/2)$ being an eigenvector of $-\Delta$ with eigenvalues scaling like $n^2$, subject to our boundary conditions.

Things are different in the case of the discrete Laplacian. Recall the matrix representation for $-\Delta$. It is a doubly infinite Toeplitz matrix (also called a Laurent operator), which has well-established spectral theory~\cite{bottcher2000toeplitz}.
\begin{equation}
    -\Delta = \begin{psmallmatrix} 
    \ddots & ~ & ~ & ~ & ~ & ~ & ~\\
    ~ &2 & -1 & ~ & ~ & ~ & ~\\
    ~ & -1 & 2& -1 & ~ & ~ & ~\\
    \cdots & ~ & -1 & 2 & -1 & ~ & \cdots\\
    ~ & ~ & ~ & -1 & 2 & -1 & ~\\
    ~ & ~ & ~ & ~ & -1 & 2 & ~ \\
    ~ & ~ & ~ & ~& ~ & ~ & \ddots
    \end{psmallmatrix}
\end{equation}
For the following formal calculation, we apply the discrete Laplacian coordinatewise to complex sequences that need not lie in $\ell^2(\mathbb Z)$. Define $\Gamma(t) = 2 - (t + t^{-1})$, and fix $z \in \mathbb C\setminus\{0\}$. Performing matrix multiplication with the sequence $v(z)=(z^i)_{i \in \mathbb Z}$, we see that

\begin{align*}
-\Delta v(z)&= \begin{psmallmatrix} 
    \ddots & ~ & ~ & ~ & ~ & ~ & ~\\
    ~ &2 & -1 & ~ & ~ & ~ & ~\\
    ~ & -1 & 2& -1 & ~ & ~ & ~\\
    \cdots & ~ & -1 & 2 & -1 & ~ & \cdots\\
    ~ & ~ & ~ & -1 & 2 & -1 & ~\\
    ~ & ~ & ~ & ~ & -1 & 2 & ~ \\
    ~ & ~ & ~ & ~& ~ & ~ & \ddots
    \end{psmallmatrix} \begin{psmallmatrix} \vdots \\ z^{-2} \\ z^{-1} \\z^0 \\ z^1 \\ z^2 \\ \vdots
    \end{psmallmatrix}\\
    &= \begin{psmallmatrix} \vdots \\ 2z^{-2} - (z^{-3} + z^{-1}) \\ 2z^{-1} - (z^{-2} + z^{0}) \\ 2z^{0} - (z^{-1} + z^{1}) \\ 2z^{1} - (z^{0} + z^{2}) \\ 2z^{2} - (z^{1} + z^{3}) \\ \vdots\end{psmallmatrix} 
    = \Gamma(z) \begin{psmallmatrix} \vdots \\ z^{-2} \\ z^{-1} \\z^0 \\ z^1 \\ z^2 \\ \vdots
    \end{psmallmatrix} = \Gamma(z) v(z)
\end{align*}
For $z^i$ to be bounded in both $i\gg 0$ and $i \ll 0$ it must be that $|z|=1$ i.e. $z \in \mathbb S^1$, and any such choice of $z$ gives an eigenvector in $\ell^{\infty}(\mathbb Z)$ but not in $\ell^2(\mathbb Z)$. So regarding bounded eigenvectors, the spectrum of $-\Delta$ is $\Gamma(\mathbb S^1)= [0, 4]$. Therefore in the discrete case, when $V(n)\to\infty$ this term will dominate the eigenvalues of $E$, which are $O(n^2)$ and our discrete $G$ is trace-class.

While the eigenvectors are in $\ell^\infty$ not $\ell^2$, when $-\Delta$ is restricted to $\ell^2$ the spectrum is still $\Gamma(\mathbb S^1)$. This is because the $\ell^\infty$ eigenvectors can be approximated by sequences of $\ell^2$ vectors, similar to the approach of Kressner et al.\cite{KRESSNER2026259}. Sketch functionals, however, are perfectly equipped to treat these directly. 

There is a simple centred and isotropic sketch which is supported on these eigenvectors; let $\Theta \sim U([0, 2 \pi]), \xi \sim U(\{-1,1\})$, and define $\ess(x) = \xi \sum_{n \in \mathbb  Z}x_n e^{in\Theta}$. A more intuitive interpretation is of $\mathbf{s} = \left(\xi e^{in\Theta}\right)_{n\in\mathbb{Z}}$ i.e.~sketch vectors that lie in $\ell^\infty$, but not within the Hilbert space $\ell^2$. This is an interesting case: the whole of $\ess$ is determined by the one-dimensional random variable $\Theta$ (plus a sign flip to make the sketch centred), yet it results in an infinite-dimensional isotropic random sketch supported on eigenvectors of $-\Delta$. Note that we have not explicitly discussed complex sketches like this in this paper (see Remark \ref{rem:complex}).

Despite its isotropy, this is a poor choice of distribution for randomised low-rank approximation. The amount of progress made by the approximation in the $\ell+1$th step depends on how linearly dependent $\ess_{\ell+1}$ is from the preceding sketch vectors $\ess_i, i \le \ell$. If two sketch vectors are highly linearly dependent, intuitively not much information is gained by sampling both. This is quantified by the $\Ess_1^\dagger$ term in the structural bound, as highly linearly dependent sketch vectors would result in low singular values of $\Ess_1$. In this case over many trials one is certain to sample $\theta_1, \theta_2$ close together, resulting in high linear dependence. 

Theoretically, $\ess$ is also about as poorly behaved as an isotropic sketch can be. Let $x \in \ell^2(\mathbb Z)$ and define $f_x \in L^2( [0, 2\pi) )$ by $f_x(\theta)= \sum_{n \in \mathbb Z} x_n e^{i n \theta}$, a well-defined function in $L^2$ by Fourier series with $\|f_x\|^2_{L^2} = 2\pi \|x\|^2_{\ell^2}$. Then $\ess(x) = \xi f_x(\Theta)$ pointwise. Note that by varying our choice of $x$, we can achieve any $f_x \in L^2$. We can thus guarantee that $\ess(x)$ has finite second moment:
\begin{equation}
    \mathbb E[|\ess(x)|^2]= \mathbb E[|f_x(\Theta)|^2] =  \int_0^{2\pi}|f_x (\theta)|^2 \frac{\mathrm d\theta}{2\pi} = \frac{1}{2\pi} \|f_x\|_{L^2}^2  = \|x\|^2_{\ell^2}\text{.}
\end{equation}
This also shows that $\ess$ is isotropic. However, we could easily have chosen some $f_x \in L^2([0, 2\pi) )$ but $f_x \not \in L^3([0, 2\pi) )$, and in this case we would have $\mathbb E[|\ess(x)|^3] = \mathbb E[|f_x(\Theta)|^3] = \infty$. So $\ess(x)$ is only guaranteed to have a finite second moment. It is interesting to compare this to the independent Rademacher case, whose components are also guaranteed to have magnitude $1$ but the stronger assumption of their independence both solves the linear dependence issue and gives strong, uniform guarantees on the distribution of $\ess(x)$ for all $x$. Certainly, none of our theoretical bounds apply, but this ``low-entropy sketch'' still serves as an interesting reference point for the worst-case scenario.

\begin{figure}[htbp]
    \centering
    \includegraphics[width=0.4\linewidth]{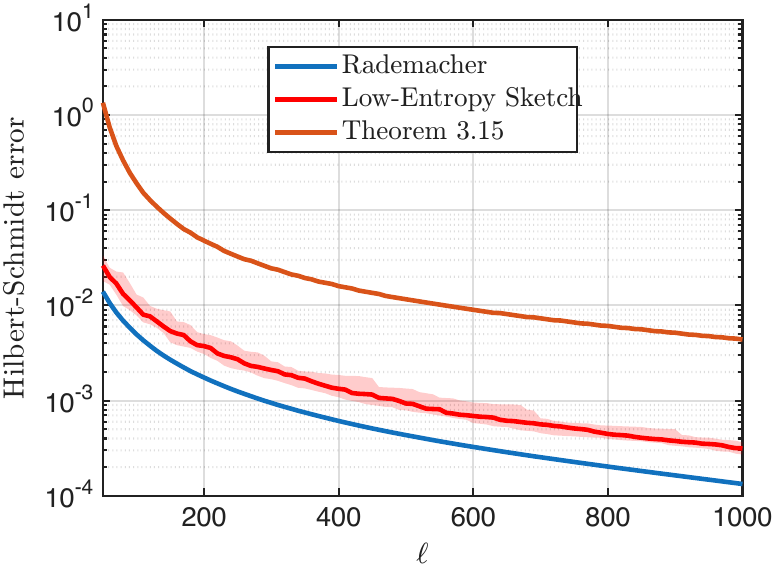}
    \includegraphics[width=0.4\linewidth]{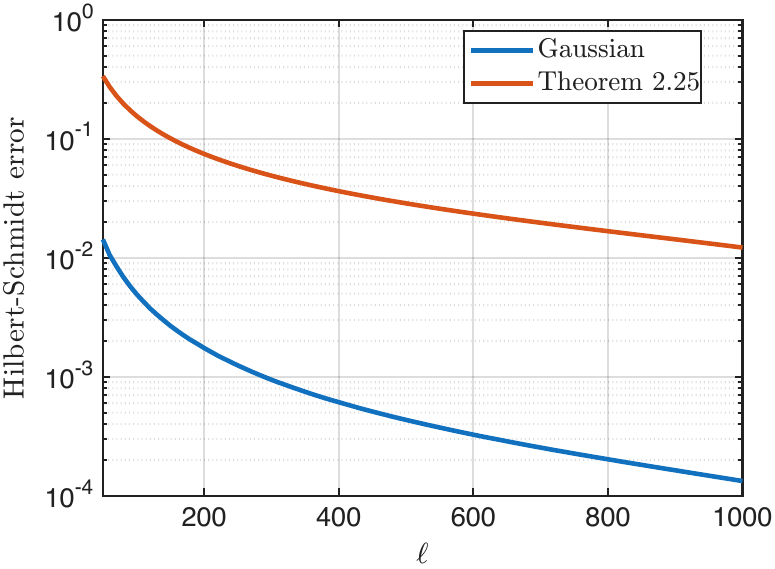}
    \caption{Hilbert--Schmidt error of the randomised SVD for the discrete $G$ using the isotropic Rademacher, isotropic Gaussian and low-entropy sketch. Rademacher and the low-entropy sketch are comparable in that they are isotropic and their components have magnitude 1, the latter performs significantly worse. While Theorem~\ref{theorem:subgaussian_maintheorem} is not applicable to the low-entropy sketch, it still appears to match the shape of the true median error.}
    \label{fig:rad_vs_eig}
\end{figure}

In Figure~\ref{fig:rad_vs_eig}, we plot the Hilbert--Schmidt error when the randomised SVD is used for $G$ with the low-entropy, isotropic Rademacher, and isotropic Gaussian sketches. The Gaussian and Rademacher distributions consistently perform indistinguishably well. Contrastingly, the variance of the error when the low-entropy sketch is used is much higher due to its dependence on a relatively small number of independent random variables. We have plotted the median with error bands here, but over one trial the error of the low-entropy sketch would plateau with increasing frequency, as significant progress is only made when the sampled $\Theta_k$ is not close to any previous $\Theta_i$. For this reason, notice the large error bands, which are almost invisible on other plots in this paper.

\section{Conclusions}

Our primary contribution is the application of cylindrical random variables~\cite{vakhania1987probability} to infinite-dimensional randomised NLA through the language of sketch functionals. With this tool, we have been able to unite and generalise previous results by Boull\'e--Townsend~\cite{boulle2021generalization} and Kressner et al.~\cite{KRESSNER2026259}, to analyse the randomised SVD for sketches with bounded covariance operators rather than being restricted to trace-class or the identity. Further, we have applied the results of Webb--Lynch-Davies \cite{webb2026sharp} to infinite dimensions yielding the first results on the infinite-dimensional randomised SVD with sub-Gaussian sketch. We argue all of this is evidence that sketch functionals are a natural framework for reasoning about infinite-dimensional random sketches. We have only given results on separable Hilbert spaces in this paper, but cylindrical random variables generalise to other Banach spaces and could have wider applications.

Such a notion certainly has the potential to broaden the horizon of infinite-dimensional randomised NLA; the first step of many randomised algorithms is to take the product of the target operator with a (usually isotropic) sketch matrix. Possible future applications of sketch functionals include lifting entirely different algorithms to infinite dimensions, for example randomised least squares~\cite{drineas2011faster}\cite{avron2010blendenpik}\cite{rokhlin2008fast} and randomised trace estimation~\cite{epperly2024xtrace}.

Our results share the limitation of Saibaba--Mi{\k{e}}dlar~\cite{saibaba2025randomized} that in the large $\ell$ limit, our bounds give $\|A - \hat A\|^2 \le 2\| \Sigma_\perp\|^2$. Comparable Gaussian results have the factor of $2$ tending instead to $1$; this difference is possibly due to the use of a conditional probability argument for the Gaussian case, but a crude submultiplicativity bound in the sub-Gaussian case because $\Ess_1$ and $\Ess_2$ are not necessarily independent. Perhaps more realistic tail bounds are possible with a different method of proof. For example, following Boutsidis~\cite{boutsidis2013} and writing $\Sigma_\perp \Ess_2 \Ess_1^\dagger = (\Sigma_\perp \Ess_2 \Ess_1^T)(\Ess_1 \Ess_1^T)^{-1}$ to treat cross-cancellation between $\Ess_2$ and $\Ess_1$ seems promising.

The theory in this paper has been focussed on the case where each sketch functional is independent from the others. In practice, this is not always the case: a particularly clever finite-dimensional sketch is the subsampled randomised Fourier transform (SRFT). It is a choice of sketch matrix $\Omega\in \mathbb R^{n \times \ell}$ which allows $Y=A\Omega$ to be computed highly efficiently, in $O(mn \log \ell)$ time for $A \in \mathbb R^{m \times n}$ as opposed to $O(m n \ell)$. It is not immediately clear what the infinite-dimensional analogue would be, but its columns could likely be captured by sketch functionals.

\section*{Tool and Computational Resource Disclosure}

Following the Leiden Declaration on Artificial Intelligence and Mathematics, we disclose that Claude (Sonnet 5, Anthropic) and Codex (GPT-6, OpenAI) was used for copyediting, and referee-style reports were produced by Claude and Codex. A number of suggestions were adopted including correcting typos and notation errors, fixing mathematical mistakes and pointing us to relevant literature. Codex was used to produce an initial draft of the proof of Lemma \ref{lem:sub-exp bound}. Opus 5.5 was also used to modify MATLAB code for figures in Section~\ref{section:numerical_exp}. All suggestions and contributions were verified by the authors, who take full responsibility for the content.

\bibliographystyle{plain}
\bibliography{subgaussian} 

\end{document}